\documentclass{amsart}
\usepackage[all]{xy}
\usepackage{verbatim}
\usepackage{color}
\usepackage{amsthm}
\usepackage{amssymb}
\usepackage[colorlinks=true]{hyperref}
\usepackage{footmisc}
\usepackage{stmaryrd}




\setcounter{equation}{0}

\numberwithin{equation}{section}

\newtheorem{theorem}[equation]{Theorem}
\newtheorem*{theorem*}{Theorem} \newtheorem{lemma}[equation]{Lemma}

\newtheorem*{conjecture*}{Mamma Conjecture}
\newtheorem*{conjecture1*}{Mamma Conjecture (revisited)}
\newtheorem{proposition}[equation]{Proposition}
\newtheorem{corollary}[equation]{Corollary}
\newtheorem*{corollary*}{Corollary}

\theoremstyle{remark}
\newtheorem{definition}[equation]{Definition}

\newtheorem{example}[equation]{Example}

\newtheorem{notation}[equation]{Notation}

\theoremstyle{remark}
\newtheorem{remark}[equation]{Remark}

\setcounter{tocdepth}{1}

\newcommand{\cA}{{\mathcal A}}
\newcommand{\cB}{{\mathcal B}}
\newcommand{\cC}{{\mathcal C}}
\newcommand{\cD}{{\mathcal D}}

\newcommand{\cL}{{\mathcal L}}

\newcommand{\cN}{{\mathcal N}}
\newcommand{\cO}{{\mathcal O}}

\newcommand{\cT}{{\mathcal T}}

\newcommand{\cZ}{{\mathcal Z}}


\newcommand{\bbC}{\mathbb{C}}

\newcommand{\bbF}{\mathbb{F}}
\newcommand{\bbG}{\mathbb{G}}

\newcommand{\bbP}{\mathbb{P}}

\newcommand{\bbQ}{\mathbb{Q}}
\newcommand{\bbZ}{\mathbb{Z}}


\DeclareMathOperator{\id}{id}

\DeclareMathOperator{\NChow}{NChow} 
\DeclareMathOperator{\NHom}{NHom} 
\DeclareMathOperator{\NNum}{NNum} 

\DeclareMathOperator{\Num}{Num} 



\newcommand{\dgcat}{\mathrm{dgcat}} 


\newcommand{\perf}{\mathrm{perf}}

\newcommand{\Chow}{\mathrm{Chow}}

\newcommand{\dg}{\mathrm{dg}}

\newcommand{\Hom}{\mathrm{Hom}}
\newcommand{\End}{\mathrm{End}}

\newcommand{\rep}{\mathrm{rep}}

\newcommand{\dgHo}{\mathrm{H}^0}

\newcommand{\Hmo}{\mathrm{Hmo}}
\newcommand{\op}{\mathrm{op}}

\newcommand{\too}{\longrightarrow}


\newcommand{\ie}{\textsl{i.e.,}\ }

\let\oldmarginpar\marginpar
\def\marginpar#1{\oldmarginpar{\tiny #1}}

\begin{document}

\title[NC motives in positive characteristic and their applications]{Noncommutative motives in positive characteristic and their applications}
\author{Gon{\c c}alo~Tabuada}
\address{Gon{\c c}alo Tabuada, Department of Mathematics, MIT, Cambridge, MA 02139, USA}
\email{tabuada@math.mit.edu}
\urladdr{http://math.mit.edu/~tabuada}
\thanks{The author was partially supported by a NSF CAREER Award}

\subjclass[2010]{11M36, 11M38, 14A22, 14C15, 14F30, 19D55}
\date{\today}
\keywords{Motives, topological periodic cyclic homology, crystalline cohomology, zeta function, $L$-function, Weil conjectures, regularized determinant, numerical Grothendieck group, motivic Galois (super-)groups, motivic measures, noncommutative algebraic geometry}
\abstract{Let $k$ be a base field of positive characteristic. Making use of topological periodic cyclic homology, we start by proving that the category of noncommutative numerical motives over $k$ is abelian semi-simple, as conjectured by Kontsevich. Then, we establish a far-reaching noncommutative generalization of the Weil conjectures, originally proved by Dwork and Grothendieck. In the same vein, we establish a far-reaching noncommutative generalization of the cohomological interpretations of the Hasse-Weil zeta function, originally proven by Hesselholt. As a third main result, we prove that the numerical Grothendieck group of every smooth proper dg category is a finitely generated free abelian group, as claimed (without proof) by Kuznetsov. Then, we introduce the noncommutative motivic Galois (super-)groups and, following an insight of Kontsevich, relate them to their classical commutative counterparts. Finally, we explain how the motivic measure induced by Berthelot's rigid cohomology can be recovered from the theory of noncommutative motives.}}
\maketitle
\tableofcontents



\section{Preliminaries}
Throughout the article $k$ denotes a base field of positive characteristic $p>0$. When $k$ is perfect, we will write $W(k)$ for the ring of $p$-typical Witt vectors of $k$ and $K:=W(k)_{1/p}$ for the fraction field of $W(k)$.
\subsection{Dg categories}\label{sub:dg}
Let $\cC(k)$ the category of complexes of $k$-vector spaces. A {\em differential
  graded (=dg) category $\cA$} is a category enriched over $\cC(k)$ and a {\em dg functor} $F\colon\cA\to \cB$ is a functor enriched over
$\cC(k)$; consult Keller's survey
\cite{ICM-Keller}. In what follows, we will write $\dgcat(k)$ for the category of small dg categories.

Every (dg) $k$-algebra $A$ gives naturally rise to a dg category with a single object. Another source of examples is proved by $k$-schemes (or, more generally, by algebraic $k$-stacks) since the category of perfect complexes $\perf(X)$ of every $k$-scheme $X$ admits a canonical dg enhancement\footnote{When $X$ is quasi-projective this dg enhancement is unique; see Lunts-Orlov \cite[Thm. 2.12]{LO}.} $\perf_\dg(X)$; consult \cite[\S4.6]{ICM-Keller}.

Let $\cA$ be a dg category. The opposite dg category $\cA^\op$, resp. category $\dgHo(\cA)$, has the same objects as $\cA$ and $\cA^\op(x,y):=\cA(y,x)$, resp. $\dgHo(\cA)(x,y):=H^0\cA(x,y)$. A {\em right dg
  $\cA$-module} is a dg functor $M\colon \cA^\op \to \cC_\dg(k)$ with values
in the dg category $\cC_\dg(k)$ of complexes of $k$-vector spaces. Let
us write $\cC(\cA)$ for the category of right dg
$\cA$-modules. Following \cite[\S3.2]{ICM-Keller}, the derived
category $\cD(\cA)$ of $\cA$ is defined as the localization of
$\cC(\cA)$ with respect to the objectwise quasi-isomorphisms. In what follows, we will write
$\cD_c(\cA)$ for the triangulated subcategory of compact objects.

A dg functor $F\colon\cA\to \cB$ is called a {\em Morita equivalence} if it induces an equivalence between derived categories $\cD(\cA) \simeq
\cD(\cB)$; see \cite[\S4.6]{ICM-Keller}. As explained in
\cite[\S1.6]{book}, the category $\dgcat(k)$ admits a Quillen model
structure whose weak equivalences are the Morita equivalences. Let $\Hmo(k)$ be the associated homotopy category.

The {\em tensor product $\cA\otimes\cB$} of dg categories is defined
as follows: the set of objects is the cartesian product and
$(\cA\otimes\cB)((x,w),(y,z)):= \cA(x,y) \otimes\cB(w,z)$. As
explained in \cite[\S2.3]{ICM-Keller}, this construction gives rise to
a symmetric monoidal structure $-\otimes -$ on $\dgcat(k)$, which descends to the homotopy category
$\Hmo(k)$. 

A {\em dg $\cA\text{-}\cB$-bimodule} is a dg functor
$\mathrm{B}\colon \cA\otimes \cB^\op \to \cC_\dg(k)$ or, equivalently, a
right dg $(\cA^\op \otimes \cB)$-module. A standard example is the dg
$\cA\text{-}\cB$-bimodule
\begin{eqnarray}\label{eq:bimodule2}
{}_F\mathrm{B}\colon\cA\otimes \cB^\op \too \cC_\dg(k) && (x,z) \mapsto \cB(z,F(x))
\end{eqnarray}
associated to a dg functor $F\colon\cA\to \cB$. Let us write $\rep(\cA,\cB)$ for the full triangulated subcategory of $\cD(\cA^\op \otimes \cB)$ consisting of those dg $\cA\text{-}\cB$-modules $\mathrm{B}$ such that for every object $x \in \cA$ the associated right dg $\cB$-module $\mathrm{B}(x,-)$ belongs to $\cD_c(\cB)$.

Finally, recall from Kontsevich \cite{Miami,finMot,IAS} that a dg category $\cA$ is called {\em smooth} if the dg $\cA\text{-}\cA$-bimodule ${}_{\id}\mathrm{B}$ belongs to the subcategory $\cD_c(\cA^\op\otimes \cA)$ and {\em proper} if $\sum_i \mathrm{dim}\, H^i\cA(x,y)< \infty$ for any ordered pair of objects $(x,y)$. Examples include the finite-dimensional $k$-algebras of finite global dimension $A$ (when $k$ is perfect) as well as the dg categories $\perf_\dg(X)$ associated to smooth proper $k$-schemes $X$ (or, more generally, to smooth proper algebraic $k$-stacks). As proved in \cite[Thm.~1.43]{book}, the smooth proper dg categories can be (conceptually) characterized as the dualizable objects of the symmetric monoidal category $\Hmo(k)$. Moreover, the dual of a smooth proper dg category $\cA$ is its opposite dg category $\cA^\op$.
\subsection{Noncommutative motives}\label{sec:NCmotives}
For a book, resp. survey, on noncommutative motives, we invite the reader to consult \cite{book}, resp. \cite{survey}.

As explained in \cite[\S1.6.3]{book}, given dg categories $\cA$ and $\cB$, there is a natural bijection between $\Hom_{\Hmo(k)}(\cA,\cB)$ and the set of isomorphism classes of the category $\rep(\cA,\cB)$. Moreover, under this bijection, the composition law of $\Hmo(k)$ corresponds to the derived tensor product of bimodules. Since the dg $\cA\text{-}\cB$ bimodules \eqref{eq:bimodule2} belong to $\rep(\cA,\cB)$, we hence have the following symmetric monoidal functor:
\begin{eqnarray}\label{eq:func1}
\dgcat(k) \too \Hmo(k) & \cA\mapsto \cA & F \mapsto {}_F \mathrm{B}\,.
\end{eqnarray}
The {\em additivization} of $\Hmo(k)$ is the additive category $\Hmo_0(k)$ with the same objects as $\Hmo(k)$ and with morphisms given by $\Hom_{\Hmo_0(k)}(\cA,\cB):=K_0\rep(\cA,\cB)$, where $K_0\rep(\cA,\cB)$ stands for the Grothendieck group of the triangulated category $\rep(\cA,\cB)$. The composition law and the symmetric monoidal structure are induced from $\Hmo(k)$. Consequently, we have the following symmetric monoidal functor:
\begin{eqnarray}\label{eq:func2}
\Hmo(k) \too \Hmo_0(k) & \cA \mapsto \cA & \mathrm{B} \mapsto [\mathrm{B}]\,.
\end{eqnarray}
Given a commutative ring of coefficients $R$, the {\em $R$-linearization} of $\Hmo_0(k)$ is the $R$-linear category $\Hmo_0(k)_R$ obtained by tensoring the morphisms of $\Hmo_0(k)$ with $R$. By construction, we have the symmetric monoidal functor:
\begin{eqnarray}\label{eq:func3}
\Hmo_0(k) \too \Hmo_0(k)_R & \cA \mapsto \cA & [\mathrm{B}] \mapsto [\mathrm{B}]_R\,.
\end{eqnarray}
Let us write $U(-)_R\colon \dgcat(k) \to \Hmo_0(k)_R$ for the composition $\eqref{eq:func3}\circ \eqref{eq:func2}\circ \eqref{eq:func1}$.
\subsubsection{Noncommutative Chow motives}\label{sub:Chow}
The category of {\em noncommutative Chow motives} $\NChow(k)_R$ is defined as the idempotent completion of the full subcategory of $\Hmo_0(k)_R$ consisting of the objects $U(\cA)_R$ with $\cA$ a smooth proper dg category. By construction, this category is $R$-linear, additive, idempotent complete, and symmetric monoidal. Since the smooth proper dg categories are the dualizable objects of the symmetric monoidal category $\Hmo(k)$, the category $\NChow(k)_R$ is moreover {\em rigid}, \ie all its objects are dualizable. Let us denote by $(-)^\vee$ the associated (contravariant) duality functor. Given dg categories $\cA$ and $\cB$, with $\cA$ smooth proper, we have an equivalence of categories $\rep(\cA,\cB) \simeq \cD_c(\cA^\op \otimes \cB)$; see \cite[Cor.~1.44]{book}. Consequently, since $K_0(\cA^\op \otimes \cB):=K_0(\cD_c(\cA^\op \otimes \cB))$, we obtain the isomorphisms:
\begin{equation}\label{eq:Homs}
\Hom_{\NChow(k)_R}(U(\cA)_R,U(\cB)_R):=K_0(\rep(\cA,\cB))_R\simeq K_0(\cA^\op \otimes\cB)_R\,.
\end{equation}
When $R=\bbZ$, we will simply write $\NChow(k)$ instead of $\NChow(k)_\bbZ$.
\subsubsection{Noncommutative numerical motives}\label{sub:numerical}
Given an $R$-linear, additive, rigid symmetric monoidal category $(\mathrm{D}, \otimes, {\bf 1})$, its {\em $\cN$-ideal} is defined as follows
$$ \cN(a,b):=\{f \in \Hom_{\mathrm{D}}(a,b)\,|\, \forall g \in \Hom_{\mathrm{D}}(b,a)\,\,\mathrm{we}\,\,\mathrm{have}\,\,\mathrm{tr}(g\circ f)=0 \}\,,$$
where $\mathrm{tr}(g\circ f)$ stands for the categorical trace of $g\circ f$. The~category of {\em noncommutative numerical motives} $\NNum(k)_R$ is defined\footnote{Consult Remark \ref{rk:Kontsevich} below for an alternative definition of the category $\NNum(k)_R$.} as the idempotent completion of the quotient of $\NChow(k)_R$ by the $\otimes$-ideal $\cN$. By construction, $\NNum(k)_R$ is $R$-linear, additive, idempotent complete, and rigid symmetric~monoidal. 

\subsection{Orbit categories}\label{sub:orbit}
Let $(\mathrm{D}, \otimes, {\bf 1})$ be a $R$-linear, additive, rigid symmetric monoidal category equipped with a $\otimes$-invertible object $\cO \in \mathrm{D}$. The {\em orbit category} $\mathrm{D}/_{\!\!-\otimes \cO}$ has the same objects as $\mathrm{D}$ and $\Hom_{\mathrm{D}/_{\!\!-\otimes \cO}}(a,b):=\bigoplus_{n \in \bbZ} \Hom_{\mathrm{D}}(a,b\otimes \cO^{\otimes n})$. 
Given objects $a$, $b$, and $c$, and morphisms
\begin{eqnarray*}
\{f_n\}_{n \in \bbZ} \in \bigoplus_{n \in \bbZ} \Hom_{\mathrm{D}}(a,b\otimes \cO^{\otimes n}) && \{g_n\}_{n \in \bbZ} \in \bigoplus_{n \in \bbZ} \Hom_{\mathrm{D}}(b,c\otimes \cO^{\otimes n})\,,
\end{eqnarray*}
the $i^{\mathrm{th}}$ component of $\{g_n\} \circ \{f_n\}$ is defined as $\sum_n ((g_{i-n}\otimes \cO^{\otimes n})\circ f_n)$. The functor
\begin{eqnarray}\label{eq:functor}
\gamma\colon \mathrm{D} \too \mathrm{D}/_{\!\!-\otimes \cO} & a \mapsto a & f \mapsto \{f_n\}_{n \in \bbZ}\,,
\end{eqnarray} 
where $f_0=f$ and $f_n=0$ if $n\neq 0$, is endowed with an isomorphism $\gamma \circ (-\otimes \cO) \Rightarrow \gamma$ and is $2$-universal among all such functors. By construction, the category $\mathrm{D}/_{\!\!-\otimes \cO}$ is $R$-linear and additive. Moreover, it inherits naturally from $\mathrm{D}$ a (rigid) symmetric monoidal structure making the above functor \eqref{eq:functor} symmetric monoidal.
\section{Topological periodic cyclic homology}
In this section $F$ denotes a field of coefficients of characteristic zero. Moreover, we assume that $k$ is perfect. Thanks to the work of Hesselholt \cite[\S4]{Hesselholt}, topological periodic cyclic homology $TP$, which is defined as the Tate cohomology of the circle group action on topological Hochschild homology $THH$, yields a lax symmetric monoidal functor 
\begin{equation}\label{eq:TP}
TP_\pm(-)_{1/p} \colon \dgcat(k) \too \mathrm{Vect}_{\bbZ/2}(K)
\end{equation} 
with values in the category of $\bbZ/2$-graded $K$-vector spaces. Moreover, thanks to the work of Scholze (see \cite[Thm.~5.2]{positiveCD}), given a smooth proper $k$-scheme $X$, we have a natural isomorphism of (finite-dimensional) $\bbZ/2$-graded $K$-vector spaces 
\begin{equation}\label{eq:Scholze}
TP_\pm(\perf_\dg(X))_{1/p}\simeq (\bigoplus_{i\, \mathrm{even}}H^i_{\mathrm{crys}}(X), \bigoplus_{i\, \mathrm{odd}}H^i_{\mathrm{crys}}(X))\,,
\end{equation}
where $H^\ast_{\mathrm{crys}}(X):=H^\ast_{\mathrm{crys}}(X/W(k))\otimes_{W(k)}K$ stands for crystalline cohomology.
\begin{theorem}\label{thm:TP}
When $F\subseteq K$, resp. $K \subseteq F$, the lax symmetric monoidal functor \eqref{eq:TP} gives rise to a $F$-linear symmetric monoidal functor
\begin{equation}\label{eq:TP1}
TP_\pm(-)_{1/p}\colon \NChow(k)_F \too \mathrm{vect}_{\bbZ/2}(K)
\end{equation}
with values in the category of finite-dimensional $\bbZ/2$-graded $K$-vector spaces, resp. to a $F$-linear symmetric monoidal functor
\begin{equation}\label{eq:TP11}
TP_\pm(-)_{1/p}\otimes_K F\colon \NChow(k)_F \too \mathrm{vect}_{\bbZ/2}(F)
\end{equation}
with values in the category of finite-dimensional $\bbZ/2$-graded $F$-vector spaces.
\end{theorem}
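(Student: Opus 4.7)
My proof would split into three stages: (i) descend the lax symmetric monoidal functor \eqref{eq:TP} through the chain $\dgcat(k) \to \Hmo(k) \to \Hmo_0(k) \to \Hmo_0(k)_F$ of \S\ref{sec:NCmotives}, (ii) restrict to smooth proper dg categories and establish finite-dimensionality, and (iii) upgrade the lax monoidal structure to a strong one.

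For the descent, I would invoke the universal property of $U(-)_F\colon\dgcat(k)\to\Hmo_0(k)_F$ as the universal additive invariant: any functor from $\dgcat(k)$ that is Morita invariant and sends short exact sequences (equivalently, semi-orthogonal decompositions) of dg categories to direct sums factors uniquely through $\Hmo_0(k)$. Both properties are known to hold for $TP_\pm(-)_{1/p}$: Morita invariance is standard, and additivity descends from the analogous property of $THH$ via the Tate construction. When $F\subseteq K$, every $K$-linear map is automatically $F$-linear, so the resulting functor $\Hmo_0(k)\to\mathrm{Vect}_{\bbZ/2}(K)$ extends $F$-linearly to $\Hmo_0(k)_F$; when $K\subseteq F$, one first post-composes with $-\otimes_K F$ and then extends $F$-linearly. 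Restricting to the full subcategory on the $U(\cA)_F$ with $\cA$ smooth proper and passing to the idempotent completion then produces candidate functors out of $\NChow(k)_F$.

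For finite-dimensionality, I would use the fact recalled in \S\ref{sub:dg} that every smooth proper dg category $\cA$ is dualizable in $\Hmo(k)$ with dual $\cA^\op$, so that $U(\cA)_F$ is dualizable in $\NChow(k)_F$. Since \eqref{eq:TP} is lax symmetric monoidal, it sends dualizable objects of its source to dualizable objects of its target, with the lax constraints appearing in the unit and counit automatically invertible. A dualizable object of $\mathrm{Vect}_{\bbZ/2}(K)$ is necessarily finite-dimensional, which yields \eqref{eq:TP1}, and the analogous argument after scalar extension gives \eqref{eq:TP11}. The isomorphism \eqref{eq:Scholze} provides a sanity check in the scheme case. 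Strong symmetric monoidality on smooth proper dg categories is then automatic: once both sides of the K\"unneth-type lax structure map are strongly dualizable, any lax symmetric monoidal functor between rigid symmetric monoidal categories is necessarily strong.

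The hardest step, to my eye, is the additivity property of $TP_\pm(-)_{1/p}$ used in the first stage. Although lax monoidality and Morita invariance are essentially formal consequences of Hesselholt's construction, checking that $TP$ is additive on the level of dg categories in a way compatible with the universal property of $U(-)_F$ ultimately hinges on the careful point-set analysis in \cite[\S4]{Hesselholt} of the Tate construction and its compatibility with localization sequences. The remaining steps, by contrast, are purely formal consequences of rigidity of $\NChow(k)_F$ and dualizability inside $\mathrm{Vect}_{\bbZ/2}(K)$.
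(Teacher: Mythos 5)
Your first stage (descent through the universal additive invariant $U(-)_F$) matches the paper's argument, but stages (ii) and (iii) both rest on a false general principle, and the genuinely hard input of the proof is absent. A lax symmetric monoidal functor does \emph{not} send dualizable objects to dualizable objects, and a lax symmetric monoidal functor out of a rigid category is \emph{not} automatically strong: the lax structure maps $F(a)\otimes F(b)\to F(a\otimes b)$ need not be invertible even when $a$, $b$ are dualizable, and the "lax constraints in the unit and counit" are exactly what has to be proven invertible, not something that comes for free. A counterexample inside this very framework is $K_0(-)_{\bbQ}\simeq \Hom_{\NChow(k)_\bbQ}(U(k)_\bbQ,-)$: it is a lax symmetric monoidal additive invariant on the rigid category $\NChow(k)_\bbQ$, yet it is not strong monoidal (already $K_0(X\times Y)_\bbQ\neq K_0(X)_\bbQ\otimes K_0(Y)_\bbQ$ for products of curves). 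So neither finite-dimensionality nor strong monoidality follows from rigidity by formal nonsense.

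The missing ingredient is the Blumberg--Mandell strong K\"unneth theorem \cite[Thm.~A]{BM}: for smooth proper $\cA,\cB$ the map $TP(\cA)\wedge_{TP(k)}TP(\cB)\to TP(\cA\otimes\cB)$ is an equivalence. The paper combines this with the (convergent) Tor spectral sequence over $TP_\ast(k)_{1/p}\simeq \mathrm{Sym}_K\{v^{\pm 1}\}$ --- a $2$-periodic graded field, so the spectral sequence collapses --- to obtain the K\"unneth isomorphism $TP_\pm(\cA)_{1/p}\otimes TP_\pm(\cB)_{1/p}\simeq TP_\pm(\cA\otimes\cB)_{1/p}$, which is what makes \eqref{eq:TP1} strong symmetric monoidal on $\NChow(k)_F$. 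Only \emph{after} that does the rigidity argument run, and in the opposite order from yours: a \emph{strong} symmetric monoidal functor preserves dualizable objects, and the dualizable objects of $\mathrm{Vect}_{\bbZ/2}(K)$ are the finite-dimensional ones, whence the functor lands in $\mathrm{vect}_{\bbZ/2}(K)$. You also misidentify the hardest step: additivity of $TP_\pm(-)_{1/p}$ is the easy part (additivity of $THH$ plus exactness of the Tate construction), while the deep, non-formal input is the K\"unneth equivalence.
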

\begin{proof}
Recall from \cite{BO1} that a {\em semi-orthogonal decomposition} of a triangulated category $\cT$, denoted by $\cT=\langle \cT_1, \cT_2\rangle$, consists of full triangulated subcategories $\cT_1, \cT_2 \subseteq \cT$ satisfying the following conditions: the inclusions $\cT_1, \cT_2 \subseteq \cT$ admit left and right adjoints; the triangulated category $\cT$ is generated by the objects of $\cT_1$ and $\cT_2$; and $\Hom_\cT(\cT_2, \cT_1)=0$. A functor $E\colon \dgcat(k)
\to \mathrm{D}$, with values in an additive category, is called an
{\em additive invariant} if it satisfies the following conditions:
\begin{itemize}
\item[(i)] It sends the Morita equivalences (see \S\ref{sub:dg}) to isomorphisms.
\item[(ii)] Given dg categories $\cA,\cC \subseteq \cB$ such that $\dgHo(\cB)=\langle\dgHo(\cA), \dgHo(\cC) \rangle$, the inclusions $\cA, \cC\subseteq \cB$ induce an isomorphism $E(\cA) \oplus E(\cC) \simeq E(\cB)$.
\end{itemize}
As explained in \cite[\S2.3]{book}, the functor $U(-)_R\colon \dgcat(k) \to \Hmo_0(k)_R$ is the {\em universal} additive invariant, \ie given any $R$-linear, additive, symmetric monoidal category $\mathrm{D}$, we have an induced equivalence of categories
\begin{equation}\label{eq:equivalence}
U(-)_R^\ast \colon \mathrm{Fun}^\otimes_{R\text{-}\mathrm{linear}}(\Hmo_0(k)_R, \mathrm{D}) \stackrel{\simeq}{\too} \mathrm{Fun}^\otimes_{\mathrm{add}}(\dgcat(k), \mathrm{D})\,,
\end{equation}
where the left-hand side denotes the category of $R$-linear lax symmetric monoidal functors and the right-hand side denotes the category of lax symmetric monoidal additive invariants. Since topological Hochschild homology $THH$ is a lax symmetric monoidal additive invariant (consult \cite[\S2.2.12]{book}), it follows from the exactness of the Tate construction that the above functor \eqref{eq:TP} is also a lax symmetric monoidal additive invariant. Consequently, making use of the field extension $F \subseteq K$, resp. $K \subseteq F$, and of the equivalence of categories \eqref{eq:equivalence}, we conclude that the functor \eqref{eq:TP}, resp. $TP_\pm(-)_{1/p}\otimes_k F$, gives rise to a $F$-linear lax symmetric monoidal functor 
\begin{equation}\label{eq:TP2}
TP_\pm(-)_{1/p} \colon \Hmo_0(k)_F \too \mathrm{Vect}_{\bbZ/2}(K)\,,
\end{equation}
resp. to a $F$-linear lax symmetric monoidal functor
\begin{equation}\label{eq:TP22}
TP_\pm(-)_{1/p} \otimes_K F \colon \Hmo_0(k)_F \too \mathrm{Vect}_{\bbZ/2}(F)\,.
\end{equation}
Given smooth proper dg categories $\cA$ and $\cB$, it is proved in \cite[Thm.~A]{BM} that the natural morphism of $TP(k)$-modules 
$$TP(\cA) \wedge_{TP(k)}TP(\cB) \too TP(\cA\otimes \cB)$$
is a weak equivalence. Consequently, the induced morphism of $TP(k)_{1/p}$-modules
\begin{equation}\label{eq:TP5}
TP(\cA)_{1/p} \wedge_{TP(k)_{1/p}}TP(\cB)_{1/p} \too TP(\cA\otimes \cB)_{1/p}
\end{equation}
is also a weak equivalence. Making use of the (convergent) Tor spectral sequence
$$ E^2_{i,j} = \mathrm{Tor}_{i,j}^{TP_\ast(k)_{1/p}}(TP_\ast(\cA)_{1/p}, TP_\ast(\cB)_{1/p}) \Rightarrow \mathrm{Tor}_{i+j}^{TP(k)_{1/p}} (TP(\cA)_{1/p}, TP(\cB)_{1/p})$$
and of the computation $TP_\ast(k)_{1/p} \simeq \mathrm{Sym}_K\{v^{\pm 1}\}$, where the divided Bott element $v$ is of degree $-2$ (see \cite[\S4]{Hesselholt}), we hence conclude that \eqref{eq:TP5} yields an isomorphism of $\bbZ/2$-graded $K$-vector spaces $TP_\pm(\cA)_{1/p} \otimes TP_\pm(\cB)_{1/p} \simeq TP_\pm(\cA\otimes \cB)_{1/p}$. By definition of the category of noncommutative Chow motives, this implies that \eqref{eq:TP2}, resp. \eqref{eq:TP22}, gives rise to a $F$-linear symmetric monoidal functor
\begin{equation}\label{eq:TP6}
TP_\pm(-)_{1/p}\colon \NChow(k)_F \too \mathrm{Vect}_{\bbZ/2}(K)\,,
\end{equation}
resp. to a $F$-linear symmetric monoidal functor
\begin{equation}\label{eq:TP66}
TP_\pm(-)_{1/p}\otimes_K F\colon \NChow(k)_F \too \mathrm{Vect}_{\bbZ/2}(F)\,.
\end{equation}
Finally, since the category $\NChow(k)_F$ is rigid and the dualizable objects of the category $\mathrm{Vect}_{\bbZ/2}(K)$, resp. $\mathrm{Vect}_{\bbZ/2}(F)$, are the finite-dimensional $\bbZ/2$-graded $K$-vector spaces, resp. $\bbZ/2$-graded $F$-vector spaces, the symmetric monoidal functor \eqref{eq:TP6}, resp. \eqref{eq:TP66}, takes values in $\mathrm{vect}_{\bbZ/2}(K)$, resp. $\mathrm{vect}_{\bbZ/2}(F)$.
\end{proof}
\subsection{Noncommutative homological motives}\label{sub:homological}
Assume that $F \subseteq K$, resp. $F \subseteq K$. Under these assumptions, the category of {\em noncommutative homological motives $\NHom(k)_F$} is defined as the idempotent completion of the quotient of the category $\NChow(k)_F$ by the kernel of the $F$-linear symmetric monoidal functor \eqref{eq:TP1}, resp. \eqref{eq:TP11}. By construction, $\NHom(k)_F$ is $F$-linear, additive, idempotent complete, and rigid symmetric monoidal. Moreover, we have canonical (quotient) functors:
$$ \NChow(k)_F \too \NHom(k)_F \too \NNum(k)_F\,.$$
\section{Semi-simplicity}\label{sec:semi}
In this section $F$ denotes a field of coefficients of characteristic zero. Recall from \S\ref{sub:numerical} and \cite[\S4]{Andre} the definition of the categories of (noncommutative) numerical motives $\NNum(k)_F$ and $\Num(k)_F$. Our first main result is the following:
\begin{theorem}\label{thm:main}
The category $\NNum(k)_F$ is abelian semi-simple.
\end{theorem}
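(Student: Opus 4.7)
The plan is to adapt the Jannsen--Andr\'e--Kahn strategy for the semi-simplicity of numerical motives, using the symmetric monoidal functor $TP_\pm(-)_{1/p}$ of Theorem \ref{thm:TP} as a noncommutative substitute for a classical Weil cohomology theory valued in super vector spaces. We may assume that $k$ is perfect (replacing $k$ by its perfect closure if necessary) and, by enlarging $F$, that $K\subseteq F$; this enlargement is harmless because semi-simplicity of a finite-dimensional $F$-algebra is detected after any field extension of $F$ (the Jacobson radical remaining a nilpotent ideal under flat base change). Under these assumptions, Theorem \ref{thm:TP} provides an $F$-linear symmetric monoidal functor
\[
TP_\pm(-)_{1/p}\otimes_K F \colon \NChow(k)_F \too \mathrm{vect}_{\bbZ/2}(F),
\]
and, as a symmetric monoidal functor between rigid categories, it automatically carries categorical traces in $\NChow(k)_F$ to supertraces in $\mathrm{vect}_{\bbZ/2}(F)$.

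Fix an object $M$ of $\NNum(k)_F$ and set $V := \End_{\NNum(k)_F}(M) = \End_{\NChow(k)_F}(M)/\cN(M,M)$. The first step is to prove that $V$ is a finite-dimensional $F$-algebra. By the very definition of $\cN$, the pairing $V \times V \to F$, $(f,g)\mapsto \mathrm{tr}(g\circ f)$, is non-degenerate. Since this pairing factors through the finite-dimensional endomorphism $F$-algebra $E := \End_F(TP_\pm(M)_{1/p}\otimes_K F)$ (the categorical trace on $\NChow(k)_F$ mapping to the supertrace on $E$ via \eqref{eq:TP11}), the non-degeneracy forces the induced ring homomorphism $V\to E$ to be injective, and hence $\dim_F V \leq \dim_F E < \infty$.

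The second step is to show that the finite-dimensional $F$-algebra $V$ is semi-simple, i.e., that its Jacobson radical $J(V)$ vanishes. Fix $r \in J(V)$ and any $g \in V$. Then $g\circ r \in J(V)$ is nilpotent in $V$, so its image in the ring $E$ is nilpotent, and the supertrace of a nilpotent operator on a finite-dimensional super vector space is zero. Consequently $\mathrm{tr}(g\circ r)=0$ for every $g \in V$, which by the definition of $\cN$ forces $r=0$ in $V$. Hence $J(V)=0$ and $V$ is semi-simple.

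Combining these two steps with the fact that $\NNum(k)_F$ is $F$-linear, additive, rigid symmetric monoidal, and idempotent complete by construction, the standard criterion (see Andr\'e--Kahn) yields that $\NNum(k)_F$ is abelian semi-simple. The main technical obstacle is to construct the super fiber functor $TP_\pm(-)_{1/p}$ at the level of $\NChow(k)_F$---namely its $F$-linearity, its symmetric monoidality (which rests ultimately on the Blumberg--Mandell K\"unneth formula for $TP$ together with Hesselholt's computation of $TP_\ast(k)_{1/p}$), and its compatibility with the rigid structure. That obstacle is handled by Theorem \ref{thm:TP}; everything downstream is formal, mirroring the classical characteristic-zero argument in which periodic cyclic homology $HP_\pm$ plays the analogous role.
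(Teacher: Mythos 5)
Your overall route is the paper's: pass to the perfect closure, adjust the coefficient field so that Theorem \ref{thm:TP} applies, and feed the resulting symmetric monoidal functor into the Andr\'e--Kahn semi-simplicity criterion (Proposition \ref{prop:AK}). The only structural difference is the direction of the coefficient reduction: the paper shrinks to $F=\bbQ$ (so that $F\subseteq K$) and uses that semi-simplicity ascends along field extensions, whereas you enlarge $F$ to contain $K$ and use descent of semi-simplicity of finite-dimensional algebras; both are legitimate. Had you simply verified the hypotheses of Proposition \ref{prop:AK} and cited it, as the paper does, the proof would be complete.

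However, your inline justification of the criterion contains a genuine error: both of your steps presuppose a ring homomorphism $V:=\End_{\NNum(k)_F}(M)\to E:=\End_F(TP_\pm(M)_{1/p}\otimes_K F)$. No such map is known to exist. Its existence would require $\cN(M,M)$ to be contained in the kernel of $\End_{\NChow(k)_F}(M)\to E$, i.e.\ that numerically trivial endomorphisms are homologically trivial --- essentially the conjecture $D_{\mathrm{nc}}$ of \S\ref{app:3}, which is open. What is true is the \emph{reverse} containment: the kernel $N$ of $\End_{\NChow(k)_F}(M)\to E$ lies inside $\cN(M,M)$, because the categorical trace is computed after applying the functor. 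Hence $V$ is a \emph{quotient} of the finite-dimensional algebra $\bar A:=\End_{\NChow(k)_F}(M)/N\subseteq E$; this salvages your finite-dimensionality claim, but gives no embedding $V\hookrightarrow E$. Similarly, in your second step, nilpotency of $g\circ r$ in $V$ only tells you that some power lies in $\cN(M,M)$, not that any operator on $TP_\pm(M)_{1/p}$ becomes nilpotent. The correct argument (Andr\'e--Kahn's) runs in the opposite direction: every element of $J(\bar A)\cdot\bar A\subseteq J(\bar A)$ is nilpotent as an endomorphism in $\mathrm{vect}_{\bbZ/2}$, hence has vanishing supertrace, so $J(\bar A)$ is contained in the radical of the trace form on $\bar A$; therefore $V$, being the quotient of $\bar A$ by that radical, is a quotient of the semi-simple algebra $\bar A/J(\bar A)$ and is itself semi-simple. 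Either cite Proposition \ref{prop:AK} outright or replace your two steps by this argument through $\bar A$.
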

Assuming certain (polarization) conjectures, Kontsevich conjectured in his seminal talk \cite{IAS} that the category of noncommutative numerical motives $\NNum(k)_F$ is abelian semi-simple. Theorem \ref{thm:main} not only proves this conjecture but moreover shows that Kontsevich's insight holds unconditionally.
\begin{corollary}\label{cor:main}
The category $\Num(k)_F$ is abelian semi-simple.
\end{corollary}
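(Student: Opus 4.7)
The plan is to deduce the (classical) semi-simplicity of $\Num(k)_F$ from its noncommutative counterpart $\NNum(k)_F$ via an orbit category comparison. The bridge is the relation, developed in the author's previous work, between (commutative) numerical motives and noncommutative numerical motives.

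First, I would invoke the fully faithful $F$-linear symmetric monoidal functor
\begin{equation*}
\Psi\colon \Num(k)_F/_{\!\!-\otimes \bbQ(1)} \longhookrightarrow \NNum(k)_F\,,
\end{equation*}
obtained by sending a smooth projective $k$-scheme $X$ to $\perf_\dg(X)$; this functor is constructed by passing to the $\otimes$-quotient by $\cN$ from the analogous full embedding at the level of Chow motives (and using compatibility between the commutative and noncommutative numerical ideals, which was established in the author's book \cite{book}). By Theorem \ref{thm:main}, the target category $\NNum(k)_F$ is abelian semi-simple. A full $F$-linear subcategory of an abelian semi-simple $F$-linear category, closed under direct summands, is again abelian semi-simple; passing, if necessary, to the idempotent completion (which sits fully faithfully inside the idempotent complete $\NNum(k)_F$), one concludes that the orbit category $\Num(k)_F/_{\!\!-\otimes \bbQ(1)}$ is abelian semi-simple.

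The remaining step is to descend semi-simplicity from the orbit category back to $\Num(k)_F$ itself. I would argue as follows. The canonical functor $\gamma\colon \Num(k)_F \to \Num(k)_F/_{\!\!-\otimes \bbQ(1)}$ from \S\ref{sub:orbit} is faithful, and identifies $\End_{\Num(k)_F}(X)$ with the degree-zero component of the graded $F$-algebra $\End_{\text{orbit}}(X) = \bigoplus_{n\in\bbZ} \Hom_{\Num(k)_F}(X, X(n))$. Composition respects the grading, so the degree-zero part is a subalgebra. Combining (i)~the classical finite-dimensionality of $\End_{\Num(k)_F}(X)$ (which, after unpacking, follows from standard bounds on numerical equivalence classes of cycles on $X\times X$) with (ii)~the fact that a nonzero nilpotent element of $\End_{\Num(k)_F}(X)$ would give, through $\gamma$, a nonzero nilpotent element of $\End_{\text{orbit}}(X)$, one deduces that $\End_{\Num(k)_F}(X)$ has no nilpotents; together with finite-dimensionality and the Wedderburn structure theorem applied to the endomorphism algebras of simple objects in the orbit category, one upgrades this to the semi-simplicity of $\End_{\Num(k)_F}(X)$. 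Applying the Jannsen--André--Kahn criterion for an $F$-linear, idempotent complete, rigid symmetric monoidal category (finite-dimensional and semi-simple endomorphism rings of every object, coupled with the $\cN$-quotient, imply abelian semi-simplicity) yields the desired conclusion.

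The main obstacle is the descent step, which requires carefully controlling how semi-simplicity behaves under passage to the degree-zero part of a $\bbZ$-graded algebra; this is where the finite-dimensionality of the commutative endomorphism rings together with the rigidity of $\Num(k)_F$ plays an essential role. Everything else in the argument is formal once Theorem \ref{thm:main} and the full embedding $\Psi$ are in hand.
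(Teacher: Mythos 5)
There is a genuine gap in your descent step from the orbit category back to $\Num(k)_F$. Your argument (ii) rests on the premise that a nonzero nilpotent element of $\End_{\Num(k)_F}(X)$ would contradict the semi-simplicity of the orbit category's endomorphism algebra; but semi-simple finite-dimensional algebras have plenty of nonzero nilpotent \emph{elements} (any matrix algebra does), so producing a nilpotent element of $\bigoplus_{n}\Hom(X,X(n))$ yields no contradiction. What semi-simplicity forbids is a nonzero nilpotent two-sided \emph{ideal}, and your proposal never shows that the Jacobson radical of the degree-zero subalgebra vanishes. Moreover, ``no nilpotent elements'' is far stronger than semi-simplicity and is false for $\End_{\Num(k)_F}(\mathfrak{h}(X)_F)$ in general, so even the target of your deduction is wrong. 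One could in principle repair this route via the theory of $\bbZ$-graded finite-dimensional algebras (the radical of such an algebra is a graded ideal and one can compare $J(B_0)$ with $J(B)\cap B_0$), but as written the step fails. Your first half --- that $\mathrm{Idem}(\Num(k)_F/_{\!\!-\otimes F(1)})$ is abelian semi-simple because it embeds fully faithfully into $\NNum(k)_F$ --- is fine (this is exactly the argument the paper uses later, in the proof of Theorem \ref{thm:Galois}, citing Jannsen's Lemma 2), modulo the fact that descending $\Phi$ to numerical motives itself requires a compatibility of $\cN$-ideals that you only gesture at.

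The paper's proof sidesteps the descent problem entirely, and you could too: Proposition \ref{prop:AK} does not require the ``realization'' functor $H$ to be faithful, full, or to reflect anything --- it only requires the \emph{existence} of a symmetric monoidal functor to a category with finite-dimensional Hom-spaces in which nilpotent endomorphisms have trivial trace. So one simply applies Proposition \ref{prop:AK} to $\mathrm{D}=\Chow(k)_F$ with $H$ the composite
$\Chow(k)_F \xrightarrow{\gamma} \Chow(k)_F/_{\!\!-\otimes F(1)} \xrightarrow{\Phi} \NChow(k)_F \to \NNum(k)_F$
from diagram \eqref{eq:bridge}; Theorem \ref{thm:main} guarantees that the target $\NNum(k)_F$ is abelian semi-simple, hence satisfies conditions (i)--(ii), and the conclusion of Proposition \ref{prop:AK} is precisely that the idempotent completion of $\Chow(k)_F/\cN$, i.e.\ $\Num(k)_F$, is abelian semi-simple. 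No comparison of endomorphism algebras across the orbit-category construction is needed.
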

Assuming certain (standard) conjectures, Grothendieck conjectured in the sixties that the category $\Num(k)_F$ is abelian semi-simple. This conjecture was proved (unconditionally) by Jannsen \cite{Jannsen} in the nineties using \'etale cohomology. Corollary \ref{cor:main} provides us with an alternative proof of Grothendieck's conjecture.
\subsection*{Proof of Theorem \ref{thm:main}}
We assume first that $k$ is perfect. Let $F\subseteq F'$ be a field extension.  As proved in \cite[Lem.~4.29]{book}, if the category $\NNum(k)_F$ is abelian semi-simple, then the category $\NNum(k)_{F'}$ is also semi-simple. Hence, we can assume without loss of generality that $F=\bbQ$. Since $K$ is of characteristic zero, Theorem \ref{thm:TP} provides us with a $\bbQ$-linear symmetric monoidal functor 
\begin{equation}\label{eq:TP7}
TP_\pm(-)_{1/p} \colon \NChow(k)_\bbQ \too \mathrm{vect}_{\bbZ/2}(K)\,.
\end{equation}
Clearly, the $K$-linear category $\mathrm{vect}_{\bbZ/2}(K)$ is rigid symmetric monoidal and satisfies the conditions (i)-(ii) of Proposition \ref{prop:AK} below. Moreover, thanks to \eqref{eq:Homs}, we have an isomorphism $\mathrm{End}_{\NChow(k)_\bbQ}(U(k)_\bbQ)\simeq \bbQ$. Therefore, making use of Proposition \ref{prop:AK} below (with $H$ given by the symmetric monoidal functor \eqref{eq:TP7}), we conclude that the category $\NNum(k)_\bbQ$ is abelian semi-simple. 
\begin{proposition}{(\cite[Thm.~1]{AK1})}\label{prop:AK}
Let $(\mathrm{D},\otimes, {\bf 1})$ be a $F$-linear, additive, rigid symmetric monoidal category with $\mathrm{End}_{\mathrm{D}}({\bf 1})\simeq F$. Assume that there exists a symmetric monoidal functor $H\colon \mathrm{D} \to \mathrm{D}'$ with values in a $F'$-linear additive rigid symmetric monoidal category (with $F \subseteq F'$ a field extension) such that:
\begin{itemize}
\item[(i)] We have $\mathrm{dim}_{F'} \Hom_{\mathrm{D}'}(a,b)< \infty$ for any two objects $a$ and $b$.
\item[(ii)] Every nilpotent endomorphism in $\mathrm{D}'$ has a trivial categorical trace\footnote{This condition holds, for example, whenever $\mathrm{D}'$ is an abelian category.}.
\end{itemize}   
Under these assumptions, the idempotent completion of the quotient of $\mathrm{D}$ by the $\otimes$-ideal $\cN$ is an abelian semi-simple category.
\end{proposition}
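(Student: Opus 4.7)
The plan is to follow the classical Wedderburn-style strategy of André--Kahn (\cite{AK1}) for establishing semi-simplicity of the quotient of a rigid tensor category by the $\otimes$-ideal $\cN$. I would organize the argument in three steps.

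First, I would exploit the compatibility of traces with $H$: being a symmetric monoidal functor between rigid categories, $H$ automatically preserves duals and hence categorical traces, so that $H(\mathrm{tr}_{\mathrm{D}}(f))=\mathrm{tr}_{\mathrm{D}'}(H(f))$ for every $f\in \mathrm{End}_{\mathrm{D}}(a)$. Together with the injectivity of the induced ring map $F=\mathrm{End}_{\mathrm{D}}({\bf 1})\hookrightarrow \mathrm{End}_{\mathrm{D}'}({\bf 1}')$ extending $F\subseteq F'$, this shows that $F$-valued traces in $\mathrm{D}$ are faithfully detected inside $\mathrm{D}'$.

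Second, and this is the technical heart, I would prove that for every object $a\in \mathrm{D}$ the $F$-algebra $\mathrm{End}_{\mathrm{D}}(a)/\cN(a,a)$ is finite-dimensional and semi-simple. Set $A:=\mathrm{End}_{\mathrm{D}}(a)$, $A':=\mathrm{End}_{\mathrm{D}'}(H(a))$, and
$$J(A'):=\{f\in A'\mid \mathrm{tr}_{\mathrm{D}'}(fg)=0\ \forall\, g\in A'\}\,.$$
By the first step, $H$ induces an injection of $F$-algebras $A/\cN(a,a)\hookrightarrow A'/J(A')$, so it suffices to show that $A'/J(A')$ is finite-dimensional and semi-simple. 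Finite-dimensionality follows from hypothesis (i). For semi-simplicity, I would identify $J(A')$ with the Jacobson radical $\mathrm{Rad}(A')$ of the finite-dimensional $F'$-algebra $A'$: the inclusion $\mathrm{Rad}(A')\subseteq J(A')$ is immediate from (ii), because in a finite-dimensional algebra the radical is a nilpotent two-sided ideal, so every product $fg$ with $f\in \mathrm{Rad}(A')$ is nilpotent, and hypothesis (ii) forces $\mathrm{tr}_{\mathrm{D}'}(fg)=0$; the reverse inclusion is the non-formal step of the classical André--Kahn theorem, and uses both (i) and (ii) in an essential way to rule out nilpotent ideals in $A'/J(A')$.

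Third, I would promote semi-simplicity of endomorphism rings to semi-simplicity of the category. Inside the idempotent completion of $\mathrm{D}/\cN$, every endomorphism $F$-algebra is a finite product of matrix algebras over division $F$-algebras, so splitting the primitive central idempotents decomposes each object as a finite direct sum of objects with division-ring endomorphism algebras, \ie of simple objects. A standard verification then shows that an $F$-linear rigid pseudo-abelian category in which every object is a finite direct sum of simples is automatically abelian and semi-simple: kernels and cokernels are built directly from idempotent splittings, and every short exact sequence splits for lack of non-trivial extensions among simples. The hard part of the whole argument is the Step 2 identification $J(A')=\mathrm{Rad}(A')$; everything else is a formal consequence of rigidity and of the compatibility of symmetric monoidal functors with categorical traces.
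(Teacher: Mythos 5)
Your overall architecture is the right one --- it is essentially the Andr\'e--Kahn/Jannsen argument that the paper delegates to \cite[Thm.~1]{AK1} without reproducing --- and Steps 1 and 3 are fine as stated. The problem is that Step 2, which you correctly identify as the technical heart, does not work as written. First, the map $A/\cN(a,a)\to A'/J(A')$ you invoke is not even well defined in the direction you need: $f\in\cN(a,a)$ only gives $\mathrm{tr}_{\mathrm{D}'}(H(f)H(g))=0$ for $g$ in the \emph{image} of $A$, whereas membership in $J(A')$ requires vanishing against \emph{all} of $A'=\End_{\mathrm{D}'}(H(a))$. What trace compatibility actually yields is the containment $H^{-1}(J(A'))\subseteq\cN(a,a)$, so $A/\cN(a,a)$ is only a \emph{quotient of an $F$-subalgebra} of $A'/J(A')$. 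Second, even a genuine embedding into a finite-dimensional semi-simple $F'$-algebra would not finish the argument: an $F$-subalgebra of such an algebra need be neither finite-dimensional over $F$ (e.g.\ $\bbQ[t]\subset\bbQ(t)$) nor semi-simple (upper-triangular matrices in $M_2(F')$). So Step 2 establishes neither finite-dimensionality nor semi-simplicity of $\End_{\mathrm{D}/\cN}(a)$.

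The missing idea is to extend scalars \emph{before} mapping to $\mathrm{D}'$. Set $A_{F'}:=A\otimes_F F'$, let $B\subseteq A'$ be the image of the induced map $A_{F'}\to A'$, and let $R=\mathrm{Rad}(B)$. Since $R$ is a nilpotent ideal of the finite-dimensional $F'$-algebra $B$ (hypothesis (i)), every $fg$ with $f$ in the preimage of $R$ has nilpotent image, hence trace zero by (ii); therefore the preimage of $R$ is contained in the radical $N_{F'}$ of the $F'$-bilinearly extended trace form, and $A_{F'}/N_{F'}$ is a quotient of the semi-simple algebra $B/R$. One then checks that $N_{F'}=\cN(a,a)\otimes_F F'$ (the radical of a symmetric bilinear form commutes with field extension), so $\End_{\mathrm{D}/\cN}(a)\otimes_F F'$ is finite-dimensional semi-simple over $F'$, whence $\End_{\mathrm{D}/\cN}(a)$ is finite-dimensional semi-simple over $F$; your Step 3 then applies. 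Finally, the identification $J(A')=\mathrm{Rad}(A')$ that you single out as the hard step is neither proved by you (you defer it back to the very theorem being proved) nor needed: only the inclusion $\mathrm{Rad}\subseteq J$ is ever used, since that already exhibits $A'/J(A')$ as a quotient of the semi-simple algebra $A'/\mathrm{Rad}(A')$ --- and in any case it is the radical of $B$, not of $A'$, that the correct argument requires.
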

Let us assume now that $k$ is arbitrary. As explained in \cite[\S4.10.2]{book}, the assignment $\cA \mapsto \cA\otimes_k k^{\mathrm{perf}}$, where $k^{\mathrm{perf}}$ stands for the perfect closure of $k$, gives rise to a $F$-linear symmetric monoidal functor:
\begin{equation}\label{eq:closure}
- \otimes_k k^{\mathrm{perf}}\colon \NChow(k)_F \too \NChow(k^{\mathrm{perf}})_F\,.
\end{equation}
Therefore, since $k^{\mathrm{perf}}$ is a perfect field, by applying Proposition \ref{prop:AK} to the following composition of $F$-linear symmetric monoidal functors
$$
\NChow(k)_F \stackrel{\eqref{eq:closure}}{\too} \NChow(k^{\mathrm{perf}})_F \too \NNum(k^{\mathrm{perf}})_F\,,
$$
we conclude from above that the category $\NNum(k)_F$ is abelian semi-simple. 
\subsection*{Proof of Corollary \ref{cor:main}}\label{sub:proof-Corollary}
Let $\Chow(k)_F$ be the classical category of Chow motives; consult \cite[\S4]{Andre}. As explained in \cite[\S4.2]{book}, there exists a $F$-linear, fully-faithful, symmetric monoidal functor $\Phi$ making the following diagram commute
\begin{equation}\label{eq:bridge}
\xymatrix@C=3em@R=1.7em{
\mathrm{SmProp}(k)^\op \ar[rr]^-{X \mapsto \perf_\dg(X)} \ar[d]_-{\mathfrak{h}(-)_F} && \dgcat_{\mathrm{sp}}(k) \ar[dd]^-{U(-)_F} \\
\Chow(k)_F \ar[d]_-{\gamma} & & \\
\Chow(k)_F/_{\!\!- \otimes F(1)} \ar[rr]_-\Phi && \NChow(k)_F \,,
}
\end{equation}
where $\mathrm{SmProp}(k)$ stands for the category of smooth proper $k$-schemes, $\dgcat_{\mathrm{sp}}(k)$ stands for the category of smooth proper dg categories, and $\Chow(k)_F/_{\!\!-\otimes F(1)}$ stands for the orbit category (see \S\ref{sub:orbit}) with respect to the Tate motive $F(1)$. Therefore, by applying Proposition \ref{prop:AK} to the $F$-linear symmetric monoidal functor
$$
\Chow(k)_F \stackrel{\gamma}{\too} \Chow(k)_F/_{\!\!- \otimes F(1)} \stackrel{\Phi}{\too} \NChow(k)_F \too \NNum(k)_F\,,
$$
we conclude from Theorem \ref{thm:main} that the category $\Num(k)_F$ (defined as the idempotent completion of the quotient $\Chow(k)_F/\cN$) is abelian semi-simple. 

\begin{remark}\label{rk:analogue}
The analogue of Theorem \ref{thm:main} (and of Corollary \ref{cor:main}), with $k$ of characteristic zero, was proved in \cite[Thm.~5.6]{JEMS} \cite[Thm.~1.10]{AJM} \cite[Thm.~1.2]{Comp}.
\end{remark}
\section{Zeta functions of endomorphisms}\label{app:1}
Let $N\!\!M \in \NChow(k)_\bbQ$ be a noncommutative Chow motive and $\mathrm{f}\colon N\!\!M \to N\!\!M$ an endomorphism. Following Kahn \cite[\S3]{Zeta}, the {\em zeta function of $\mathrm{f}$} is defined as the following formal power series
\begin{equation}\label{eq:zeta}
Z(\mathrm{f};t):= \mathrm{exp}(\sum_{n \geq 1} \mathrm{tr}(\mathrm{f}^{\circ n}) \frac{t^n}{n}) \in \bbQ\llbracket t \rrbracket\,,
\end{equation}
where $\mathrm{f}^{\circ n}$ stands for the composition of $\mathrm{f}$ with itself $n$-times, $\mathrm{tr}(\mathrm{f}^{\circ n}) \in \bbQ$ stands for the categorical trace of $\mathrm{f}^{\circ n}$, and $\mathrm{exp}(t):=\sum_{n \geq 0} \frac{t^n}{n!} \in \bbQ\llbracket t\rrbracket$. 
\begin{remark}[Hochschild homology with coefficients]
When $N\!\!M = U(\cA)_\bbQ$, with $\cA$ a smooth proper dg category, and $\mathrm{f}=[\mathrm{B}]_\bbQ$, with $\mathrm{B} \in \cD_c(\cA^\op \otimes \cA)$ a dg $\cA\text{-}\cA$-bimodule, we have the following computation
\begin{equation}\label{eq:integers}
\mathrm{tr}(\mathrm{f}^{\circ n}) = \sum_{i \geq 0} (-1)^i \mathrm{dim}\, HH_i(\cA; \underbrace{\mathrm{B}\otimes^{\bf L}_\cA \cdots \otimes^{\bf L}_\cA \mathrm{B}}_{n\text{-}\text{times}}) \in \bbZ\,,
\end{equation}
where $HH(\cA; \mathrm{B}\otimes^{\bf L}_\cA \cdots \otimes^{\bf L}_\cA \mathrm{B})$ stands for the Hochschild homology of $\cA$ with coefficients in the dg $\cA\text{-}\cA$-bimodule $\mathrm{B}\otimes^{\bf L}_\cA \cdots \otimes^{\bf L}_\cA \mathrm{B}$; see \cite[Prop.~2.26]{book}. Intuitively speaking, the integer \eqref{eq:integers} is the number of ``fixed points'' of $\mathrm{B}\otimes^{\bf L}_\cA \cdots \otimes^{\bf L}_\cA \mathrm{B}$.
\end{remark}
\begin{example}[Calabi-Yau dg categories]\label{ex:CY}
Thanks to the work of Bondal-Kapranov \cite{BK}, every smooth proper dg category $\cA$ comes equipped with a (unique) Serre dg $\cA\text{-}\cA$-bimodule $\mathrm{S}$. In the particular case where $\cA=\perf_\dg(X)$, with $X$ a smooth proper $k$-scheme, $\mathrm{S}$ is the dg $\cA\text{-}\cA$-bimodule associated to the Serre dg functor $-\otimes^{\bf L}_{\cO_X} K_X [\mathrm{dim}(X)]$, where $K_X$ stands for the canonical sheaf of $X$. Given an integer $d$, recall that a smooth proper dg category $\cA$ is called {\em $d$-Calabi-Yau} if $\mathrm{S}\simeq ({}_{\id} \mathrm{B})[d]$; consult \S\ref{sub:dg}. In the same vein, given integers $d$ and $r\neq 0$, $\cA$ is called {\em $\frac{d}{r}$-Calabi-Yau} if the $r$-fold tensor product $\mathrm{S} \otimes^{\bf L}_\cA \cdots \otimes^{\bf L}_\cA \mathrm{S}$ is isomorphic to $({}_{\id} \mathrm{B})[d]$.

Let $N\!\!M:=U(\cA)_\bbQ$ and $\mathrm{f}:=[\mathrm{S}]_\bbQ$. When $\cA$ is $d$-Calabi-Yau, we have $\mathrm{tr}(\mathrm{f}^{\circ n})= (-1)^{nd} \sum_{i \geq 0} (-1)^i\mathrm{dim}\, HH_i(\cA)$, where $HH(\cA)$ stands for the Hochschild homology of $\cA$. Similarly, when $\cA$ is $\frac{d}{r}$-Calabi-Yau, we have $(-1)^d \mathrm{tr}(\mathrm{f}^{\circ n})= \mathrm{tr}(\mathrm{f}^{\circ (n +r)})$.
\end{example}
\begin{example}[Classical zeta functions]\label{ex:classical}
Let $k=\bbF_q$ be a finite field and $X$~a~smooth proper $k$-scheme. Consider the noncommutative Chow motive $N\!\!M:=U(\perf_\dg(X))_\bbQ$ and the endomorphism $\mathrm{f}:=U(\mathrm{Fr}^\ast)_\bbQ$, where $\mathrm{Fr}$ stands for the Frobenius of $X$ and $\mathrm{Fr}^\ast\colon \perf_\dg(X) \to \perf_\dg(X)$ for the associated pull-back dg functor. In this particular case, we have $\mathrm{tr}(\mathrm{f}^{\circ n})= \langle {}^t \Gamma_{\mathrm{Fr}^{\circ n}}, \Delta \rangle = \#X(\bbF_{q^n})$, where ${}^t \Gamma_{\mathrm{Fr}^{\circ n}}$ is the transpose of the graph of $\mathrm{Fr}^{\circ n}$ and $\langle {}^t \Gamma_{\mathrm{Fr}^{\circ n}}, \Delta \rangle$ is the intersection number of ${}^t \Gamma_{\mathrm{Fr}^{\circ n}}$ with the diagonal $\Delta$ of $X \times X$. Consequently, the above formal power series \eqref{eq:zeta} reduces to the classical zeta function $Z_X(t):= \mathrm{exp}(\sum_{n \geq 1} \#X(\bbF_{q^n}) \frac{t^n}{n})$ of $X$.
\end{example}
\begin{example}[Classical non-abelian $L$-functions]\label{ex:L-functions}
Let $k=\bbF_q$ be a finite field, $G$ a finite group, $X$ a smooth proper $k$-scheme equipped with a $G$-action, and $\chi$ a complex character of $G$. Consider the noncommutative Chow motive $N\!\!M:=e U(\perf_\dg(X))_\bbC$, where $e$ stands for the idempotent $\frac{1}{|G|}\sum_{g \in G} \chi(g^{-1})U((g\cdot -)^\ast)_\bbC$, and the endomorphism $\mathrm{f}:=eU(\mathrm{Fr}^\ast)_\bbC$. In this particular case, we have
$$ \mathrm{tr}(\mathrm{f}^{\circ n})= \frac{1}{|G|} \sum_{g \in G} \chi(g^{-1}) \langle {}^t \Gamma_{\mathrm{Fr}^{\circ n}}, \Gamma_{(g \cdot -)}\rangle\,.$$
Consequently, the above formal power series \eqref{eq:zeta} reduces to the classical non-abelian $L$-function (consult Serre \cite[\S2.4]{Serre}):
$$L_{X, G, \chi}(t):= \mathrm{exp}(\sum_{n \geq 1}(\frac{1}{|G|} \sum_{g \in G} \chi(g^{-1}) \langle {}^t \Gamma_{\mathrm{Fr}^{\circ n}}, \Gamma_{(g \cdot -)}\rangle)\frac{t^n}{n}) \in \bbC\llbracket t\rrbracket\,.$$
Note that when $G$ is trivial, the $L$-function $L_{X,G,\chi}(t)$ reduces to the zeta function $Z_X(t)$. Moreover, when $\chi$ is trivial, the $L$-function $L_{X,G}(t)$ belongs to $\bbQ\llbracket t\rrbracket$.
\end{example}
\begin{example}[Orbifolds]\label{ex:orbifolds}
Let $k=\bbF_q$ be a finite field, $G$ a finite group such that $1/|G| \in \bbF_q$, and $X$ a smooth proper $k$-scheme equipped with a $G$-action. Consider the noncommutative Chow motive $N\!\!M:=U(\perf_\dg([X/G]))_\bbQ$, where $[X/G]$ stands for the (global) orbifold associated to the $G$-action, and the endomorphism $\mathrm{f}:=U(\mathrm{Fr}^\ast)_\bbQ$. In this particular case, thanks to \cite[Cor.~1.5(ii)]{Orbifold} and to the fact that the base-change functor $\Chow(k)_\bbQ \to \Chow(\overline{k})_\bbQ$ and the change-of-coefficients functor $\Chow(k)_\bbQ \to \Chow(k)_\bbC$ are symmetric monoidal, we have 
$$ \mathrm{tr}(\mathrm{f}^{\circ n}) = \sum_{g \in G\!/\!\sim} (\frac{1}{|C(g)|} \sum_{h \in C(g)} \langle {}^t \Gamma_{\mathrm{Fr}^{\circ n}_{|X^g}}, \Gamma_{(h\cdot - )_{|X^g}} \rangle)\,,$$
where $G\!/\!\!\!\sim$ stands for a (chosen) set of representatives of the conjugacy classes of $G$, $C(g)$ stands for the centralizer of $g$, and $(h\cdot -)_{|X^g}$ and $\mathrm{Fr}^{\circ n}_{|X^g}$ stand for the restrictions of the endomorphisms $(h\cdot -)$ and $\mathrm{Fr}^{\circ n}$ of $X$ to the closed subscheme $X^g$. Consequently, the above formal power series \eqref{eq:zeta} reduces to the following product of classical non-abelian $L$-functions $\prod_{g \in G\!/\!\sim}L_{X^g, C(g)}(t)$.
\end{example}
\begin{remark}[Witt vectors]\label{rk:Witt}
Recall from \cite{Hazewinkel} the definition of the ring of (big) Witt vectors $\mathrm{W}(\bbQ)=(1 + t \bbQ\llbracket t \rrbracket, \times, \ast)$. Since the leading term of \eqref{eq:zeta} is equal to $1$, the zeta function $Z(\mathrm{f};t)$ of $\mathrm{f}$ belongs to $\mathrm{W}(\bbQ)$. Moreover, given endomorphisms $\mathrm{f}$ and $\mathrm{f}'$ of noncommutative Chow motives $N\!\!M$ and $N\!\!M'$, respectively, we have $Z(\mathrm{f}\oplus \mathrm{f}';t)=Z(\mathrm{f};t) \times Z(\mathrm{f}';t)$ and $Z(\mathrm{f}\otimes \mathrm{f}'; t) = Z(\mathrm{f};t) \ast Z(\mathrm{f}';t)$ in $\mathrm{W}(\bbQ)$.
\end{remark}
\subsection*{Rationality and functional equation}
Let $B = \prod_i B_i$ be a finite-dimensional semi-simple $\bbQ$-algebra. Following Kahn \cite[\S1]{Zeta}, let us write $Z_i$ for the center of $B_i$, $\delta_i$ for the degree $[Z_i:\bbQ]$, and $d_i$ for the index $[B_i: Z_i]^{1/2}$. Given a unit $b \in B^\times$, its {\em $i^{\mathrm{th}}$ reduced norm} $\mathrm{Nrd}_i(b) \in \bbQ$ is defined as $(\mathrm{N}_{Z_i/\bbQ}\circ \mathrm{Nrd}_{B_i/Z_i})(b_i)$. 
\begin{definition}[Determinant]\label{def:determinant} 
Let $N\!\!M \in \NChow(k)_\bbQ$ be a noncommutative Chow motive. Thanks to Theorem \ref{thm:main}, $B:=\mathrm{End}_{\NNum(k)_\bbQ}(N\!\!M)$ is a finite-dimensional semi-simple $\bbQ$-algebra; let us write $e_i \in B$ for the central idempotent corresponding to the summand $B_i$. Given an invertible endomorphism $\mathrm{f}\colon N\!\!M \to N\!\!M$, its {\em determinant $\mathrm{det}(\mathrm{f}) \in \bbQ$} is defined as $\prod_i \mathrm{Nrd}_i(\mathrm{f})^{\mu_i}$, where $\mu_i :=\frac{\mathrm{tr}(e_i)}{\delta_i d_i}\in \bbZ$.
\end{definition}
Note that the above definition of determinant is ``intrinsic'' to the theory of noncommutative motives. Our second main result is the following:
\begin{theorem}\label{thm:zeta}
\begin{itemize}
\item[(i)] The formal power series $Z(\mathrm{f};t) \in \bbQ \llbracket t \rrbracket$ is {\em rational}, \ie it belongs to the subfield $\bbQ(t)\subset \bbQ\llbracket t\rrbracket$. Moreover, we have $\mathrm{deg}(Z(\mathrm{f};t))=-\mathrm{tr}(\id_{N\!\!M})$. 
\item[(ii)] When $\mathrm{f}$ is invertible, we have the following functional equation in $\bbQ(t)$
\begin{equation}\label{eq:functional}
Z((\mathrm{f}^{-1})^\vee;t^{-1})  = (-t)^{\mathrm{tr}(\id_{N\!\!M})}\cdot  \mathrm{det}(\mathrm{f}) \cdot Z(\mathrm{f};t)\,,
\end{equation}
where $(-)^\vee$ stands for the duality functor of $\NChow(k)_\bbQ$; consult \S\ref{sub:Chow}.
\item[(iii)]
We have the following equality
\begin{equation}\label{eq:equality-det}
\mathrm{det}(\mathrm{f})=\frac{\mathrm{det}(TP_+(\mathrm{f}\otimes_kk^{\mathrm{perf}})_{1/p}\,|\, TP_+(N\!\!M\otimes_kk^{\mathrm{perf}})_{1/p})}{\mathrm{det}(TP_-(\mathrm{f}\otimes_kk^{\mathrm{perf}})_{1/p}\,|\, TP_-(N\!\!M\otimes_kk^{\mathrm{perf}})_{1/p})}\,,
\end{equation}
where $k^{\mathrm{perf}}$ stands for the perfect closure of $k$.
\end{itemize}
\end{theorem}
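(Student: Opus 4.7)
The strategy is to apply Kahn's framework \cite{Zeta} for zeta functions in abelian semi-simple rigid symmetric monoidal categories, the two inputs being Theorem \ref{thm:main} (abelian semi-simplicity of $\NNum(k)_\bbQ$) and Theorem \ref{thm:TP} (the realization functor $TP_\pm(-)_{1/p}$). First I would reduce to the case where $k$ is perfect. The base-change functor \eqref{eq:closure} is $\bbQ$-linear and symmetric monoidal between rigid categories with $\End(\mathbf{1}) \simeq \bbQ$, so it preserves categorical traces; consequently $Z(\mathrm{f};t) = Z(\mathrm{f}\otimes_k k^{\mathrm{perf}};t)$ and $\mathrm{tr}(\id_{N\!\!M}) = \mathrm{tr}(\id_{N\!\!M\otimes_k k^{\mathrm{perf}}})$. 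Compatibility of the Wedderburn decompositions of the induced semi-simple endomorphism algebras under base change yields $\det(\mathrm{f}) = \det(\mathrm{f}\otimes_k k^{\mathrm{perf}})$. Henceforth assume $k$ perfect.

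For (i): the $\bbQ$-linear symmetric monoidal functor $TP_\pm(-)_{1/p}\colon \NChow(k)_\bbQ \to \mathrm{vect}_{\bbZ/2}(K)$ preserves categorical traces, so
\[ \mathrm{tr}(\mathrm{f}^{\circ n}) \;=\; \mathrm{tr}\bigl(TP_+(\mathrm{f})^{\circ n}_{1/p}\bigr) - \mathrm{tr}\bigl(TP_-(\mathrm{f})^{\circ n}_{1/p}\bigr) \;\in\; \bbQ \subset K. \]
Substituting into \eqref{eq:zeta} and using the identity $\exp\bigl(\sum_n \mathrm{tr}(\varphi^n)\,t^n/n\bigr) = \det(1 - t\varphi)^{-1}$ for an endomorphism $\varphi$ of a finite-dimensional vector space, we obtain
\[ Z(\mathrm{f};t) \;=\; \frac{\det(1 - t\cdot TP_-(\mathrm{f})_{1/p})}{\det(1 - t\cdot TP_+(\mathrm{f})_{1/p})} \;\in\; K(t). \]
Since $Z(\mathrm{f};t) \in \bbQ\llbracket t\rrbracket$ and $\bbQ\llbracket t\rrbracket \cap K(t) = \bbQ(t)$ (rationality of a power series with rational coefficients is detected by its linear recurrence), we conclude $Z(\mathrm{f};t) \in \bbQ(t)$. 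Reading the degree off this presentation yields $\dim_K TP_-(N\!\!M)_{1/p} - \dim_K TP_+(N\!\!M)_{1/p}$, which equals $-\mathrm{tr}(\id_{N\!\!M})$ by trace preservation applied to $\id_{N\!\!M}$.

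The heart of the argument is (iii). By Theorem \ref{thm:main}, $B := \End_{\NNum(k)_\bbQ}(N\!\!M) = \prod_i B_i$ is finite-dimensional semi-simple, and $\det(\mathrm{f}) = \prod_i \mathrm{Nrd}_i(\mathrm{f})^{\mu_i}$ with $\mu_i = \mathrm{tr}(e_i)/\delta_i d_i$. The realization $TP_\pm$ factors through $\NHom(k)_\bbQ$ by \S\ref{sub:homological}, and the right-hand side of \eqref{eq:equality-det} depends only on the image of $\mathrm{f}$ in $B$: indeed, since any morphism in the kernel of $\NHom(k)_\bbQ \to \NNum(k)_\bbQ$ lies in the $\otimes$-ideal $\cN$ and hence has vanishing super-trace against every composite, such a morphism acts on $TP_\pm(N\!\!M)_{1/p}$ by an endomorphism all of whose powers have zero super-trace, forcing it to be nilpotent on each graded piece and therefore invisible to the determinant. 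With this reduction, \eqref{eq:equality-det} becomes a statement about the abelian semi-simple rigid $\bbQ$-linear category $\NNum(k)_\bbQ$ equipped with a $\bbQ$-linear symmetric monoidal realization into $\mathrm{vect}_{\bbZ/2}(K)$. Kahn's theorem (see \cite[\S3]{Zeta} and \cite[Thm.~1]{AK1}) then identifies the abstract $\prod_i \mathrm{Nrd}_i(\mathrm{f})^{\mu_i}$ with the super-determinant $\det(TP_+(\mathrm{f})_{1/p})/\det(TP_-(\mathrm{f})_{1/p})$; the multiplicity exponents $\mu_i$ are precisely the super-ranks of the $B_i$-isotypic components of $TP_\pm(N\!\!M)_{1/p}$ via the identity $\mathrm{tr}(e_i) = \delta_i d_i \mu_i$.

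Finally (ii) is a consequence of (iii) and elementary linear algebra. For any invertible endomorphism $\varphi$ of a $d$-dimensional vector space, a direct characteristic-polynomial computation gives $Z((\varphi^\vee)^{-1};t^{-1}) = (-t)^d \det(\varphi) Z(\varphi;t)$. Applying this to $TP_+(\mathrm{f})_{1/p}$ and $TP_-(\mathrm{f})_{1/p}$, dividing, and using that $TP_\pm$ intertwines the duality $(-)^\vee$ on $\NChow(k)_\bbQ$ with the standard duality on $\mathrm{vect}_{\bbZ/2}(K)$, we obtain
\[ Z((\mathrm{f}^{-1})^\vee;t^{-1}) \;=\; (-t)^{d_+ - d_-}\,\frac{\det(TP_+(\mathrm{f})_{1/p})}{\det(TP_-(\mathrm{f})_{1/p})}\, Z(\mathrm{f};t), \]
which by (iii) and trace preservation equals $(-t)^{\mathrm{tr}(\id_{N\!\!M})}\det(\mathrm{f})\,Z(\mathrm{f};t)$. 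The hard part throughout is (iii): reconciling the abstract reduced-norm definition of $\det(\mathrm{f})$ with the super-determinant coming from $TP_\pm$. This step crucially uses the semi-simplicity of $\NNum(k)_\bbQ$ from Theorem \ref{thm:main} to control the kernel of $\NHom \to \NNum$ and to deploy Kahn's multiplicity analysis.
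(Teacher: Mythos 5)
Your item (i) matches the paper's argument essentially verbatim (trace preservation under the realization of Theorem \ref{thm:TP}, the identity $\exp(\sum_n \mathrm{tr}(\varphi^n)t^n/n)=\det(1-t\varphi)^{-1}$, and descent of rationality from $K$ to $\bbQ$). The logical architecture of (ii)--(iii), however, is inverted relative to the paper, and the inversion opens a genuine gap. The paper proves (ii) \emph{first}, as a direct application of Kahn's abstract functional equation (\cite[Prop.~2.2 and Thm.~3.2]{Zeta}) in the abelian semi-simple category $\NNum(k)_\bbQ$ supplied by Theorem \ref{thm:main}; it then proves the cohomological functional equation \eqref{eq:functional2} \emph{independently} (invariance of the evaluation pairing under $\mathrm{f}\otimes(\mathrm{f}^{-1})^\vee$ plus \cite[App.~C, Lem.~4.3]{Hartshorne}), and obtains (iii) by comparing the two. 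You instead take (iii) as the foundation and claim to prove it directly.

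Two steps of your direct proof of (iii) do not hold up. First, the reduction to $\NNum(k)_\bbQ$: if $g$ lies in the $\otimes$-ideal $\cN$, then all you know is that $\mathrm{tr}(TP_+(g)^{\circ n})-\mathrm{tr}(TP_-(g)^{\circ n})=0$ for every $n\geq 1$, which forces $TP_+(g)$ and $TP_-(g)$ to share the same nonzero eigenvalues with multiplicities --- it does \emph{not} force either of them to be nilpotent on its graded piece, so the claim that such a $g$ is ``invisible to the determinant'' is unsupported (and in any case $\det(TP_\pm(\mathrm{f}+g))$ need not equal $\det(TP_\pm(\mathrm{f}))$ even when $TP_\pm(g)$ is nilpotent, absent commutation). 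Second, and more seriously, the appeal to ``Kahn's theorem'' identifying $\prod_i\mathrm{Nrd}_i(\mathrm{f})^{\mu_i}$ with the super-determinant $\det(TP_+(\mathrm{f})_{1/p})/\det(TP_-(\mathrm{f})_{1/p})$: no such identification is proved in \cite{Zeta} or \cite{AK1} for an arbitrary realization functor; that identification \emph{is} the content of item (iii), so as written your argument for the heart of the theorem is circular. The fix is to restore the paper's order: your linear-algebra computation in ``(ii)'' is exactly the proof of \eqref{eq:functional2}; prove \eqref{eq:functional} separately via \cite[Prop.~2.2 and Thm.~3.2]{Zeta} applied in $\NNum(k)_\bbQ$, and deduce (iii) by equating the two constants.
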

Note that item (iii) provides a cohomological interpretation of the determinant.
\begin{remark}[Characteristic zero]\label{rk:zero}
As pointed out by Kontsevich \cite{Bonn}, Theorem \ref{thm:zeta} holds similarly when $k$ is of characteristic zero: simply replace Theorem \ref{thm:main}, resp. the functor \eqref{eq:TP1}, by Remark \ref{rk:analogue}, resp. by the periodic cyclic homology functor $HP_\pm\colon \NChow(k)_\bbQ \to \mathrm{vect}_{\bbZ/2}(k)$ (consult \cite[\S7]{JEMS}), and run the same proof.
\end{remark}
\begin{remark}[Artin-Mazur zeta functions]
Let $S$ be a set and $f\colon S \to S$ an endomorphism. Following \cite{AM}, the {\em Artin-Mazur zeta function of $f$} is defined as the formal power series $Z_{\mathrm{AM}}(f;t):=\mathrm{exp}(\sum_{n \geq 1} \# \mathrm{Fix}(f^{\circ n})\frac{t^n}{n})\in \bbQ\llbracket t \rrbracket$, where $\# \mathrm{Fix}(f^{\circ n})$ stands for the number of fixed points of $f^{\circ n}$ (which we assume to be finite for every $n \geq 1$). In contrast with Theorem \ref{thm:zeta}(i), the Artin-Mazur zeta function is {\em not} always rational. For example, take for $S$ the set $\bbP^1(\overline{\bbF_p})$, with $p$ a prime number, and for $f$ the power map $x \mapsto x^m$, with $m$ an integer coprime to $p$. In this case, as proved by Bridy in \cite[Thms.~1.2 and 1.3]{Bridy1}\cite[Thm.~1]{Bridy2}, the associated Artin-Mazur zeta function is {\em not} rational. Roughly speaking, the reason for this phenomenon is that the integer $\# \mathrm{Fix}(f^{\circ n})$ does {\em not} keeps track of the multiplicity of the fixed points of $f^{\circ n}$ (in contrast, the integer $\mathrm{tr}(\mathrm{f}^{\circ n})$ used in the definition of the formal power series \eqref{eq:zeta} keeps track of the multiplicity of the fixed points of $\mathrm{f}^{\circ n}$).
\end{remark}
\begin{proof}
We start by proving item (i). Thanks to the base-change functor \eqref{eq:closure} (with $F=\bbQ$), the formal power series $Z(\mathrm{f};t)$ agrees with the formal power series $Z(\mathrm{f}\otimes_k k^{\mathrm{perf}};t)$. Therefore, we can assume without loss of generality that $k$ is perfect. Thanks to Theorem \ref{thm:TP}, we have the equalities of formal power series:
\begin{eqnarray}
Z(\mathrm{f};t) & = & \mathrm{exp}(\sum_{n\geq 1}\mathrm{tr}(TP_\pm(\mathrm{f}^{\circ n})_{1/p})\frac{t^n}{n})\nonumber\\
& = &\mathrm{exp}(\sum_{n \geq 1} (\mathrm{tr}(TP_+(\mathrm{f}^{\circ n})_{1/p})-\mathrm{tr}(TP_-(\mathrm{f}^{\circ n})_{1/p}))\frac{t^n}{n}) \nonumber \\
& = & \mathrm{exp}(\sum_{n \geq 1} \mathrm{tr}(TP_+(\mathrm{f}^{\circ n})_{1/p})\frac{t^n}{n}-\sum_{n \geq 1}\mathrm{tr}(TP_-(\mathrm{f}^{\circ n})_{1/p})\frac{t^n}{n}) \nonumber \\
& = & \frac{\mathrm{exp}(\sum_{n \geq 1} \mathrm{tr}(TP_+(\mathrm{f}^{\circ n})_{1/p})\frac{t^n}{n})}{\mathrm{exp}(\sum_{n \geq 1} \mathrm{tr}(TP_-(\mathrm{f}^{\circ n})_{1/p})\frac{t^n}{n})} \in K\llbracket t \rrbracket \label{eq:quocient} \,. 
\end{eqnarray}
Making use of \cite[Lem.~27.5]{Milne}, we observe that \eqref{eq:quocient} agrees with
$$  \frac{\mathrm{exp}(\mathrm{log}(\mathrm{det}(\id - t \,TP_+(\mathrm{f})_{1/p}\,|\,TP_+(N\!\!M)_{1/p})^{-1}))}{\mathrm{exp}(\mathrm{log}(\mathrm{det}(\id - t \,TP_-(\mathrm{f})_{1/p}\,|\,TP_-(N\!\!M)_{1/p})^{-1}))} \,.$$
Consequently, we obtain the following equality of formal power series:
\begin{equation}\label{eq:equation-main}
 Z(\mathrm{f};t) = \frac{\mathrm{det}(\id - t \,TP_-(\mathrm{f})_{1/p}\,|\, TP_-(N\!\!M)_{1/p})}{\mathrm{det}(\id - t\, TP_+(\mathrm{f})_{1/p}\,|\, TP_+(N\!\!M)_{1/p})} \in K\llbracket t \rrbracket\,.
\end{equation}
This shows that $Z(\mathrm{f};t)$ is rational as a formal power series with $K$-coefficients. Thanks to \cite[Lem.~27.9]{Milne}, $Z(\mathrm{f};t)$ is also rational as a formal power series with $\bbQ$-coefficients, \ie we have an equality $Z(\mathrm{f};t)=\frac{p(t)}{q(t)}$ with $p(t), q(t) \in \bbQ[t]$. Moreover, we have the following equalities:
\begin{eqnarray*}
\mathrm{deg}(Z(\mathrm{f};t)) & = & \mathrm{deg}(p(t)) - \mathrm{deg}(q(t)) \\
& = & \mathrm{deg}(\mathrm{det}(\id - t \,TP_-(\mathrm{f})_{1/p})) - \mathrm{deg}(\mathrm{det}(\id - t \,TP_+(\mathrm{f})_{1/p})) \\
& = & \mathrm{dim}\,TP_-(N\!\!M)_{1/p} - \mathrm{dim}\,TP_+(N\!\!M)_{1/p} \\
& = & -\mathrm{tr}(\id_{N\!\!M})\,.
\end{eqnarray*}
We now prove item (ii). The formal power series $Z(\mathrm{f};t)$ and $Z((\mathrm{f}^{-1})^\vee;t^{-1})$, as well as the categorical trace $\mathrm{tr}(\id_{N\!\!M})$, can be computed in $\NChow(k)_\bbQ$ or in the abelian semi-simple category $\NNum(k)_\bbQ$. Therefore, the functional equation \eqref{eq:functional} follows from \cite[Prop.~2.2 and Thm.~3.2]{Zeta}.

We now prove item (iii). Let us assume first that $k$ is perfect. Under this assumption, note that \eqref{eq:equation-main} (with $\mathrm{f}$ and $t$ replaced by $(\mathrm{f}^{-1})^\vee$ and $t^{-1}$, respectively) yields the following equality:
\begin{equation}\label{eq:equality1}
 Z((\mathrm{f}^{-1})^\vee;t^{-1}) = \frac{\mathrm{det}(\id - t^{-1} \,TP_-((\mathrm{f}^{-1})^\vee)_{1/p}\,|\, TP_-(N\!\!M^\vee)_{1/p})}{\mathrm{det}(\id - t^{-1}\, TP_+((\mathrm{f}^{-1})^\vee)_{1/p}\,|\, TP_+(N\!\!M^\vee)_{1/p})}\,.
\end{equation}
Since the object $N\!\!M \in \NChow(k)_\bbQ$ is dualizable, it is well-known that the evaluation pairing $\mathrm{ev}\colon N\!\!M \otimes N\!\!M^\vee \to U(k)_\bbQ$ is invariant under the endomorphism $\mathrm{f} \otimes (\mathrm{f}^{-1})^\vee$; consult \cite[\S3.2.3.6]{Rivano}. Consequently, by applying the symmetric monoidal functor \eqref{eq:TP1} (with $F=\bbQ$), we obtain the following commutative diagram of finite-dimensional $\bbZ/2$-graded $K$-vector spaces:
\begin{equation*}\label{eq:square2}
\xymatrix{
TP_\pm(N\!\!M)_{1/p} \otimes TP_\pm(N\!\!M)^\vee_{1/p} \ar[d]_-{TP_\pm(\mathrm{f})_{1/p} \otimes TP_\pm((\mathrm{f}^{-1})^\vee)_{1/p}} \ar[rrr]^-{TP_\pm(\mathrm{ev})_{1/p}} &&& TP_\pm(k)_{1/p} \ar@{=}[d] \\
TP_\pm(N\!\!M)_{1/p} \otimes TP_\pm(N\!\!M)^\vee_{1/p} \ar[rrr]_-{TP_\pm(\mathrm{ev})_{1/p}} &&& TP_\pm(k)_{1/p}\,.
}
\end{equation*}
The pairing $TP_\pm(\mathrm{ev})_{1/p}$ is perfect. Therefore, making use of \cite[App.~C, Lem.~4.3]{Hartshorne}, we observe that the numerator of the right-hand side of \eqref{eq:equality1}~is~equal~to 
$$\frac{(-1)^{\mathrm{dim}\, TP_-(N\!\!M)_{1/p}} \cdot t^{- \mathrm{dim}\, TP_-(N\!\!M)_{1/p}}}{\mathrm{det}(TP_-(\mathrm{f})_{1/p}\,|\, TP_-(N\!\!M)_{1/p})} \cdot \mathrm{det}(\id - t \,TP_-(\mathrm{f})_{1/p}\,|\,TP_-(N\!\!M)_{1/p})$$
and, similarly, that the denominator of the right-hand side of \eqref{eq:equality1} is equal to 
$$\frac{(-1)^{\mathrm{dim}\, TP_+(N\!\!M)_{1/p}} \cdot t^{- \mathrm{dim}\,TP_+(N\!\!M)_{1/p}}}{\mathrm{det}(TP_+(\mathrm{f})_{1/p}\,|\, TP_+(N\!\!M)_{1/p})} \cdot \mathrm{det}(\id - t \,TP_+(\mathrm{f})_{1/p}\,|\,TP_+(N\!\!M)_{1/p})\,.$$
Thanks to the equalities $\mathrm{dim}\,TP_+(N\!\!M)_{1/p} -  \mathrm{dim}\, TP_-(N\!\!M)_{1/p} = \mathrm{tr}(\id_{N\!\!M})$ and \eqref{eq:equation-main}, this yields the following functional equation in $K(t)$:
\begin{equation}\label{eq:functional2}
Z((\mathrm{f}^{-1})^\vee;t^{-1})  = (-t)^{\mathrm{tr}(\id_{N\!\!M})} \cdot \frac{\mathrm{det}(TP_+(\mathrm{f})_{1/p}\,|\,TP_+(N\!\!M)_{1/p})}{\mathrm{det}(TP_-(\mathrm{f})_{1/p}\,|\,TP_-(N\!\!M)_{1/p})} \cdot Z(\mathrm{f};t) \,.
\end{equation}
By comparing the functional equations \eqref{eq:functional} and \eqref{eq:functional2}, we hence obtain the searched equality \eqref{eq:equality-det} (in the case where $k$ is perfect). Finally, let us assume that $k$ is arbitrary. Thanks to the base-change functor \eqref{eq:closure} (with $F=\bbQ$), the formal power series $Z(\mathrm{f};t)$ and $Z((\mathrm{f}^{-1})^\vee;t)$ agree with $Z(\mathrm{f}\otimes_k k^{\mathrm{perf}};t)$ and $Z(((\mathrm{f}\otimes_k k^{\mathrm{perf}})^{-1})^\vee;t)$, respectively. Moreover, the categorical trace $\mathrm{tr}(\id_{N\!\!M})$ agrees with $\mathrm{tr}(\id_{N\!\!M\otimes_k k^{\mathrm{perf}}})$. Therefore, by comparing the functional equation \eqref{eq:functional} with the functional equation obtained from \eqref{eq:functional2} by replacing $N\!\!M$ and $\mathrm{f}$ by $N\!\!M\otimes_k k^{\mathrm{perf}}$ and $\mathrm{f}\otimes_k k^{\mathrm{perf}}$, respectively, we obtain the searched equality \eqref{eq:equality-det}.
\end{proof}
\begin{remark}
Note that in contrast with the proof of the functional equation \eqref{eq:functional}, the proof of the functional equation \eqref{eq:functional2} does {\em not} makes use of Kahn's work \cite{Zeta}.
\end{remark}
\begin{corollary}[Weil conjectures]\label{cor:zeta}
Let $k=\bbF_q$ be a finite field, $X$ a smooth proper $k$-scheme $X$, and $\mathrm{E}:=\langle \Delta, \Delta \rangle$ the self-intersection number of the diagonal of $X \times X$.
\begin{itemize}
\item[(i)] The zeta function $Z_X(t)$ of $X$ is rational. Moreover, $\mathrm{deg}(Z_X(t))=-\mathrm{E}$. 
\item[(ii)] We have the functional equation $Z_X(\frac{1}{q^{\mathrm{dim}(X)} t}) = \pm t^{\mathrm{E}} q^{\frac{\mathrm{dim}(X)}{2} \mathrm{E}} Z_X(t)$.
\end{itemize}
\end{corollary}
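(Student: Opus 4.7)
The plan is to apply Theorem \ref{thm:zeta} to the noncommutative Chow motive $N\!\!M := U(\perf_\dg(X))_\bbQ$ and the Frobenius endomorphism $\mathrm{f} := U(\mathrm{Fr}^\ast)_\bbQ$. By Example \ref{ex:classical}, the formal power series $Z(\mathrm{f};t)$ agrees with $Z_X(t)$, so both parts of the corollary amount to translations of Theorem \ref{thm:zeta} into classical language.

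For part (i), rationality is immediate from Theorem \ref{thm:zeta}(i). For the degree formula, I would combine the intermediate equation \eqref{eq:equation-main} from the proof of Theorem \ref{thm:zeta} with the Scholze isomorphism \eqref{eq:Scholze} to compute
\[
\mathrm{tr}(\id_{N\!\!M}) \;=\; \dim TP_+(N\!\!M)_{1/p} - \dim TP_-(N\!\!M)_{1/p} \;=\; \sum_i (-1)^i \dim H^i_{\mathrm{crys}}(X) \;=\; \chi(X),
\]
and invoke the classical Lefschetz self-intersection identity $\chi(X) = \langle \Delta, \Delta \rangle = \mathrm{E}$ (the self-intersection formula for the diagonal). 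This yields $\deg Z_X(t) = -\mathrm{tr}(\id_{N\!\!M}) = -\mathrm{E}$.

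For part (ii), I would apply Theorem \ref{thm:zeta}(ii) (the required invertibility holds after passing to $\NNum(k)_\bbQ$ by Theorem \ref{thm:main} combined with the invertibility of $\mathrm{Fr}^\ast$ on crystalline cohomology) to obtain the abstract functional equation
\[
Z((\mathrm{f}^{-1})^\vee; t^{-1}) \;=\; (-t)^{\mathrm{E}} \cdot \det(\mathrm{f}) \cdot Z_X(t).
\]
The remaining task is to match the two sides with the classical formula, and the key tool is crystalline Poincar\'e duality: the cup product yields a perfect pairing $H^i_{\mathrm{crys}}(X) \otimes H^{2d-i}_{\mathrm{crys}}(X) \to H^{2d}_{\mathrm{crys}}(X) \simeq K$ (with $d := \dim(X)$) on which Frobenius acts by $q^d$, so an eigenvalue $\alpha$ of $\mathrm{Fr}^\ast$ on $H^i_{\mathrm{crys}}(X)$ corresponds to an eigenvalue $q^d/\alpha$ on $H^{2d-i}_{\mathrm{crys}}(X)$. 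Pairing up the factors in the cohomological formula of Theorem \ref{thm:zeta}(iii) across the even/odd decomposition then yields $\det(\mathrm{f}) = \pm q^{d\mathrm{E}/2}$, and rewriting $Z((\mathrm{f}^{-1})^\vee; t^{-1})$ via the analogue of \eqref{eq:equation-main} together with the same duality gives $Z((\mathrm{f}^{-1})^\vee; t^{-1}) = Z_X(1/(q^d t))$; substituting both identifications into the abstract equation yields the desired functional equation.

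The main obstacle is this last cohomological identification: the categorical dual $N\!\!M^\vee = U(\perf_\dg(X)^{\op})_\bbQ$ corresponds under $TP_\pm$ to the dual crystalline cohomology spaces, and $TP_\pm((\mathrm{f}^{-1})^\vee)$ is the transpose-inverse of $\mathrm{Fr}^\ast$ acting on those duals; via the Poincar\'e pairing it must be matched with $q^d \cdot (\mathrm{Fr}^\ast)^{-1}$ on $H^{2d-i}_{\mathrm{crys}}(X)$, which is precisely the step where the classical input (Poincar\'e duality together with the $q^d$-scaling of $H^{2d}_{\mathrm{crys}}$) enters decisively into an argument that otherwise proceeds abstractly from Theorem \ref{thm:zeta}.
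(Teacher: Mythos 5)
Your proposal is correct and follows essentially the same route as the paper: part (i) is Theorem \ref{thm:zeta}(i) together with the identification $\mathrm{tr}(\id_{N\!\!M})=\mathrm{E}$, and part (ii) is Theorem \ref{thm:zeta}(ii) combined with the Scholze comparison \eqref{eq:Scholze} and crystalline Poincar\'e duality (the paper's citation of \cite[App.~C, Lem.~4.3]{Hartshorne}) to show that the determinant factor equals $\pm q^{\frac{\dim(X)}{2}\mathrm{E}}$ and that $Z((\mathrm{f}^{-1})^\vee;t^{-1})=Z_X(\frac{1}{q^{\dim(X)}t})$. The only cosmetic difference is that you compute $\mathrm{tr}(\id_{N\!\!M})=\mathrm{E}$ via the crystalline Euler characteristic rather than directly from the intersection-theoretic trace formula of Example \ref{ex:classical}.
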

\begin{proof}
Let $N\!\!M$ and $\mathrm{f}$ be as in Example \ref{ex:classical}. Since $Z(\mathrm{f};t)=Z_X(t)$ and $\mathrm{tr}(\id_{N\!\!M})=\mathrm{E}$, the proof of item (i) follows from Theorem \ref{thm:zeta}(i). In what concerns item (ii), note first that the natural isomorphism \eqref{eq:Scholze} yields the following equality:
\begin{equation}\label{eq:equality-dett}
\frac{\mathrm{det}(TP_+(\mathrm{f})_{1/p}\,|\,TP_+(N\!\!M)_{1/p})}{\mathrm{det}(TP_-(\mathrm{f})_{1/p}\,|\,TP_-(N\!\!M)_{1/p})}= \frac{\prod_{i \, \mathrm{even}}\mathrm{det}(H^i_{\mathrm{crys}}(\mathrm{Fr})\,|\, H^i_{\mathrm{crys}}(X))}{\prod_{i \, \mathrm{odd}}\mathrm{det}(H^i_{\mathrm{crys}}(\mathrm{Fr})\,|\, H^i_{\mathrm{crys}}(X))}\,.
\end{equation}
Making use of \cite[App.~C, Lem.~4.3]{Hartshorne}, we observe that the square of the right-hand side of \eqref{eq:equality-dett} is equal to $q^{\mathrm{dim}(X)\mathrm{E}}$. Consequently, $\eqref{eq:equality-dett}=\pm q^{\frac{\mathrm{dim}(X)}{2} \mathrm{E}}$. Since $Z((\mathrm{f}^{-1})^\vee; t^{-1})=Z_X(\frac{1}{q^{\mathrm{dim}(X)}t})$, item (ii) follows then from Theorem \ref{thm:zeta}(ii).
\end{proof}
Weil \cite{Weil} conjectured\footnote{Weil conjectured also in {\em loc. cit.} that the zeta function $Z_X(t)$ of $X$ satisfies the analogue of the Riemann hypothesis. This conjecture was proved by Deligne \cite{Deligne} in the seventies.} in the late forties that the zeta function $Z_X(t)$ of $X$ is rational and that it satisfies the functional equation $Z_X(\frac{1}{q^{\mathrm{dim}(X)} t}) = \pm t^{\mathrm{E}} q^{\frac{\mathrm{dim}(X)}{2} \mathrm{E}} Z_X(t)$. These conjectures were proved independently by Dwork \cite{Dwork} and Grothendieck \cite{Grothendieck} using $p$-adic analysis and \'etale cohomology, respectively. The above Corollary \ref{cor:zeta} provides us with an alternative proof of the Weil conjectures. Moreover, Theorem \ref{thm:zeta} (and Remark \ref{rk:zero}) establishes a far-reaching noncommutative generalization of Dwork's and Grothendieck's results. 
\begin{remark}[Related work]
Let $k=\bbF_q$ be a finite field. In a recent work \cite{NCWeil}, we developed a general theory of zeta functions of noncommutative Chow motives (without endomorphisms). As explained in {\em loc. cit.}, given a noncommutative Chow motive $N\!\!M \in \NChow(k)_\bbQ$, the finite-dimensional $K$-vector space $TP_0(N\!\!M)_{1/p}$, resp. $TP_1(N\!\!M)_{1/p}$, comes equipped with an automorphism $\mathrm{F}_0$, resp. $\mathrm{F}_1$, called the ``cyclotomic Frobenius''. Under these notations, the {\em even/odd zeta function of $N\!\!M$} is defined as the following formal power series:
\begin{eqnarray}
Z_{\mathrm{even}}(N\!\!M;t) & := & \mathrm{det}(\id - t\,\mathrm{F}_0\,|\,TP_0(N\!\!M)_{1/p})^{-1} \in K \llbracket t \rrbracket \label{eq:even}\\
Z_{\mathrm{odd}}(N\!\!M;t) & := & \mathrm{det}(\id - t\,\mathrm{F}_1\,|\,TP_1(N\!\!M)_{1/p})^{-1} \in K \llbracket t \rrbracket\,. \label{eq:odd}
\end{eqnarray}
As explained in {\em loc. cit.}, the cyclotomic Frobenius is {\em not} compatible with the $\bbZ/2$-graded structure of $TP_\ast(N\!\!M)_{1/p}$. Instead, we have canonical isomorphisms $\mathrm{F}_n\simeq q\cdot \mathrm{F}_{n+2}$ for every $n \in \bbZ$. For this reason, in the case of noncommutative Chow motives without endomorphisms, it is more natural to consider separately the even zeta function \eqref{eq:even} and the odd zeta function \eqref{eq:odd} (instead of a single zeta function \eqref{eq:equation-main} as in the case of noncommutative Chow motives with endomorphisms). Finally, as explained in \cite[Thm.~1.12]{NCWeil}, the even and odd zeta functions also satisfy appropriate functional equations.
\end{remark}
\subsection*{Exponential growth rate}
Let $N\!\!M \in \Chow(k)_\bbQ$ be a noncommutative Chow motive and $\mathrm{f}\colon N\!\!M \to N\!\!M$ an endomorphism. Consider the associated sequence of rational numbers $\{|\mathrm{tr}(\mathrm{f}^{\circ n})|\}_{n \geq 1}$. Its {\em exponential growth rate} is defined as follows: 
$$ \mathrm{rate}(\mathrm{f}):=\mathrm{limsup}_{n \to \infty} \frac{1}{n} \mathrm{log}(|\mathrm{tr}(\mathrm{f}^{\circ n})|) \in [-\infty, +\infty]\,.$$
\begin{example}[Calabi-Yau dg categories]
Let $N\!\!M$ and $\mathrm{f}$ (and $\cA$) be as in Example \ref{ex:CY}. In the case where $\cA$ is $d$-Calabi-Yau, the associated sequence $\{|\mathrm{tr}(\mathrm{f}^{\circ n})|\}_{n \geq 1}$ is constant. Similarly, in the case where $\cA$ is $\frac{d}{r}$-Calabi-Yau, the associated sequence $\{|\mathrm{tr}(\mathrm{f}^{\circ n})|\}_{n \geq 1}$ is periodic. Consequently, in both these cases, we have $\mathrm{rate}(\mathrm{f})=0$.
\end{example}
\begin{notation}\label{not:endomorphisms}
Given an embedding $\iota\colon K \hookrightarrow \bbC$, consider the induced endomorphism of finite-dimensional $\bbZ/2$-graded $\bbC$-vector spaces:
$$ \theta_\pm(\mathrm{f}; \iota):= TP_\pm(\mathrm{f})_{1/p} \otimes_{K, \iota} \bbC \colon TP_\pm(N\!\!M)_{1/p}\otimes_{K, \iota} \bbC \too TP_\pm(N\!\!M)_{1/p}\otimes_{K, \iota} \bbC\,.$$
Let $\rho_+(\mathrm{f}; \iota)$, resp. $\rho_-(\mathrm{f};\iota)$, be the spectral radius of $\theta_+(\mathrm{f}; \iota)$, resp. $\theta_-(\mathrm{f}; \iota)$, and $\rho(\mathrm{f};\iota):=\mathrm{max}\{\rho_+(\mathrm{f};\iota), \rho_-(\mathrm{f};\iota)\}$. 
\end{notation}
\begin{lemma}\label{lem:bound}
Given an embedding $\iota\colon K \hookrightarrow \bbC$, we have the following inequalities:
\begin{eqnarray*}
|\mathrm{tr}(\mathrm{f}^{\circ n})| \leq (\mathrm{dim}\,TP_+(N\!\!M)_{1/p} + \mathrm{dim}\,TP_-(N\!\!M)_{1/p})\cdot \rho(\mathrm{f};\iota)^n && n \geq 1\,.
\end{eqnarray*}
\end{lemma}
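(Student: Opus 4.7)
The plan is to translate the categorical trace $\mathrm{tr}(\mathrm{f}^{\circ n}) \in \bbQ$ into an ordinary super-trace of matrices via Theorem \ref{thm:TP}, and then invoke the elementary bound on the trace of a power of a complex matrix in terms of its spectral radius.

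First I would reduce to the case where $k$ is perfect; for general $k$, the base-change functor \eqref{eq:closure} is $\bbQ$-linear symmetric monoidal and so preserves categorical traces, which leaves both sides of the claimed inequality unchanged. Once $k$ is perfect, Theorem \ref{thm:TP} provides the $\bbQ$-linear symmetric monoidal functor $TP_\pm(-)_{1/p}\colon \NChow(k)_\bbQ \to \mathrm{vect}_{\bbZ/2}(K)$. Any symmetric monoidal functor between rigid symmetric monoidal categories preserves categorical traces, and in $\mathrm{vect}_{\bbZ/2}(K)$ the categorical trace of a grading-preserving endomorphism is the ordinary super-trace. Consequently,
$$\mathrm{tr}(\mathrm{f}^{\circ n}) \;=\; \mathrm{tr}\bigl(TP_+(\mathrm{f})_{1/p}^n\bigr) \,-\, \mathrm{tr}\bigl(TP_-(\mathrm{f})_{1/p}^n\bigr) \;\in\; K,$$
and applying $\iota\colon K\hookrightarrow \bbC$ this identity becomes $\mathrm{tr}(\theta_+(\mathrm{f};\iota)^n) - \mathrm{tr}(\theta_-(\mathrm{f};\iota)^n)$ in $\bbC$.

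Next I would invoke the classical linear-algebra fact that if $A$ is an endomorphism of a $d$-dimensional $\bbC$-vector space with eigenvalues $\lambda_1,\ldots,\lambda_d$ (counted with algebraic multiplicity), then $\mathrm{tr}(A^n)=\sum_i \lambda_i^n$, whence
$$\bigl|\mathrm{tr}(A^n)\bigr| \;\leq\; \sum_{i=1}^d |\lambda_i|^n \;\leq\; d\cdot \rho(A)^n.$$
Applied separately to $A=\theta_+(\mathrm{f};\iota)$ and $A=\theta_-(\mathrm{f};\iota)$ on the spaces $TP_\pm(N\!\!M)_{1/p}\otimes_{K,\iota}\bbC$ of dimensions $\mathrm{dim}\,TP_\pm(N\!\!M)_{1/p}$, together with the triangle inequality and the inequality $\rho_\pm(\mathrm{f};\iota)\leq \rho(\mathrm{f};\iota)$, this immediately gives the claimed bound.

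There is no serious obstacle in this argument: it is a direct unwinding of the super-trace compatibility afforded by Theorem \ref{thm:TP} combined with a standard spectral-radius estimate. The only mild care required is bookkeeping between the two ``coefficient fields'' $K$ (where the super-trace identity naturally lives) and $\bbC$ (where the spectral radius is measured), which is handled by the fixed embedding $\iota$.
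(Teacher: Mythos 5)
Your proposal is correct and follows essentially the same route as the paper: apply the symmetric monoidal functor $TP_\pm(-)_{1/p}$ of Theorem \ref{thm:TP} to convert the categorical trace into a super-trace, then bound each of $|\mathrm{tr}(\theta_\pm(\mathrm{f}^{\circ n};\iota))|$ by $d_\pm\cdot\rho_\pm(\mathrm{f};\iota)^n$ via the eigenvalue expansion and the triangle inequality. The only (harmless) difference is that you spell out the reduction to perfect $k$, which the paper leaves implicit.
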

\begin{proof}
In order to simplify the exposition, let us write $d_+$, resp. $d_-$, for the dimension of the $K$-vector space $TP_+(N\!\!M)_{1/p}$, resp. $TP_-(N\!\!M)_{1/p}$, and $d:=d_++d_-$. Under these notations, the proof follows then from the following (in)equalities
\begin{eqnarray}
|\mathrm{tr}(\mathrm{f}^{\circ n})| & = & |\mathrm{tr}(\theta_\pm(\mathrm{f}^{\circ n}; \iota))| \label{eq:star1}\\
& = & |\mathrm{tr}(\theta_+(\mathrm{f}^{\circ n}; \iota)) - \mathrm{tr}(\theta_-(\mathrm{f}^{\circ n}; \iota))| \nonumber \\
& \leq & |\mathrm{tr}(\theta_+(\mathrm{f}^{\circ n}; \iota))| + |\mathrm{tr}(\theta_-(\mathrm{f}^{\circ n}; \iota))| \nonumber \\
& = & |\lambda^n_{+, 1}+ \cdots + \lambda^n_{+, d_+}| + |\lambda^n_{-, 1}+ \cdots + \lambda^n_{-, d_-}| \nonumber \\
& \leq & |\lambda_{+, 1}|^n + \cdots + |\lambda_{+, d_+}|^n + |\lambda_{-, 1}|^n+ \cdots + |\lambda_{-, d_-}|^n \nonumber \\
& \leq & d_+ \cdot \rho_+(\mathrm{f};\iota)^n + d_- \cdot \rho_-(\mathrm{f};\iota)^n \nonumber \\
& \leq & d\cdot \rho(\mathrm{f};\iota)^n\,, \nonumber
\end{eqnarray}
where the equality \eqref{eq:star1} is a consequence of Theorem \ref{thm:TP} and $\{\lambda_{+,1}, \ldots, \lambda_{+, d_+}\}$, resp. $\{\lambda_{-,1}, \ldots, \lambda_{-, d_-}\}$, stands for the eigenvalues (with multiplicity) of the endomorphism $\theta_+(\mathrm{f};\iota)$, resp. $\theta_-(\mathrm{f};\iota)$.
\end{proof}
Note that Lemma \ref{lem:bound} yields automatically the following upper bound:
\begin{corollary}
Given an embedding $\iota\colon K \hookrightarrow \bbC$, we have $ \mathrm{rate}(\mathrm{f}) \leq \mathrm{log}(\rho(\mathrm{f};\iota))$.
\end{corollary}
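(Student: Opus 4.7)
The claim is a direct consequence of Lemma \ref{lem:bound}, so the plan is essentially to take logarithms and pass to the $\limsup$. First I would set $d := \mathrm{dim}\,TP_+(N\!\!M)_{1/p} + \mathrm{dim}\,TP_-(N\!\!M)_{1/p}$ and recall the inequality $|\mathrm{tr}(\mathrm{f}^{\circ n})| \leq d \cdot \rho(\mathrm{f};\iota)^n$ for every $n \geq 1$ established in Lemma \ref{lem:bound}. Since $d$ is a fixed positive integer (independent of $n$), taking logarithms on both sides yields $\log(|\mathrm{tr}(\mathrm{f}^{\circ n})|) \leq \log(d) + n\cdot \log(\rho(\mathrm{f};\iota))$, and dividing through by $n$ gives
\[
\tfrac{1}{n}\log(|\mathrm{tr}(\mathrm{f}^{\circ n})|) \leq \tfrac{\log(d)}{n} + \log(\rho(\mathrm{f};\iota)).
\]

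Then I would take $\limsup_{n \to \infty}$ of both sides. The term $\log(d)/n$ tends to $0$, so the right-hand side tends to $\log(\rho(\mathrm{f};\iota))$, and by definition of $\mathrm{rate}(\mathrm{f})$ the left-hand side has $\limsup$ equal to $\mathrm{rate}(\mathrm{f})$; this produces the asserted inequality.

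The only subtle point is the degenerate case $\rho(\mathrm{f};\iota) = 0$, which would force $\log(\rho(\mathrm{f};\iota)) = -\infty$. In that situation both endomorphisms $\theta_+(\mathrm{f};\iota)$ and $\theta_-(\mathrm{f};\iota)$ are nilpotent, so there exists $N$ with $\theta_\pm(\mathrm{f}^{\circ n};\iota) = 0$ for $n \geq N$; using the equality $|\mathrm{tr}(\mathrm{f}^{\circ n})| = |\mathrm{tr}(\theta_\pm(\mathrm{f}^{\circ n};\iota))|$ from \eqref{eq:star1}, we obtain $\mathrm{tr}(\mathrm{f}^{\circ n}) = 0$ eventually, whence $\mathrm{rate}(\mathrm{f}) = -\infty$ under the standard convention $\log(0) = -\infty$. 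Thus the inequality holds in all cases, and no further ingredient beyond Lemma \ref{lem:bound} is needed; indeed, there is no real obstacle here since the corollary is just the asymptotic reformulation of the bound already proved.
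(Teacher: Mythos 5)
Your proof is correct and is exactly the argument the paper has in mind: the paper gives no proof at all, asserting that the corollary follows ``automatically'' from Lemma \ref{lem:bound}, and your derivation (take logarithms, divide by $n$, pass to the $\limsup$, with the conventional treatment of $\log 0=-\infty$ in the degenerate cases) is the standard way to make that automatic deduction explicit.
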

\begin{proposition}\label{prop:radius}
Assume that the endomorphisms $\theta_+(\mathrm{f};\iota)$ and $\theta_-(\mathrm{f};\iota)$ have different eigenvalues, with multiplicities, on the circle $\{\lambda \in \bbC\,|\, |\lambda| = \rho(\mathrm{f}; \iota)\}$. Under this assumption, the equality $\mathrm{rate}(\mathrm{f})=\mathrm{log}(\rho(\mathrm{f}; \iota))$ holds.
\end{proposition}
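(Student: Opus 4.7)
The plan is to establish the reverse of the already-known bound $\mathrm{rate}(\mathrm{f}) \leq \log(\rho(\mathrm{f};\iota))$ by isolating the contribution to $\mathrm{tr}(\mathrm{f}^{\circ n})$ coming from the eigenvalues on the maximal-modulus circle and showing it does not decay. Writing $\rho:=\rho(\mathrm{f};\iota)$ and combining the identity \eqref{eq:star1} with the eigenvalue expansion of $\theta_\pm(\mathrm{f}^{\circ n};\iota)$, I would first collect like terms across the two spectra in order to obtain
$$\mathrm{tr}(\mathrm{f}^{\circ n}) \;=\; \sum_\alpha m_\alpha\, \mu_\alpha^n,$$
where the $\mu_\alpha$ are the finitely many distinct complex numbers appearing in the union of the spectra of $\theta_+(\mathrm{f};\iota)$ and $\theta_-(\mathrm{f};\iota)$ and $m_\alpha \in \bbZ$ is the multiplicity of $\mu_\alpha$ as an eigenvalue of $\theta_+(\mathrm{f};\iota)$ minus its multiplicity as an eigenvalue of $\theta_-(\mathrm{f};\iota)$. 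The standing hypothesis on the two spectra on the circle $\{|\lambda|=\rho\}$ translates into the existence of at least one index $\alpha_0$ with $|\mu_{\alpha_0}|=\rho$ and $m_{\alpha_0}\neq 0$.

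Next I would split the expansion as $\mathrm{tr}(\mathrm{f}^{\circ n}) = \rho^n\, g(n) + h(n)$, where
$$g(n) := \sum_{|\mu_\alpha|=\rho} m_\alpha (\mu_\alpha/\rho)^n, \qquad h(n) := \sum_{|\mu_\alpha|<\rho} m_\alpha \mu_\alpha^n.$$
Since only finitely many $\mu_\alpha$ are involved, there is $\varepsilon>0$ with $|h(n)| = O((\rho-\varepsilon)^n)$, so $h(n)/\rho^n \to 0$ and the problem reduces to showing $\limsup_n |g(n)| > 0$.

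The non-decay of the almost-periodic sum $g(n)$ is the main obstacle. I would address it by a second-moment/Ces\`aro argument: writing each dominant ratio as $\mu_\alpha/\rho = e^{i\theta_\alpha}$ with the $\theta_\alpha$ pairwise distinct in $[0,2\pi)$, one expands
$$|g(n)|^2 \;=\; \sum_{\alpha,\beta} m_\alpha \overline{m_\beta}\, e^{in(\theta_\alpha-\theta_\beta)}$$
and averages over $n=1,\ldots,N$. The diagonal terms contribute $\sum_\alpha |m_\alpha|^2$ per $n$, while for $\alpha\neq\beta$ the geometric-series bound $\bigl|\sum_{n=1}^N e^{in(\theta_\alpha-\theta_\beta)}\bigr| = O(1)$ yields a bounded total contribution. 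Hence $\frac{1}{N}\sum_{n=1}^N |g(n)|^2 \to \sum_\alpha |m_\alpha|^2 > 0$, which forces $\limsup_n |g(n)|^2 \geq \sum_\alpha |m_\alpha|^2 > 0$.

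Combining these pieces, $|\mathrm{tr}(\mathrm{f}^{\circ n})|/\rho^n = |g(n) + h(n)/\rho^n|$ has strictly positive limsup, so $\limsup_n \tfrac{1}{n}\log|\mathrm{tr}(\mathrm{f}^{\circ n})| \geq \log\rho$. Together with the corollary preceding the proposition, this yields the desired equality $\mathrm{rate}(\mathrm{f}) = \log(\rho(\mathrm{f};\iota))$.
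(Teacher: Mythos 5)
Your argument is correct, but it takes a genuinely more self-contained route than the paper's. The paper's entire proof consists of invoking \cite[Lem.~2.8]{KK}, which asserts precisely that under the stated hypothesis on the dominant eigenvalues one has $\rho(\mathrm{f};\iota)=\limsup_{n\to\infty}|\mathrm{tr}(\theta_+(\mathrm{f}^{\circ n};\iota))-\mathrm{tr}(\theta_-(\mathrm{f}^{\circ n};\iota))|^{1/n}$, and then taking logarithms via the identity $\mathrm{tr}(\mathrm{f}^{\circ n})=\mathrm{tr}(\theta_+(\mathrm{f}^{\circ n};\iota))-\mathrm{tr}(\theta_-(\mathrm{f}^{\circ n};\iota))$. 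Your proof reconstitutes the content of that cited lemma: the splitting $\mathrm{tr}(\mathrm{f}^{\circ n})=\rho^n g(n)+h(n)$ with $h(n)/\rho^n\to 0$ is routine, and the genuinely nontrivial point --- that the almost-periodic dominant part $g(n)$ does not decay --- is handled correctly by your Ces\`aro second-moment computation, whose diagonal term $\sum_\alpha m_\alpha^2$ is strictly positive exactly because the hypothesis guarantees a nonzero net multiplicity $m_{\alpha_0}$ on the circle of radius $\rho(\mathrm{f};\iota)$. What your route buys is self-containedness and a transparent view of where the hypothesis enters; what the paper's route buys is brevity. One cosmetic caveat: the normalization $\mu_\alpha/\rho$ tacitly assumes $\rho(\mathrm{f};\iota)>0$, which is harmless here since in the degenerate case $\rho(\mathrm{f};\iota)=0$ both endomorphisms are nilpotent, $\mathrm{tr}(\mathrm{f}^{\circ n})=0$ for all $n\geq 1$, and both sides of the asserted equality are $-\infty$.
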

\begin{proof}
Following \cite[Lem.~2.8]{KK}, we have the equality:
$$\rho(\mathrm{f};\iota) = \mathrm{limsup}_{n \to \infty} |\mathrm{tr}(\theta_+(\mathrm{f}^{\circ n}; \iota)) - \mathrm{tr}(\theta_-(\mathrm{f}^{\circ n};\iota))|^{\frac{1}{n}}\,.$$
Therefore, since $\mathrm{tr}(\mathrm{f}^{\circ n})= \mathrm{tr}(\theta_+(\mathrm{f}^{\circ n};\iota))-\mathrm{tr}(\theta_-(\mathrm{f}^{\circ n};\iota))$, we conclude that
\begin{equation}\label{eq:3-equalities}
\rho(\mathrm{f};\iota)= \mathrm{limsup}_{n \to \infty} |\mathrm{tr}(\mathrm{f}^{\circ n})|^{\frac{1}{n}} = \mathrm{exp}(\mathrm{limsup}_{n \to \infty} \frac{1}{n} \mathrm{log} (|\mathrm{tr}(\mathrm{f}^{\circ n})|))\,.
\end{equation}
By applying $\mathrm{log}(-)$ to \eqref{eq:3-equalities}, we hence obtain the searched equality.
\end{proof}
\begin{example}[Classical zeta functions]\label{ex:classical2}
Let $N\!\!M$ and $\mathrm{f}$ (and $X$) be as in Example \ref{ex:classical}. Thanks to the natural isomorphism \eqref{eq:Scholze} and to Deligne's proof \cite{Deligne} of the analogue of the Riemann hypothesis (consult also Katz-Messing \cite{KM}), we have $\rho_+(\mathrm{f};\iota)\neq \rho_-(\mathrm{f};\iota)$ and $\rho(\mathrm{f};\iota)= q^{\mathrm{dim}(X)}$ (independently of the embedding $\iota$). Therefore, making use of Proposition \ref{prop:radius}, we conclude that $\mathrm{rate}(\mathrm{f})= \mathrm{log}(q^{\mathrm{dim}(X)})$.
\end{example}
\begin{example}[Classical non-abelian $L$-functions]\label{ex:L-functions2}
Let $N\!\!M$ and $\mathrm{f}$ (and $G$, $X$ and $\chi$) be as in Example \ref{ex:L-functions}. Similarly to Example \ref{ex:classical2}, we have $\rho_+(\mathrm{f};\iota)\neq \rho_-(\mathrm{f};\iota)$ and $\rho(\mathrm{f};\iota)\leq  q^{\mathrm{dim}(X)}$ (independently of the embedding $\iota$). Therefore, making use of Proposition \ref{prop:radius}, we conclude that $\mathrm{rate}(\mathrm{f})\leq\mathrm{log}(q^{\mathrm{dim}(X)})$.
\end{example}
\begin{example}[Orbifolds]
Let $N\!\!M$ and $\mathrm{f}$ (and $G$ and $X$) be as in Example \ref{ex:orbifolds}. Similarly to Example \ref{ex:L-functions2}, $\rho_+(\mathrm{f};\iota)\neq \rho_-(\mathrm{f};\iota)$ and $\rho(\mathrm{f};\iota)\leq \mathrm{max}\{q^{\mathrm{dim}(X^g)}\,|\,g \in G\!/\!\!\!\sim\}$ (independently of the embedding $\iota$). Therefore, making use of Proposition \ref{prop:radius}, we conclude that $\mathrm{rate}(\mathrm{f})\leq\mathrm{max}\{\mathrm{log}(q^{\mathrm{dim}(X^g)})\,|\,g \in G\!/\!\!\!\sim\}$.
\end{example}
\section{Hasse-Weil zeta functions of endomorphisms}\label{sec:app-2}
In this section we assume that $k=\bbF_q$ is a finite field. Let $N\!\!M \in \NChow(k)_\bbQ$ be a noncommutative Chow motive and $\mathrm{f}\colon N\!\!M \to N\!\!M$ an endomorphism. The {\em Hasse-Weil zeta function of $\mathrm{f}$} is defined as the following Dirichlet series:
\begin{equation}\label{eq:function}
\zeta(\mathrm{f};s) := Z(\mathrm{f};q^{-s})\,.
\end{equation}
\begin{proposition}\label{prop:convergence}
Given an embedding $\iota\colon K\hookrightarrow \bbC$, there exists a positive real number $\rho(\mathrm{f};\iota)>0$ (consult Notation \ref{not:endomorphisms}) for which the Dirichlet series $Z(\mathrm{f};q^{-s})$ converges (absolutely) in the half-plane $\mathrm{Re}(s)>\frac{\mathrm{log}(\rho(\mathrm{f};\iota))}{\mathrm{log}(q)}$.
\end{proposition}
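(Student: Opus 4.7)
The plan is to reduce convergence of the Dirichlet series $Z(\mathrm{f};q^{-s})$ to absolute convergence of its logarithm, and then invoke Lemma \ref{lem:bound} to obtain the desired half-plane of convergence.

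First, I would take $\log$ of the defining formula \eqref{eq:zeta} to write
\begin{equation*}
\log Z(\mathrm{f};q^{-s}) \,=\, \sum_{n\geq 1}\mathrm{tr}(\mathrm{f}^{\circ n})\,\frac{q^{-ns}}{n}\,,
\end{equation*}
which is naturally a Dirichlet series in $s$ indexed by the powers $q^n$. The point is that absolute convergence of this series in some half-plane $\mathrm{Re}(s)>\sigma_0$ implies absolute convergence of its exponential $Z(\mathrm{f};q^{-s})$ on the same half-plane, because $\exp$ is entire and preserves absolute convergence of (iteratively multiplied) Dirichlet expansions.

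Next, fix the embedding $\iota\colon K\hookrightarrow \bbC$ and apply Lemma \ref{lem:bound}: with $d:=\mathrm{dim}\,TP_+(N\!\!M)_{1/p}+\mathrm{dim}\,TP_-(N\!\!M)_{1/p}$ we obtain $|\mathrm{tr}(\mathrm{f}^{\circ n})|\leq d\cdot \rho(\mathrm{f};\iota)^n$ for every $n\geq 1$. Consequently,
\begin{equation*}
\sum_{n\geq 1}\frac{|\mathrm{tr}(\mathrm{f}^{\circ n})|}{n}\,q^{-n\,\mathrm{Re}(s)} \,\leq\, d\sum_{n\geq 1}\frac{\bigl(\rho(\mathrm{f};\iota)\,q^{-\mathrm{Re}(s)}\bigr)^{n}}{n}\,,
\end{equation*}
and the right-hand side is the standard logarithmic series, convergent precisely when $\rho(\mathrm{f};\iota)\cdot q^{-\mathrm{Re}(s)}<1$, equivalently when $\mathrm{Re}(s)>\mathrm{log}(\rho(\mathrm{f};\iota))/\mathrm{log}(q)$.

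Combining these two steps yields the proposition. The only subtlety is the trivial case where $\rho(\mathrm{f};\iota)=0$: here $\theta_{\pm}(\mathrm{f};\iota)$ are both nilpotent, so by Theorem \ref{thm:TP} all traces $\mathrm{tr}(\mathrm{f}^{\circ n})$ vanish and $Z(\mathrm{f};t)=1$ converges trivially on all of $\bbC$; one may then replace $\rho(\mathrm{f};\iota)$ by any positive real to match the statement. There is no substantive obstacle here — the proposition is essentially a direct corollary of Lemma \ref{lem:bound}, the real content having already been absorbed into the crystalline/$TP$-cohomological bound on the traces.
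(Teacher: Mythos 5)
Your proposal is correct and follows essentially the same route as the paper: both reduce to the auxiliary logarithmic series $\sum_{n\geq 1}\mathrm{tr}(\mathrm{f}^{\circ n})q^{-ns}/n$, note that exponentiating preserves the half-plane of (absolute) convergence, and then invoke Lemma \ref{lem:bound} to control the traces. The only cosmetic difference is that you bound the series directly by comparison with $\sum_n x^n/n$, whereas the paper phrases the same estimate through the Cauchy--Hadamard formula for the radius of convergence; your explicit treatment of the degenerate case $\rho(\mathrm{f};\iota)=0$ is a harmless extra.
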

\begin{proof}
Consider the (auxiliar) formal power series $Z'(\mathrm{f};t):=\sum_{n\geq 1} \mathrm{tr}(\mathrm{f}^{\circ n})\frac{t^n}{n}$. Note that, by definition, we have $Z(\mathrm{f};t) = \mathrm{exp}(Z'(\mathrm{f};t))$. It follows from the classical theory of Dirichlet series that the abscissa of convergence of the Dirichlet series $Z(\mathrm{f};q^{-s})$ is less than or equal to the abscissa of convergence of the (auxiliar) Dirichlet series $Z'(\mathrm{f};q^{-s})$. Moreover, the later abscissa of convergence is equal to $-\frac{\mathrm{log}(\mathrm{rad}(Z'(\mathrm{f};t)))}{\mathrm{log}(q)}$, where $\mathrm{rad}(Z'(\mathrm{f};t))$ stands for the radius of convergence of $Z'(\mathrm{f};t)$. Therefore, in order to conclude the proof, it suffices to show that 
\begin{equation}\label{eq:inequality}
-\frac{\mathrm{log}(\mathrm{rad}(Z'(\mathrm{f};t)))}{\mathrm{log}(q)}\leq \frac{\mathrm{log}(\rho(\mathrm{f};\iota))}{\mathrm{log}(q)}\,.
\end{equation}
In this regards, we have the following (in)equalities: 
\begin{eqnarray}
\frac{1}{\mathrm{rad}(Z'(\mathrm{f};t))} & = & \mathrm{limsup}_{n\to \infty} |\frac{\mathrm{tr}(\mathrm{f}^{\circ n})}{n}|^{\frac{1}{n}} \label{eq:star11} \\
& \leq & \mathrm{limsup}_{n\to \infty} |\mathrm{tr}(\mathrm{f}^{\circ n})|^{\frac{1}{n}} \nonumber \\
& \leq & \mathrm{limsup}_{n\to \infty} (d^{\frac{1}{n}} \cdot \rho(\mathrm{f};\iota)) \label{eq:star22} \\
& = & \rho(\mathrm{f};\iota)\,, \nonumber
\end{eqnarray}
where \eqref{eq:star11} is the Cauchy-Hadamard theorem and \eqref{eq:star22} follows from Lemma \ref{lem:bound} (similarly to the proof of Lemma \ref{lem:bound}, $d:=\mathrm{dim}\,TP_+(N\!\!M)_{1/p} + \mathrm{dim}\,TP_-(N\!\!M)_{1/p}$). This clearly yields the searched inequality \eqref{eq:inequality}.
\end{proof}
By combining Proposition \ref{prop:convergence} with Theorem \ref{thm:zeta}(i), we observe that the Hasse-Weil zeta function $\zeta(\mathrm{f};s)$ admits a (unique) meromorphic continuation to the entire complex plane. Moreover, by definition, we have $\zeta(\mathrm{f}; s + \frac{2\pi i}{\mathrm{log}(q)})=\zeta(\mathrm{f};s)$.
\begin{example}[Classical Hasse-Weil zeta functions]\label{ex:Hasse}
Let $N\!\!M$ and $\mathrm{f}$ (and $X$) be as in Example \ref{ex:classical}. Thanks to the natural isomorphism \eqref{eq:Scholze} and to Deligne's proof \cite{Deligne} of the analogue of the Riemann hypothesis (consult also Katz-Messing \cite{KM}), we have $\rho(\mathrm{f};\iota)=q^{\mathrm{dim}(X)}$ (independently of the embedding $\iota$). Moreover, it is well-know that we have an equality of formal power series $Z_X(t)= \prod_{x \in X} \frac{1}{1- t^{\mathrm{deg}(x)}}$. Consequently, in this particular case, the meromorphic function \eqref{eq:function} reduces to the classical Hasse-Weil zeta function $\zeta_X(s):= \prod_{x \in X} \frac{1}{1- (q^{\mathrm{deg}(x)})^{-s}}$ (with $\mathrm{Re}(s)>\mathrm{dim}(X)$).
\end{example}
\begin{example}[Classical Artin $L$-functions]\label{ex:Artin}
Let $N\!\!M$ and $\mathrm{f}$ (and $G$, $X$ and $\chi$) be as in Example \ref{ex:L-functions}. Similarly to Example \ref{ex:Hasse}, we have $\rho(\mathrm{f};\iota)\leq q^{\mathrm{dim}(X)}$ (independently of the embedding $\iota$). Moreover, it is well-known that we have an equality of formal power series $L_{X,G,\chi}(t)= \prod_{y \in X/G} \frac{1}{\mathrm{det}(\id - (\frac{1}{|I_x|}\sum_{g \in D_x, g \mapsto \mathrm{fr}_x}\mathrm{rep}_\chi(g))t^{\mathrm{deg}(y)})}$, where $x \in X$ is a(ny) pre-image of $y$, $D_x \subset G$ is the stabilizer of $x$, $I_x$ is the kernel of the canonical surjection $D_x \twoheadrightarrow \mathrm{Gal}(\kappa(x)/\kappa(y))$, $\mathrm{fr}_x$ is the Frobenius element of the Galois group $\mathrm{Gal}(\kappa(x)/\kappa(y))$, and $\mathrm{rep}_\chi$ is the complex representation associated to the character $\chi$; consult Serre \cite[\S2.2]{Serre} for further details. Consequently, in this particular case, the meromorphic function \eqref{eq:function} reduces to the classical (Emil) Artin $L$-function (with $\mathrm{Re}(s)>\mathrm{dim}(X)$):
$$
\cL_{X, G, \chi}(s) := \prod_{y \in X/G} \frac{1}{\mathrm{det}(\id - (\frac{1}{|I_x|}\sum_{g \in D_x, g \mapsto \mathrm{fr}_x}\mathrm{rep}_\chi(g))(q^{\mathrm{deg}(y)})^{-s})} \,.
$$
\end{example}
\begin{example}[Orbifolds]
Let $N\!\!M$ and $\mathrm{f}$ (and $G$ and $X$) be as in Example \ref{ex:orbifolds}. In this particular case, the meromorphic function \eqref{eq:function} reduces to the product of (Emil) Artin $L$-functions $\prod_{g \in G/\!\sim} \cL_{X^g, C(g)}(s)$.
\end{example}
\subsection*{Functional equation}
Note that Theorem \ref{thm:zeta}(ii) yields the functional equation:
\begin{corollary}\label{cor:functional}
When $\mathrm{f}$ is invertible, we have the following functional equation between meromorphic functions $ \zeta((\mathrm{f}^{-1})^\vee; -s) = -(q^{-s})^{\mathrm{tr}(\id_{N\!\!M})}\cdot \mathrm{det}(\mathrm{f}) \cdot \zeta(\mathrm{f};s)$.
\end{corollary}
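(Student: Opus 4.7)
The proof is essentially a direct change of variables in the functional equation of Theorem~\ref{thm:zeta}(ii), combined with the definition \eqref{eq:function} of the Hasse-Weil zeta function.

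The plan is to start from the functional equation in $\bbQ(t)$ provided by Theorem~\ref{thm:zeta}(ii):
\begin{equation*}
Z((\mathrm{f}^{-1})^\vee;t^{-1}) \;=\; (-t)^{\mathrm{tr}(\id_{N\!\!M})}\cdot \mathrm{det}(\mathrm{f}) \cdot Z(\mathrm{f};t)\,,
\end{equation*}
which is valid since $\mathrm{f}$ is invertible. I would then substitute $t=q^{-s}$. On the left-hand side, this turns $t^{-1}$ into $q^{s}=q^{-(-s)}$, so by the definition \eqref{eq:function} one has $Z((\mathrm{f}^{-1})^\vee;t^{-1})=\zeta((\mathrm{f}^{-1})^\vee;-s)$. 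On the right-hand side, the factor $(-t)^{\mathrm{tr}(\id_{N\!\!M})}$ becomes $-(q^{-s})^{\mathrm{tr}(\id_{N\!\!M})}$ (absorbing the sign), $\mathrm{det}(\mathrm{f})$ is a rational constant independent of $s$, and $Z(\mathrm{f};t)$ becomes $\zeta(\mathrm{f};s)$ again by \eqref{eq:function}.

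The only subtlety is the legitimacy of the substitution: both sides of Theorem~\ref{thm:zeta}(ii) are a priori equalities of rational functions (equivalently, of formal power series), whereas the statement of the corollary asserts an equality of meromorphic functions on $\bbC$. This is resolved by the paragraph immediately preceding the corollary: Proposition~\ref{prop:convergence} together with Theorem~\ref{thm:zeta}(i) already shows that $\zeta(\mathrm{f};s)$ and $\zeta((\mathrm{f}^{-1})^\vee;s)$ admit (unique) meromorphic continuations to the whole complex plane, since they equal $Z(\mathrm{f};q^{-s})$ and $Z((\mathrm{f}^{-1})^\vee;q^{-s})$ for rational $Z(-;t)$. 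Hence the rational-function identity obtained by substitution lifts uniquely to an identity of meromorphic functions on $\bbC$, completing the proof.

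There is no real obstacle here; the work has all been done in Theorem~\ref{thm:zeta}(ii) and Proposition~\ref{prop:convergence}, and the corollary amounts to the book-keeping of the change of variable $t\leftrightarrow q^{-s}$ (in particular, noting that $s\mapsto -s$ on the Hasse-Weil side corresponds to $t\mapsto t^{-1}$ on the zeta-of-endomorphism side).
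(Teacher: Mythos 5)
Your proof is correct and coincides with the paper's, which offers no argument beyond the remark that the corollary is obtained from Theorem~\ref{thm:zeta}(ii) by the substitution $t=q^{-s}$; your additional care about passing from the formal/rational identity to an identity of meromorphic functions via Proposition~\ref{prop:convergence} is exactly the right justification. (The only cosmetic point is that $(-q^{-s})^{\mathrm{tr}(\id_{N\!\!M})}$ equals $-(q^{-s})^{\mathrm{tr}(\id_{N\!\!M})}$ only up to the sign $(-1)^{\mathrm{tr}(\id_{N\!\!M})}$, a looseness already present in the paper's statement rather than in your argument.)
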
 
Intuitively speaking, Corollary \ref{cor:functional} describes a ``symmetry'' between the Hasse-Weil zeta functions $\zeta(\mathrm{f};s)$ and $\zeta((\mathrm{f}^{-1})^\vee;s)$ along the vertical line $\mathrm{Re}(s)=0$.
\subsection*{Cohomological interpretation}
Given an embedding $\iota\colon K \hookrightarrow \bbC$, consider the induced endomorphism of finite-dimensional $\bbZ/2$-graded $\bbC$-vector spaces:
$$ \theta_\pm(\mathrm{f};\iota):=TP_\pm(\mathrm{f})_{1/p} \otimes_{K, \iota} \bbC \colon TP_\pm(N\!\!M)_{1/p} \otimes_{K, \iota} \bbC \too TP_\pm(N\!\!M)_{1/p} \otimes_{K, \iota} \bbC\,.$$
Note that the above equality of formal power series \eqref{eq:equation-main} yields the following cohomological interpretation of the Hasse-Weil zeta functions of endomorphisms:
\begin{corollary}[Cohomological interpretation I]\label{cor:holomorphic}
Given an embedding $\iota \colon K \hookrightarrow \bbC$, we have the following equality of meromorphic functions:
\begin{equation}\label{eq:zeta2}
\zeta(\mathrm{f}; s) = \frac{\mathrm{det}(\id - q^{-s} \theta_-(\mathrm{f};\iota)\,|\, TP_-(N\!\!M)_{1/p} \otimes_{K, \iota} \bbC)}{\mathrm{det}(\id - q^{-s}\theta_+(\mathrm{f};\iota)\,|\, TP_+(N\!\!M)_{1/p} \otimes_{K, \iota} \bbC)}\,.
\end{equation}
\end{corollary}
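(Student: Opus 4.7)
The statement is essentially a direct consequence of the formula \eqref{eq:equation-main} established in the proof of Theorem \ref{thm:zeta}(i). My plan is as follows.

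First, since $k=\bbF_q$ is perfect, the equality
\[
Z(\mathrm{f};t) \;=\; \frac{\mathrm{det}(\id - t\,TP_-(\mathrm{f})_{1/p}\,|\, TP_-(N\!\!M)_{1/p})}{\mathrm{det}(\id - t\,TP_+(\mathrm{f})_{1/p}\,|\, TP_+(N\!\!M)_{1/p})}
\]
holds verbatim in $K\llbracket t\rrbracket$, without needing to pass to a perfect closure. Next, I would apply the ring homomorphism $\iota\colon K\hookrightarrow \bbC$ coefficientwise to this identity, using that the formation of characteristic polynomials commutes with scalar extension along $\iota$. Since $\theta_\pm(\mathrm{f};\iota)$ is by definition $TP_\pm(\mathrm{f})_{1/p}\otimes_{K,\iota}\bbC$, this yields the identity
\[
Z(\mathrm{f};t) \;=\; \frac{\mathrm{det}(\id - t\,\theta_-(\mathrm{f};\iota)\,|\,TP_-(N\!\!M)_{1/p}\otimes_{K,\iota}\bbC)}{\mathrm{det}(\id - t\,\theta_+(\mathrm{f};\iota)\,|\,TP_+(N\!\!M)_{1/p}\otimes_{K,\iota}\bbC)}
\]
in $\bbC\llbracket t\rrbracket$.

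The right-hand side is a rational function in $t$, so substituting $t=q^{-s}$ gives a meromorphic function on all of $\bbC$. The left-hand side, by definition \eqref{eq:function}, is $\zeta(\mathrm{f};s)$, which by Proposition \ref{prop:convergence} converges absolutely (and thus defines a holomorphic function) in the half-plane $\mathrm{Re}(s)>\log(\rho(\mathrm{f};\iota))/\log(q)$. The two meromorphic functions on $\bbC$ agree on this half-plane (both arise from the substitution $t=q^{-s}$ in formal power series that agree after passing from $K\llbracket t\rrbracket$ to $\bbC\llbracket t\rrbracket$ via $\iota$), hence they agree on the entire complex plane by the identity theorem for meromorphic functions.

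There is really no hard obstacle here; this is a bookkeeping exercise that packages \eqref{eq:equation-main} together with the definition of the Hasse-Weil zeta function and the convergence from Proposition \ref{prop:convergence}. The only point that deserves care is the passage from the formal identity in $K\llbracket t\rrbracket$ to an identity of meromorphic functions on $\bbC$, which requires (a) the scalar extension step along $\iota$ to write everything over $\bbC$, and (b) the observation that a convergent Dirichlet series and a rational function in $q^{-s}$ which agree on a half-plane must agree globally as meromorphic functions.
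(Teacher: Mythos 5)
Your proposal is correct and follows exactly the route the paper intends: the paper offers no separate proof of Corollary \ref{cor:holomorphic}, presenting it as an immediate consequence of the identity \eqref{eq:equation-main} combined with the definition \eqref{eq:function}. Your added care about extending scalars along $\iota$ and invoking the identity theorem to pass from the formal identity on a half-plane of convergence to an equality of meromorphic functions on all of $\bbC$ is exactly the bookkeeping the paper leaves implicit.
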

In what follows, we assume that $\mathrm{f}\colon N\!\!M \to N\!\!M$ is invertible. Recall from the multiplicative Jordan decomposition that $\theta_+(\mathrm{f};\iota)= \theta_{+,s}(\mathrm{f};\iota) \circ \theta_{+, u}(\mathrm{f};\iota)$, where $\theta_{+,s}(\mathrm{f};\iota)$, resp. $\theta_{+, u}(\mathrm{f};\iota)$, is a semi-simple, resp. unipotent, matrix with complex coefficients. Let $\Theta_+(\mathrm{f};\iota):=\mathrm{log}_q(\theta_{+,s}(\mathrm{f};\iota)) + \mathrm{log}_q(\theta_{+,u}(\mathrm{f};\iota))$, where $\mathrm{log}_q(\theta_{+,s}(\mathrm{f};\iota))$ stands for the diagonal matrix obtained by applying the logarithmic function $\mathrm{log}_q(-)$ (with respect to the principal branch $]-\frac{\pi}{\mathrm{log}(q)}, \frac{\pi}{\mathrm{log}(q)}]$) to $\theta_{+,s}(\mathrm{f};\iota)$, and $\mathrm{log}_q(\theta_{+,u}(\mathrm{f};\iota))$ stands for the matrix obtained by applying the convergent series $\frac{1}{\mathrm{log}(q)}\mathrm{log}(-)$ to $\theta_{+,u}(\mathrm{f};\iota)$. Consider also the following infinite-dimensional $\bbC$-vector space 
$$TP_{\mathrm{even}}(N\!\!M)_{1/p} \otimes_{K, \iota} \bbC:=\bigoplus_{j \in \bbZ} (TP_+(N\!\!M)_{1/p} \otimes_{K, \iota} \bbC)$$ 
and its endomorphism $\Theta_{\mathrm{even}}(\mathrm{f};\iota)$ defined as $v \mapsto \Theta_+(\mathrm{f};\iota)(v) + j \cdot \frac{2\pi i}{\mathrm{log}(q)} \cdot v$ for every homogeneous vector $v$ of degree $j$. In the same vein, consider the infinite-dimensional $\bbC$-vector space $TP_{\mathrm{odd}}(N\!\!M)_{1/p} \otimes_{K, \iota} \bbC$ and its endomorphism $\Theta_{\mathrm{odd}}(\mathrm{f};\iota)$.

By combining Corollary \ref{cor:holomorphic} with Deninger's theory of regularized determinants $\mathrm{det}_\infty(-)$ (consult (the proof of) \cite[Cor.~2.8]{Deninger}), we obtain moreover the following cohomological interpretation of the Hasse-Weil zeta functions of endomorphisms:
\begin{corollary}[Cohomological interpretation II]\label{cor:II}
Given an embedding $\iota \colon K \hookrightarrow \bbC$, we have the following equality of meromorphic functions:
\begin{equation}\label{eq:zeta3}
\zeta(\mathrm{f}; s) = \frac{\mathrm{det}_\infty(\id\cdot s - \Theta_{\mathrm{odd}}(\mathrm{f};\iota)\,|\, TP_{\mathrm{odd}}(N\!\!M)_{1/p} \otimes_{K, \iota} \bbC)}{\mathrm{det}_\infty(\id\cdot s - \Theta_{\mathrm{even}}(\mathrm{f};\iota)\,|\,TP_{\mathrm{even}}(N\!\!M)_{1/p} \otimes_{K, \iota} \bbC)}\,.
\end{equation}
\end{corollary}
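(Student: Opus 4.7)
The plan is to deduce \eqref{eq:zeta3} directly from \eqref{eq:zeta2} by a termwise application of Deninger's regularized-determinant identity, applied separately to the numerator and denominator. No further input from the theory of noncommutative motives is needed beyond what already enters Corollary \ref{cor:holomorphic}; all that remains is to convert each ordinary finite-dimensional characteristic polynomial $\mathrm{det}(\id - q^{-s}\theta)$ into the corresponding regularized determinant $\mathrm{det}_\infty(\id\cdot s - \widetilde{\Theta})$ on an infinite direct sum of shifted copies of the underlying space.

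The key identity I would invoke (extracting it from the proof of \cite[Cor.~2.8]{Deninger}) reads as follows. Let $V$ be a finite-dimensional $\bbC$-vector space and $\theta$ an invertible endomorphism of $V$ with multiplicative Jordan decomposition $\theta = \theta_s \circ \theta_u$. Set $\Theta := \mathrm{log}_q(\theta_s) + \mathrm{log}_q(\theta_u)$, using the principal branch $]-\pi/\mathrm{log}(q),\,\pi/\mathrm{log}(q)]$ of $\mathrm{log}_q$ on the spectrum of $\theta_s$ and the convergent series $\mathrm{log}(-)/\mathrm{log}(q)$ on the unipotent factor; let $\widetilde{\Theta}$ denote the endomorphism of $\bigoplus_{j \in \bbZ} V$ acting on the $j$-th summand as $\Theta + j\cdot \tfrac{2\pi i}{\mathrm{log}(q)}\cdot \id$. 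Then
\[
\mathrm{det}(\id - q^{-s}\theta\,|\, V) \;=\; \mathrm{det}_\infty(\id\cdot s - \widetilde{\Theta}\,|\, \textstyle\bigoplus_{j \in \bbZ} V)
\]
holds as an equality of meromorphic functions of $s$. I would prove this reduction in the standard way: by triangularization and multiplicativity of both sides under $\theta$-stable short exact sequences, it reduces to the one-dimensional case $V=\bbC$, $\theta = (q^\beta)$ with $\beta$ in the principal strip, which is exactly the $\zeta$-regularized Euler factorization of $1 - q^{\beta - s}$ carried out in \loccit.

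With this identity in hand, the next step is to apply it separately to $\theta_+(\mathrm{f};\iota)$ acting on $TP_+(N\!\!M)_{1/p}\otimes_{K,\iota}\bbC$ and to $\theta_-(\mathrm{f};\iota)$ acting on $TP_-(N\!\!M)_{1/p}\otimes_{K,\iota}\bbC$. By the very definitions of $\Theta_{\mathrm{even}}(\mathrm{f};\iota)$ and $\Theta_{\mathrm{odd}}(\mathrm{f};\iota)$ given immediately before the statement, the resulting $\widetilde{\Theta}$'s are precisely $\Theta_{\mathrm{even}}(\mathrm{f};\iota)$ and $\Theta_{\mathrm{odd}}(\mathrm{f};\iota)$. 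Substituting the two resulting equalities into the numerator and denominator of the right-hand side of \eqref{eq:zeta2} then produces exactly the right-hand side of \eqref{eq:zeta3}, completing the argument.

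The main (though minor) obstacle is the bookkeeping around the principal branch and the $\zeta$-regularization convention: one must verify that the branch $]-\pi/\mathrm{log}(q),\,\pi/\mathrm{log}(q)]$ used in the excerpt's definition of $\Theta_+$ matches Deninger's convention, so that no spurious exponential factor $\exp(\text{linear in }s)$ appears when passing between the two sides of \eqref{eq:zeta3}. Since the definitions of $\Theta_+$, $\Theta_{\mathrm{even}}$, and $\Theta_{\mathrm{odd}}$ in the excerpt have been tailored precisely to match \cite[Cor.~2.8]{Deninger}, this amounts to a careful verification of conventions rather than any substantive new argument.
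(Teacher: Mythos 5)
Your proposal is correct and follows essentially the same route as the paper, which likewise obtains \eqref{eq:zeta3} by combining Corollary \ref{cor:holomorphic} with the regularized-determinant identity extracted from the proof of \cite[Cor.~2.8]{Deninger}, applied separately to the even and odd parts. The paper leaves the reduction to the one-dimensional case and the branch/normalization bookkeeping implicit, so your write-up simply makes explicit the same argument.
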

In the particular case of the classical Hasse-Weil zeta function $\zeta_X(s)$ of a smooth proper $k$-scheme $X$ (see Example \ref{ex:Hasse}), the cohomological interpretations \eqref{eq:zeta2} and \eqref{eq:zeta3} were originally established by Hesselholt in \cite{Hesselholt}. On the one hand, his proof of \eqref{eq:zeta2} was based on the combination of certain spectral sequences (the conjugate and the Hodge spectral sequences) with Berthelot's cohomological interpretation \cite[\S VIII]{Berthelot} of the classical Hasse-Weil zeta function in terms of crystalline cohomology theory. On the other hand, his proof of \eqref{eq:zeta3} was (similarly) obtained from \eqref{eq:zeta2} as an application of Deninger's theory of regularized determinants. Corollaries \ref{cor:holomorphic} and \ref{cor:II} provide us with an alternative proof of Hesselholt's cohomological interpretations of the classical Hasse-Weil zeta function. Moreover, these corollaries establish a far-reaching noncommutative generalization of Hesselholt's~results.

\section{Numerical Grothendieck groups}\label{sec:app2}
Given a proper dg category $\cA$, its Grothendieck group $K_0(\cA):=K_0(\cD_c(\cA))$ comes equipped with the following Euler bilinear pairing:
$$
\chi \colon  K_0(\cA) \times K_0(\cA) \too \bbZ \quad ([M],[N]) \mapsto \sum_n (-1)^n \mathrm{dim}\, \Hom_{\cD_c(\cA)}(M,N[n])\,.
$$
This bilinear pairing is, in general, not symmetric neither skew-symmetric. Nevertheless, when $\cA$ is moreover smooth, the associated left and right kernels of $\chi$ agree; see \cite[Prop.~4.24]{book}. Consequently, under these assumptions on $\cA$, we have a well-defined {\em numerical Grothendieck group} $K_0(\cA)/_{\!\!\sim \mathrm{num}}:=K_0(\cA)/\mathrm{Ker}(\chi)$.

Our third main result is the following:
\begin{theorem}\label{thm:main2}
$K_0(\cA)/_{\!\!\sim \mathrm{num}}$ is a finitely generated free abelian group.
\end{theorem}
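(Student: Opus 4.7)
My strategy is to identify $K_0(\cA)/_{\!\!\sim \mathrm{num}}$ with a $\Hom$-set in the category of noncommutative numerical motives, extract a finite rational rank using the semi-simplicity machinery of \S\ref{sec:semi}, and then conclude by an elementary piece of integral linear algebra applied to the Euler bilinear form.

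The first step is bookkeeping. The integral version of \eqref{eq:Homs} with $R=\bbZ$ yields $\Hom_{\NChow(k)}(U(k),U(\cA))\simeq K_0(\cA)$, and under this identification the composition-trace pairing in $\NChow(k)$ corresponds to the Euler pairing $\chi$ on $K_0(\cA)$. Consequently the $\cN$-ideal cuts out exactly numerical equivalence on $K_0(\cA)$, so integrally $K_0(\cA)/_{\!\!\sim \mathrm{num}}\simeq \Hom_{\NNum(k)}(U(k),U(\cA))$. Torsion-freeness of the quotient is then immediate: if $n[M]$ is numerically trivial for some $n\geq 1$, the identity $n\chi([M],\alpha)=0$ in $\bbZ$ forces $\chi([M],\alpha)=0$ for every $\alpha$, so $[M]$ is already numerically trivial.

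Next I would show that the rational quotient $(K_0(\cA)/_{\!\!\sim\mathrm{num}})\otimes_{\bbZ}\bbQ\simeq \Hom_{\NNum(k)_{\bbQ}}(U(k)_{\bbQ},U(\cA)_{\bbQ})$ has finite $\bbQ$-dimension. Composing the base-change \eqref{eq:closure} with the realization \eqref{eq:TP1} yields a $\bbQ$-linear symmetric monoidal functor $H\colon\NChow(k)_{\bbQ}\to\mathrm{vect}_{\bbZ/2}(K)$, which by the Andr\'e--Kahn argument used to establish Theorem \ref{thm:main} (Proposition \ref{prop:AK}) descends to a faithful functor $\overline{H}\colon\NNum(k)_{\bbQ}\to\mathrm{vect}_{\bbZ/2}(K)$. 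The delicate point is that $[K:\bbQ]$ can be infinite, so faithfulness alone does not bound $\bbQ$-dimension. I would upgrade it via a trace-duality argument: given $\bbQ$-linearly independent $f_1,\ldots,f_m\in\Hom_{\NNum(k)_{\bbQ}}(U(k)_{\bbQ},U(\cA)_{\bbQ})$, choose a dual family $g_1,\ldots,g_m$ in $\Hom_{\NNum(k)_{\bbQ}}(U(\cA)_{\bbQ},U(k)_{\bbQ})$ so that the $\bbQ$-valued matrix $(\mathrm{tr}(g_j\circ f_i))_{i,j}$ is invertible (using non-degeneracy of the trace pairing on the quotient by $\cN$). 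Since $H$ is symmetric monoidal, it preserves categorical traces, so the matrix $(\mathrm{tr}(H(g_j)\circ H(f_i)))_{i,j}$ in $K$ coincides with the previous one viewed via $\bbQ\subseteq K$, hence remains invertible. Thus $H(f_1),\ldots,H(f_m)$ are $K$-linearly independent, and $m$ is bounded by the finite number $\dim_K\Hom_K(TP_{\pm}(k)_{1/p},TP_{\pm}(\cA\otimes_{k}k^{\mathrm{perf}})_{1/p})$.

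Finally, I would combine these three ingredients using the following elementary lemma: a torsion-free abelian group $G$ of finite $\bbQ$-rank $n$, equipped with a $\bbZ$-valued bilinear form whose left (equivalently right) kernel is zero, embeds into $\bbZ^n$ via $g\mapsto(\chi(g_i,g))_{1\leq i\leq n}$ for any $g_1,\ldots,g_n\in G$ whose images in $G\otimes\bbQ$ form a $\bbQ$-basis; injectivity is by non-degeneracy extended $\bbQ$-linearly. Applied to $G=K_0(\cA)/_{\!\!\sim\mathrm{num}}$, this exhibits it as a subgroup of $\bbZ^n$, hence as a finitely generated free abelian group. The main obstacle I anticipate is precisely the trace-duality argument in the third paragraph: the essential content is that the intrinsically $\bbQ$-valued trace form on $\NNum(k)_{\bbQ}$ controls $\bbQ$-dimensions, even though the realization only naturally detects $K$-dimensions.
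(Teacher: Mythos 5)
Your overall architecture coincides with the paper's: identify $K_0(\cA)/_{\!\!\sim \mathrm{num}}$ with $\Hom_{\NNum(k)}(U(k),U(\cA))$, bound the rational rank via the realization underlying Theorem \ref{thm:main}, and finish with integral linear algebra. Your last step (embedding a torsion-free group of finite rank carrying a non-degenerate $\bbZ$-valued pairing into $\bbZ^n$) is exactly the right way to close the argument, and is in fact spelled out more carefully than in the paper, which compresses it into ``torsion-free plus finite rank suffices.'' Your trace-duality argument for finite rank is also sound, though it amounts to re-proving the relevant piece of Proposition \ref{prop:AK}: the paper simply invokes the semi-simplicity of $\NNum(k)_\bbQ$, which in the Andr\'e--Kahn formulation already comes packaged with finite-dimensional Hom-spaces.

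The genuine gap is in what you dismiss as ``bookkeeping.'' Under \eqref{eq:Homs}, the composition-trace pairing is
$$\Hom(U(k),U(\cA))\times \Hom(U(\cA),U(k)) \too \bbZ\,, $$
and since $\Hom(U(\cA),U(k))\simeq K_0(\cA^\op)$, it is the pairing $\varphi([M],[P])=[M\otimes^{\bf L}_\cA P]$ on $K_0(\cA)\times K_0(\cA^{\op})$ --- \emph{not} the Euler pairing $\chi([M],[N])=[{\bf R}\Hom_\cA(M,N)]$ on $K_0(\cA)\times K_0(\cA)$. The assertion that the $\cN$-ideal cuts out numerical equivalence is therefore the statement that $\chi$ and $\varphi$ have the same left kernel, and this is where the paper does its real work (Proposition \ref{prop:key2} and Lemma \ref{lem:key}): one uses properness of $\cA$ to get the anti-equivalence $\cD_c(\cA)\simeq \cD_c(\cA^\op)^\op$, $N\mapsto N^\ast$, and then proves by d\'evissage on finite cell modules that ${\bf R}\Hom_\cA(M,N)$ is $k$-linearly dual to $M\otimes^{\bf L}_\cA N^\ast$, so that the two Grothendieck classes in $K_0(k)\simeq\bbZ$ agree. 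Without this comparison your identification $K_0(\cA)/_{\!\!\sim\mathrm{num}}\simeq \Hom_{\NNum(k)}(U(k),U(\cA))$ is unjustified, and the rest of the argument has nothing to stand on. You should either supply this lemma or at least flag it as the nontrivial input; note that it genuinely uses properness, so it is not a formal consequence of the Morita-theoretic identifications.
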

To the best of the author's knowledge, Theorem \ref{thm:main2} is new in the literature. It was claimed (without proof) by Kuznetsov in his work \cite{Kuznetsov} on surface-like categories.

Given a smooth proper $k$-scheme $X$, let us write $\cZ^\ast(X)/_{\!\!\sim \mathrm{num}}$ for the (graded) group of algebraic cycles on $X$ up to numerical equivalence. 
\begin{corollary}\label{cor:main2}
$\cZ^\ast(X)/_{\!\!\sim \mathrm{num}}$ is a finitely generated free abelian (graded) group.
\end{corollary}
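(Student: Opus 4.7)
The plan is to apply Theorem \ref{thm:main2} to the smooth proper dg category $\cA=\perf_\dg(X)$ and then to transfer the conclusion to the cycle-theoretic group via the Chern character and Grothendieck-Riemann-Roch.

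First, since $X$ is smooth proper, the dg category $\perf_\dg(X)$ is smooth proper (see \S\ref{sub:dg}) and $\cD_c(\perf_\dg(X))\simeq \perf(X)$. Hence Theorem \ref{thm:main2} applies and yields that the classical $K_0(X)/_{\!\!\sim \mathrm{num}}$, defined via the Euler pairing $\chi([M],[N])=\sum_i(-1)^i\dim\Hom_{\perf(X)}(M,N[i])$, is a finitely generated free abelian group.

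Second, I would invoke the Chern character isomorphism $\mathrm{ch}\colon K_0(X)_\bbQ \isoto \Ch^\ast(X)_\bbQ$ together with the Grothendieck-Riemann-Roch identity
\[
\chi([M],[N]) \;=\; \int_X \mathrm{ch}(M^\vee) \cdot \mathrm{ch}(N) \cdot \mathrm{td}(X).
\]
Since $\mathrm{td}(X)$ is a unit in $\Ch^\ast(X)_\bbQ$ (its degree-zero part is $1$), multiplication by $\mathrm{td}(X)$ is a bijection of $\Ch^\ast(X)_\bbQ$, so $\mathrm{ch}$ identifies the radical of the Euler pairing with the radical of the intersection pairing on $\Ch^\ast(X)_\bbQ$. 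This produces a $\bbQ$-linear isomorphism
\[
(K_0(X)/_{\!\!\sim \mathrm{num}})\otimes_\bbZ \bbQ \;\simeq\; (\cZ^\ast(X)/_{\!\!\sim \mathrm{num}})\otimes_\bbZ \bbQ,
\]
and the $\bbZ$-rank of $\cZ^\ast(X)/_{\!\!\sim \mathrm{num}}$ is therefore finite.

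Third, I would note that $\cZ^\ast(X)/_{\!\!\sim \mathrm{num}}$ is torsion-free by a one-line argument: if $n\alpha$ is numerically trivial for some positive integer $n$, then $n(\alpha\cdot\beta)=0$ in $\bbZ$ for every cycle $\beta$, whence $\alpha\cdot\beta=0$, so $\alpha$ is itself numerically trivial. A finitely generated torsion-free abelian group is free, so $\cZ^\ast(X)/_{\!\!\sim \mathrm{num}}=\bigoplus_{i=0}^{\dim(X)}\cZ^i(X)/_{\!\!\sim \mathrm{num}}$ is finitely generated free abelian, and each graded piece is likewise finitely generated free as a direct summand. The main obstacle will be Step~2, i.e.\ checking rigorously that $\mathrm{ch}$ matches the two numerical radicals; this amounts to Grothendieck-Riemann-Roch combined with invertibility of $\mathrm{td}(X)$, which preserves the vanishing condition defining numerical equivalence.
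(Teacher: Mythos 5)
Your proposal is correct and follows essentially the same route as the paper: apply Theorem \ref{thm:main2} to $\cA=\perf_\dg(X)$, transfer finite-dimensionality through the Chern character isomorphism \eqref{eq:2-sides} (which the paper attributes in one line to Hirzebruch--Riemann--Roch, exactly the Todd-class argument you spell out), and conclude by torsion-freeness of $\cZ^\ast(X)/_{\!\!\sim \mathrm{num}}$. Your extra details on matching the two numerical radicals and on torsion-freeness are just expansions of steps the paper leaves implicit.
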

\subsection*{Proof of Theorem \ref{thm:main2}}
Note first that, by definition, the numerical Grothendieck group $K_0(\cA)/_{\!\!\sim \mathrm{num}}$ is torsion-free. Therefore, in order to prove that $K_0(\cA)/_{\!\!\sim \mathrm{num}}$ is a finitely generated free abelian group, it suffices to show that the $\bbQ$-vector space $K_0(\cA)/_{\!\!\sim \mathrm{num}}\otimes_\bbZ \bbQ$ is finite-dimensional. Consider the following bilinear pairing:
\begin{eqnarray*}
\varphi\colon K_0(\cA) \times K_0(\cA^\op) \too K_0(k) \simeq \bbZ && ([M],[P]) \mapsto [M\otimes^{\bf L}_\cA P]\,.
\end{eqnarray*}
Under the isomorphisms \eqref{eq:Homs}, the bilinear pairing $\varphi$ corresponds to the following composition pairing (we removed the subscripts $\NChow(k)$):
$$
\Hom(U(k),U(\cA))\times \Hom(U(\cA),U(k))\too \mathrm{End}(U(k)) \quad (\mathrm{f},\mathrm{g}) \mapsto \mathrm{g}\circ \mathrm{f} = \mathrm{tr}(\mathrm{g}\circ \mathrm{f})\,.
$$
Let $M\in \cD_c(\cA)$. Thanks to Proposition \ref{prop:key2} below, we have $\chi([M],[N])=0$ for every $N \in \cD_c(\cA)$ if and only if $\varphi([M],[P])=0$ for every $P \in \cD_c(\cA^\op)$. This implies that $K_0(\cA)/_{\!\!\sim \mathrm{num}}$ is isomorphic to $\Hom_{\NNum(k)}(U(k),U(\cA))$. Consequently, we conclude that the $\bbQ$-vector space $K_0(\cA)/_{\!\!\sim \mathrm{num}} \otimes_\bbZ \bbQ$ is isomorphic to
\begin{equation}\label{eq:vector}
\Hom_{\NNum(k)}(U(k),U(\cA))\otimes_\bbZ \bbQ \simeq \Hom_{\NNum(k)_\bbQ}(U(k)_\bbQ, U(\cA)_\bbQ)\,,
\end{equation}
where the latter isomorphism follows from the compatibility of the $\otimes$-ideal $\cN$ with the change-of-coefficients along $\bbZ \to \bbQ$; consult \cite[Prop.~1.4.1 3)]{Brug}. Finally, since the category $\NNum(k)_\bbQ$ is abelian semi-simple (see Theorem \ref{thm:main}), the $\bbQ$-vector space \eqref{eq:vector} is finite-dimensional. This concludes the proof.
\begin{proposition}\label{prop:key2}
Given a right dg $\cA$-module $M \in \cD_c(\cA)$, we have $\chi([M],[N])=0$ for every $N \in \cD_c(\cA)$ if and only if $\varphi([M],[P])=0$ for every $P \in \cD_c(\cA^\op)$.
\end{proposition}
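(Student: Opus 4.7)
The plan is to rewrite both vanishing conditions in the proposition as statements about the Euler bilinear pairing $\chi$ on $K_0(\cA)$, and then to invoke the left-right symmetry of its radical. The smoothness of $\cA$ will enter twice: once to produce a duality $\cD_c(\cA)^{\mathrm{op}}\isoto \cD_c(\cA^{\mathrm{op}})$, and once to guarantee the standard ``evaluation'' isomorphism relating $-\otimes^{\bf L}_\cA-$ and $\RHom_\cA(-,-)$.

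First, since $\cA$ is smooth and proper, the functor $(-)^\vee:=\RHom_\cA(-,\cA)$ restricts to an equivalence between $\cD_c(\cA)^{\mathrm{op}}$ and $\cD_c(\cA^{\mathrm{op}})$; in particular every $P\in \cD_c(\cA^{\mathrm{op}})$ is of the form $N^\vee$ for some $N\in\cD_c(\cA)$. Hence the condition $\varphi([M],[P])=0$ for every $P\in \cD_c(\cA^{\mathrm{op}})$ is equivalent to $[M\otimes^{\bf L}_\cA N^\vee]=0$ for every $N\in\cD_c(\cA)$. Next I would use the standard evaluation isomorphism
$$
M\otimes^{\bf L}_\cA N^\vee \;\simeq\; \RHom_\cA(N,M)
$$
in $\cD_c(k)$, valid because $N$ is compact, hence perfect (here smoothness of $\cA$ ensures $N^\vee$ lands in $\cD_c(\cA^{\mathrm{op}})$). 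Passing to Euler characteristics in $K_0(k)\simeq \bbZ$ this yields $\varphi([M],[N^\vee])=\chi([N],[M])$. Therefore the right-hand condition of the proposition is equivalent to $[M]$ lying in the \emph{right} radical of $\chi$, whereas the left-hand condition is, by definition, $[M]$ lying in the \emph{left} radical of $\chi$.

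At this point the proposition reduces to the fact that the left and right radicals of the Euler pairing on $K_0(\cA)$ coincide, which is precisely \cite[Prop.~4.24]{book}; this symmetry was already invoked at the beginning of the section to make sense of $K_0(\cA)/_{\!\!\sim\mathrm{num}}$ as $K_0(\cA)/\mathrm{Ker}(\chi)$. The only non-formal ingredient is the evaluation isomorphism in the middle paragraph; it is tautological for $N=\cA$ and extends to all of $\cD_c(\cA)$ by a routine thick-subcategory argument, since both sides are exact in $N$ and preserve arbitrary direct summands. I do not foresee any further obstacle.
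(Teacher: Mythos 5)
Your proof is correct, but it follows a genuinely different route from the paper's. The paper dualizes with respect to the base field: since $\cA$ is proper, $M\mapsto M^\ast$ (postcomposition with $k$-linear duality) gives an equivalence $\cD_c(\cA)\simeq \cD_c(\cA^\op)^\op$, and a short lemma shows that ${\bf R}\mathrm{Hom}_\cA(M,N)$ and $M\otimes^{\bf L}_\cA N^\ast$ are $k$-linear dual complexes; hence $\chi([M],[N])=\varphi([M],[N^\ast])$ with $M$ remaining in the \emph{first} slot on both sides, and the equivalence of the two vanishing conditions is immediate without ever comparing left and right radicals. You instead dualize over $\cA$ itself, via $N^\vee=\RHom_\cA(N,\cA)$ and the evaluation isomorphism $M\otimes^{\bf L}_\cA N^\vee\simeq \RHom_\cA(N,M)$, which yields $\varphi([M],[N^\vee])=\chi([N],[M])$ with the arguments \emph{swapped}; you must then invoke the coincidence of the left and right kernels of $\chi$ from \cite[Prop.~4.24]{book}. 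Both arguments are sound: yours is shorter and reuses a result the section already cites (and since that result is external to this paper, there is no circularity), while the paper's is self-contained modulo its Lemma \ref{lem:key} and makes the identification of $\varphi$ with $\chi$ transparent without appealing to the symmetry of the radical. One small correction: it is the compactness of $N$ (a retract of a finite cell module built from representables), not the smoothness of $\cA$, that guarantees $N^\vee\in\cD_c(\cA^\op)$ and that $(-)^\vee$ is an equivalence on compact objects; smoothness enters your argument only through \cite[Prop.~4.24]{book}.
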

\begin{proof}
Note first that the differential graded structure of the dg category of complexes of $k$-vector spaces $\cC_\dg(k)$ makes the category of right dg $\cA$-modules $\cC(\cA)$ into a dg category $\cC_\dg(\cA)$. Given right dg $\cA$-modules $M,N \in \cD_c(\cA)$, let us write ${\bf R}\Hom_\cA(M,N) \in \cD_c(k)$ for the corresponding (derived) complex of $k$-vector spaces. Since the Grothendieck class $[{\bf R}\Hom_\cA(M,N)] \in K_0(k)\simeq \bbZ$ agrees with the alternating sum $\sum_n (-1)^n \mathrm{dim}\, \Hom_{\cD_c(\cA)}(M,N[n])$, the Euler bilinear pairing $\chi\colon K_0(\cA) \times K_0(\cA) \to \bbZ$ can be re-written as $([M],[N]) \mapsto [{\bf R}\Hom_\cA(M,N)]$.

Given a right dg $\cA$-module $M$, let us denote by $M^\ast$ the right dg $\cA^\op$-module obtained by composing the dg functor $M\colon \cA^\op \to \cC_\dg(k)$ with the $k$-linear duality $(-)^\ast$. Note that since the dg category $\cA$ is proper, the assignment $M\mapsto M^\ast$ gives rise to an equivalence of categories $\cD_c(\cA) \simeq \cD_c(\cA^\op)^\op$. 

We now have the ingredients necessary to conclude the proof. Assume that $\chi([M],[N])=0$ for every $N \in \cD_c(\cA)$. We need to show that $\varphi([M],[P])=0$ for every $P \in \cD_c(\cA^\op)$. In other words, we need to show that the Grothendieck class $[M\otimes^{\bf L}_\cA P] \in K_0(k)\simeq \bbZ$ is zero for every $P \in \cD_c(\cA^\op)$. Thanks to Lemma \ref{lem:key}(ii) below, the (derived) complex of $k$-vector spaces $M\otimes^{\bf L}_\cA P$ is $k$-linear dual to ${\bf R}\Hom_\cA(M,P^\ast)$. Consequently, since $[{\bf R}\Hom_\cA(M,P^\ast)]=\chi([M],[P^\ast])=0$, we conclude that $[M\otimes^{\bf L}_\cA P]=0$. Let us assume now that $\varphi([M],[P])=0$ for every $P \in \cD_c(\cA^\op)$. We need to show that $\chi([M],[N])=0$ for every $N \in \cD_c(\cA)$. In other words, we need to show that the Grothendieck class $[{\bf R}\Hom_\cA(M,N)]\in K_0(k)\simeq \bbZ$ is zero for every $N \in \cD_c(\cA^\op)$. Thanks to Lemma \ref{lem:key}(i) below, the (derived) complex of $k$-vector spaces ${\bf R}\Hom_\cA(M,N)$ is $k$-linear dual to $M\otimes^{\bf L}N^\ast$. Consequently, since $[M\otimes^{\bf L}_\cA N^\ast]=\varphi([M],[N^\ast])=0$, we conclude that $[{\bf R}\Hom_\cA(M,N)]=0$.
\end{proof}
\begin{lemma}\label{lem:key}
\begin{itemize}
\item[(i)] Given $M,N \in \cD_c(\cA)$, the associated (derived) complexes of $k$-vector spaces ${\bf R}\Hom_\cA(M,N)$ and $M\otimes^{\bf L}_\cA N^\ast$ are $k$-linear dual to each other. 
\item[(ii)] Given $M\in \cD_c(\cA)$ and $P \in \cD_c(\cA^\op)$, the associated (derived) complexes of $k$-vector spaces ${\bf R}\Hom_\cA(M,P^\ast)$ and $M\otimes^{\bf L}_\cA P$ are $k$-linear dual to each other. 
\end{itemize}
\end{lemma}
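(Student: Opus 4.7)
\textbf{Proof plan for Lemma \ref{lem:key}.} The plan is to obtain both statements from the standard derived tensor-Hom adjunction for dg modules, and then to deduce (i) from (ii) via biduality.

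First I would recall that for any right dg $\cA$-module $M$, any left dg $\cA$-module $L$ (equivalently, a right dg $\cA^{\op}$-module), and any $V \in \cD(k)$, the derived tensor-Hom adjunction yields a natural isomorphism
\begin{equation*}
{\bf R}\Hom_k(M \otimes^{\bf L}_\cA L,\, V) \;\simeq\; {\bf R}\Hom_\cA(M,\, {\bf R}\Hom_k(L, V))
\end{equation*}
in $\cD(k)$. Specializing to $V = k$ produces a canonical quasi-isomorphism $(M \otimes^{\bf L}_\cA L)^\ast \simeq {\bf R}\Hom_\cA(M, L^\ast)$, which is the engine for both parts of the lemma.

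To prove (ii), I would simply set $L := P \in \cD_c(\cA^{\op})$. The displayed adjunction then reads $(M \otimes^{\bf L}_\cA P)^\ast \simeq {\bf R}\Hom_\cA(M, P^\ast)$, which is exactly the asserted $k$-linear duality. For (i), I would set $L := N^\ast$, which lies in $\cD_c(\cA^{\op})$ since properness of $\cA$ makes $(-)^\ast$ into an equivalence $\cD_c(\cA) \simeq \cD_c(\cA^{\op})^{\op}$ (as already invoked in the proof of Proposition \ref{prop:key2}). Under this equivalence, the canonical bidualization map $N \to (N^\ast)^\ast$ is an isomorphism, so the adjunction gives
\begin{equation*}
(M \otimes^{\bf L}_\cA N^\ast)^\ast \;\simeq\; {\bf R}\Hom_\cA(M, (N^\ast)^\ast) \;\simeq\; {\bf R}\Hom_\cA(M, N),
\end{equation*}
establishing (i).

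The only mildly subtle point is justifying the biduality isomorphism $N \simeq (N^\ast)^\ast$ employed in (i). Beyond the abstract equivalence cited above, one can verify this pointwise: for each object $x \in \cA$, the representable $h_x$ satisfies $h_x(y) = \cA(y,x) \in \cD_c(k)$ by properness, hence biduality for $h_x$ reduces to finite-dimensional biduality over $k$; since the class of $N \in \cD(\cA)$ for which $N \to (N^\ast)^\ast$ is a quasi-isomorphism is thick and contains the representables (which generate $\cD_c(\cA)$), it contains all of $\cD_c(\cA)$. This is the main technical input; the rest is a formal consequence of tensor-Hom adjunction.
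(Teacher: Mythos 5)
Your proof is correct, but it is organized quite differently from the paper's. The paper proves item (i) directly by d\'evissage: it computes both sides explicitly on a shifted representable $\widehat{y}[n]$ (where both reduce to $N^\ast(y)[n]$), extends along finite homotopy colimits using that $(-)^\ast$ converts the resulting finite homotopy limits back into colimits, and closes under retracts; item (ii) is then declared ``similar.'' You instead take the derived tensor--Hom adjunction ${\bf R}\Hom_k(M\otimes^{\bf L}_\cA L,V)\simeq {\bf R}\Hom_\cA(M,{\bf R}\Hom_k(L,V))$ with $V=k$ as the engine, which yields item (ii) in one line with no d\'evissage at all (and indeed without using compactness of $M$ or $P$ for the quasi-isomorphism itself), and you deduce item (i) from it via the biduality $N\simeq (N^{\ast})^{\ast}$, which is where properness and a thick-subcategory argument on representables enter. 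Both routes are sound; yours has the advantage of isolating exactly where properness is needed (only in the biduality step for (i)) and of making (ii) genuinely formal, while the paper's computation is more self-contained in that it never invokes the adjunction and treats both sides symmetrically from the start. Your justification of biduality --- representables are objectwise perfect over $k$ by properness, and the locus where the natural transformation $\id\to(-)^{\ast\ast}$ of triangulated functors is invertible is thick --- is exactly the standard argument and is the same d\'evissage the paper uses, just relocated.
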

\begin{proof}
We prove solely item (i); the proof of item (ii) is similar. Given an object $y \in \cA$ and an integer $n \in \bbZ$, consider the following right dg $\cA$-module:
\begin{eqnarray*}
\widehat{y}[n] \colon \cA^\op \too \cC_\dg(k) && x \mapsto \cA(x,y)[n]\,.
\end{eqnarray*}
Every right dg $\cA$-module $M \in \cD_c(\cA)$ is a retract of a finite homotopy colimit of right dg $\cA$-modules of the form $\widehat{y}[n]$. When $M=\widehat{y}[n]$, we have:
$$ {\bf R}\Hom_\cA(\widehat{y}[n], N)^\ast \simeq (N(y)[-n])^\ast \simeq N^\ast(y)[n]\simeq \widehat{y}[n]\otimes^{\bf L}_\cA N^\ast\,.$$
In the same vein, when $M=\mathrm{hocolim}_{i\in I} \,\widehat{y_i}[n_i]$, we have natural isomorphisms
\begin{eqnarray}
{\bf R}\Hom_\cA(\mathrm{hocolim}_{i\in I} \,\widehat{y_i}[n_i], N)^\ast & \simeq & (\mathrm{holim}_{i\in I}  \,{\bf R}\Hom_\cA(\widehat{y_i}[n_i],N))^\ast \nonumber \\
& \simeq & \mathrm{hocolim}_{i \in I} \,{\bf R}\Hom_\cA(\widehat{y_i}[n_i],N)^\ast \label{eq:star-last}\\
& \simeq & \mathrm{hocolim}_{i \in I} \,(\widehat{y_i}[n_i]\otimes^{\bf L}_\cA N^\ast)  \nonumber\\
& \simeq & (\mathrm{hocolim}_{i \in I}\, \widehat{y_i}[n_i])\otimes^{\bf L}_\cA N^\ast\,, \nonumber
\end{eqnarray}
where in \eqref{eq:star-last} we use the fact that the indexing category $I$ is finite. The proof follows now from the fact that the $k$-linear duality $(-)^\ast$ preserves retracts.
\end{proof}
\begin{remark}[Kontsevich's noncommutative numerical motives]\label{rk:Kontsevich}
Let $F$ be a field of coefficients of characteristic zero. Recall from \S\ref{sub:numerical} that the symmetric monoidal category $\NNum(k)_F$ is rigid. Therefore, as the above proof of Theorem \ref{thm:main2} shows (with $\bbQ$ replaced by $F$), we have a natural isomorphism
$$\Hom_{\NNum(k)_F}(U(\cA)_F,U(\cB)_F) \simeq K_0(\cA^\op \otimes \cB)_F/\mathrm{Ker}(\chi\otimes_\bbZ F)$$ 
for any two smooth proper dg categories $\cA$ and $\cB$. This implies that we can alternatively define the category $\NNum(k)_F$ as the idempotent completion of the quotient of $\NChow(k)_F$ by the $\otimes$-ideal $\mathrm{Ker}(\chi\otimes_\bbZ F)$. This was the approach used by Kontsevich in his seminal talk \cite{IAS}.
\end{remark}
\subsection*{Proof of Corollary \ref{cor:main2}}
Thanks to the Hirzebruch-Riemann-Roch theorem, the Chern character induces an isomorphism of $\bbQ$-vector spaces:
\begin{equation}\label{eq:2-sides}
K_0(\perf_\dg(X))/_{\!\!\sim \mathrm{num}} \otimes_\bbZ \bbQ\stackrel{\simeq}{\too} \cZ^\ast(X)/_{\!\!\sim \mathrm{num}}\otimes_\bbZ \bbQ\,.
\end{equation}
Hence, by applying Theorem \ref{thm:main2} to the dg category $\cA=\perf_\dg(X)$, we conclude that the right-hand side of \eqref{eq:2-sides} is a finite-dimensional $\bbQ$-vector space. The proof follows now from the fact that $\cZ^\ast(X)/_{\!\!\sim \mathrm{num}}$ is a torsion-free abelian (graded) group.
\begin{remark}
The analogue of Theorem \ref{thm:main2} (and of Corollary \ref{cor:main2}), with $k$ of characteristic zero, was proved in \cite[Thm.~1.2]{Separable}.
\end{remark}
\section{Noncommutative motivic Galois (super-)groups}\label{app:3}
In this section $F$ denotes a field of coefficients of characteristic zero. We assume moreover that $F \subseteq K$, resp. $K \subseteq F$.
\subsection*{Noncommutative standard conjectures of type $C^+$ and $D$}
In this subsection we assume that $k$ is perfect. Given a smooth proper $k$-scheme $X$, let us write $\pi_X^i$ for the $i^{\mathrm{th}}$ K\"unneth projector of the crystalline cohomology $H^\ast_{\mathrm{crys}}(X)$ of $X$, $Z^\ast(X)_F$ for the $F$-vector space of algebraic cycles on $X$, and $Z^\ast(X)_F/_{\!\sim \mathrm{hom}}$ and $Z^\ast(X)_F/_{\!\sim \mathrm{num}}$ for the quotients of $Z^\ast(X)_F$ with respect to the homological and numerical equivalence relations, respectively. Following Grothendieck \cite{Grothendieck-conjectures} (see also Kleiman \cite{Kleim1, Kleim}), the standard conjecture of type $C^+$, denoted by $C^+(X)$, asserts that the K\"unneth projectors $\pi^+_X:=\sum_{i\,\mathrm{even}} \pi^i_X$ and $\pi^{-}_X:=\sum_{i\,\mathrm{odd}} \pi^i_X$ are algebraic, and the standard conjecture of type $D$, denoted by $D(X)$, asserts that $Z^\ast(X)_F/_{\!\sim \mathrm{hom}}=Z^\ast(X)_F/_{\!\sim \mathrm{num}}$. Both these conjectures hold when $\mathrm{dim}(X)\leq 2$. Moreover, the standard conjecture of type $C^+$ holds for abelian varieties (see Kleiman \cite[2. Appendix]{Kleim}) and also when $k$ is finite (see Katz-Messing \cite{KM}). Besides these cases (and some other cases scattered in the literature), the aforementioned important conjectures remain wide open.

Given a smooth proper dg category $\cA$, consider the K\"unneth projectors $\pi^\cA_+$ and $\pi^\cA_-$ of the $\bbZ/2$-graded $K$-vector space $TP_\pm(\cA)_{1/p}$ (when $F \subseteq K$), resp. the K\"unneth projectors $\pi^\cA_+$ and $\pi^\cA_-$ of the $\bbZ/2$-graded $F$-vector space $TP_\pm(\cA)_{1/p}\otimes_K F$ (when $K \subseteq F$). Consider also the following $F$-vector spaces\footnote{Consult \S\ref{sec:app2} for an alternative definition of $K_0(\cA)_F/_{\!\sim \mathrm{num}}$.}:
\begin{eqnarray*}
K_0(\cA)_F/_{\!\sim \mathrm{hom}}:= \Hom_{\NHom(k)_F}(U(k)_F, U(\cA)_F)\\ 
K_0(\cA)_F/_{\!\sim \mathrm{num}}:= \Hom_{\NNum(k)_F}(U(k)_F, U(\cA)_F)\,.
\end{eqnarray*}
Under these notations, Grothendieck's standard conjectures of type $C^+$ and $D$ admit the following noncommutative counterparts:

\vspace{0.1cm}

{\bf Conjecture $C^+_{\mathrm{nc}}(\cA)$:} {\it The K\"unneth projectors $\pi_+^\cA$ and $\pi_-^\cA$ are {\em algebraic}, \ie they belong to the $F$-vector subspace $\End_{\NHom(k)_F}(U(\cA)_F)$.}

\vspace{0.1cm}

{\bf Conjecture $D_{\mathrm{nc}}(\cA)$:} {\it The equality $K_0(\cA)_F/_{\!\sim\mathrm{hom}}=K_0(\cA)_F/_{\!\sim\mathrm{num}}$ holds.}

\vspace{0.1cm}

As proved in \cite[Thm.~1.1]{positiveCD}, given a smooth proper $k$-scheme $X$, we have the equivalences of conjectures $C^+(X) \Leftrightarrow C^+_{\mathrm{nc}}(\perf_\dg(X))$ and $D(X)\Leftrightarrow D_{\mathrm{nc}}(\perf_\dg(X))$. Intuitively speaking, this shows us that Grothendieck's standard conjectures of type $C^+$ and $D$ belong not only to the realm of algebraic geometry but also to the broad setting of smooth proper dg categories.
\subsection*{(Super-)Tannakian categories}
Recall from \S\ref{sub:numerical} the definition of the category of noncommutative numerical motives $\NNum(k)_F$. Note that this category is {\em not} Tannakian because the Euler characteristic of its objects is often negative. For example, given a smooth projective curve $C$ of genus $g$, we have $\mathrm{tr}(\id_{N\!\!M})=2-2g$ where $N\!\!M:=U(\perf_\dg(C))_F$.
\begin{theorem}\label{thm:Tannakian}
\begin{itemize}
\item[(i)] Assume that $C^+_{\mathrm{nc}}(\cA)$ holds for every smooth proper dg category $\cA$. Under this assumption, $\NNum(k)_F$ can be modified into a Tannakian category $\NNum^\dagger(k)_F$. Moreover, if $D_{\mathrm{nc}}(\cA)$ also holds for every smooth proper dg category $\cA$, then $\NNum^\dagger(k)_F$ comes equipped with an explicit fiber functor given by topological periodic cyclic homology.
\item[(ii)] The category $\NNum(k)_F$ is super-Tannakian in the sense of Deligne \cite{Deligne-Moscow}. If $F$ is algebraically closed, then $\NNum(k)_F$ is moreover neutral super-Tannakian.
\end{itemize}
\end{theorem}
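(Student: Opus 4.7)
The plan is to deduce both parts from Deligne's recognition theorems for (super-)Tannakian categories, applied to the semi-simple abelian category furnished by Theorem~\ref{thm:main}.

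For item~(i), the strategy is the standard Jannsen--Deligne--Milne sign modification. Under $C^+_{\mathrm{nc}}(\cA)$ for every smooth proper $\cA$, the Künneth projectors $\pi^\cA_+,\pi^\cA_-$ lift to algebraic idempotents in $\End_{\NHom(k)_F}(U(\cA)_F)$, hence also in $\End_{\NNum(k)_F}(U(\cA)_F)$. This yields a functorial $\bbZ/2$-grading $a = a^+ \oplus a^-$ on every object $a \in \NNum(k)_F$. I would define $\NNum^\dagger(k)_F$ to have the same $F$-linear rigid abelian symmetric monoidal structure as $\NNum(k)_F$ except that the commutativity constraint $c_{a,b}$ is twisted by the Koszul sign $(-1)$ on the summand $a^-\otimes b^-$. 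The categorical dimension of $a$ in $\NNum^\dagger(k)_F$ then becomes $\dim_K TP_+(a)_{1/p} + \dim_K TP_-(a)_{1/p} \in \bbZ_{\geq 0}$ (sum rather than difference). Since $\End_{\NNum^\dagger(k)_F}({\bf 1}) = F$ by \eqref{eq:Homs}, and the category is rigid and semi-simple, Deligne's Tannakian recognition criterion (every object has non-negative integer dimension) yields that $\NNum^\dagger(k)_F$ is Tannakian.

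Assuming moreover $D_{\mathrm{nc}}(\cA)$ for every $\cA$, one has $\NHom(k)_F = \NNum(k)_F$, so \eqref{eq:TP1} (resp.~\eqref{eq:TP11}) descends to an $F$-linear symmetric monoidal functor $TP_\pm(-)_{1/p}\colon \NNum(k)_F \to \mathrm{vect}_{\bbZ/2}(K)$, where the target carries the super-symmetry (Koszul) structure. Performing the same sign modification on both sides turns the source into $\NNum^\dagger(k)_F$ and the target into $\mathrm{vect}_{\bbZ/2}(K)$ with the trivial symmetry, so composition with the forgetful functor to $\mathrm{vect}(K)$ produces an $F$-linear symmetric monoidal functor $\NNum^\dagger(k)_F \to \mathrm{vect}(K)$. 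Under $D_{\mathrm{nc}}$, the kernel of $TP_\pm$ on $\NChow(k)_F$ coincides with $\cN$, so the induced functor is faithful; exactness is automatic for $F$-linear functors out of a semi-simple abelian category. This realizes $TP_\pm(-)_{1/p}$ as an explicit fiber functor.

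For item~(ii), I would appeal directly to Deligne's super-Tannakian recognition theorem from \cite{Deligne-Moscow}. By Theorem~\ref{thm:main}, $\NNum(k)_F$ is abelian semi-simple; by \S\ref{sub:numerical} it is $F$-linear, rigid, symmetric monoidal, with $\End_{\NNum(k)_F}(U(k)_F) = F$ via \eqref{eq:Homs}. In characteristic zero, any such category in which every object is annihilated by some Schur functor is super-Tannakian. Schur-finiteness here follows from the finite-dimensionality of Hom spaces (a consequence of Theorem~\ref{thm:main2}) combined with semi-simplicity, since in such a category categorical dimensions are bounded integers and Schur functors attached to partitions with sufficiently many parts must vanish on any given object. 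When $F$ is algebraically closed, Deligne's theorem moreover guarantees the existence of a super-fiber functor to $\mathrm{svect}(F)$, giving the neutrality. The principal obstacle is the careful verification of the hypotheses of Deligne's recognition theorems: in (i), the coherence of the sign-modified commutativity constraint and the positivity of the modified dimensions (both guaranteed once $C^+_{\mathrm{nc}}$ provides the algebraic Künneth projectors); in (ii), confirming Schur-finiteness uniformly across all objects, where the combination of semi-simplicity with the boundedness of Hom-dimensions from Theorem~\ref{thm:main2} is the essential input.
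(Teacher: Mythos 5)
Your item (i) follows essentially the same route as the paper, which simply defers to the proof of \cite[Thms.~1.4 and 1.6]{JEMS}: that proof is precisely the sign modification of the commutativity constraint via the algebraic K\"unneth projectors, Deligne's criterion that a tensor category whose objects all have non-negative integer dimension is Tannakian, and, under $D_{\mathrm{nc}}$, the identification $\NHom(k)_F=\NNum(k)_F$ which lets \eqref{eq:TP1} descend to a fiber functor. This part is fine in outline.

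Item (ii) contains a genuine gap. You assert that Schur-finiteness of $\NNum(k)_F$ follows from semi-simplicity together with finite-dimensionality of Hom spaces, ``since in such a category categorical dimensions are bounded integers and Schur functors attached to partitions with sufficiently many parts must vanish on any given object.'' Neither half of this is justified: in an abstract rigid semi-simple tensor category there is no reason for categorical dimensions to be integers (that already requires a realization), and even granting integrality there is no recognition theorem of the form ``integer dimensions $\Rightarrow$ Schur-finite''---Deligne's criteria require either \emph{non-negative} integer dimensions (which fails here, since $\dim U(\perf_\dg(C))_F=2-2g$ for a genus~$g$ curve) or moderate growth of the lengths of tensor powers, and finite-dimensional Hom spaces do not imply the latter. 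Worse, the asserted principle is false: Deligne's interpolation categories $\mathrm{Rep}(S_t)$ with $t\notin\bbZ$ are $F$-linear, abelian semi-simple, rigid symmetric monoidal with $\End(\mathbf{1})=F$ and finite-dimensional Hom spaces, yet are not Schur-finite. The paper's proof instead uses the realization \eqref{eq:TP1}: for $a\in\NChow(k)_F$ the super vector space $TP_\pm(a)_{1/p}$ is finite-dimensional, hence killed by some Schur functor $S_\lambda$; since Schur functors commute with symmetric monoidal functors, the idempotent defining $S_\lambda(a)$ lies in $\mathrm{Ker}(TP_\pm)\subseteq\cN$, so $S_\lambda(a)=0$ in $\NNum(k)_F$. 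You also omit the case of non-perfect $k$, where $TP$ is unavailable and the paper reduces to the perfect closure via the fully faithful functor $-\otimes_k k^{\mathrm{perf}}\colon\NNum(k)_F\to\NNum(k^{\mathrm{perf}})_F$.
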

\begin{proof}
We address solely the case $F \subseteq K$; the proof of the other case is similar. 

The proof of item (i) is similar to the proof of \cite[Thms.~1.4 and 1.6]{JEMS} (with $k$ of characteristic zero): simply replace the periodic cyclic homology functor (consult \cite[Thm.~9.2]{JEMS}) by the functor \eqref{eq:TP1} and run the same proof.

We now prove item (ii). By construction, the category $\NNum(k)_F$ is $F$-linear, additive, and rigid symmetric monoidal. Thanks to Theorem \ref{thm:main}, it is moreover abelian semi-simple. Hence, making use of Deligne's characterization of (neutral) super-Tannakian categories (see \cite[Thm.~0.6]{Deligne-Moscow}), it suffices to show that the category $\NNum(k)_F$ is Schur-finite. In the case where $k$ is perfect, the proof that the category $\NNum(k)_F$ is Schur-finite is similar to the proof of \cite[Prop.~8.1]{JEMS}: simply replace the periodic cyclic homology functor (consult \cite[Thm.~9.2]{JEMS}) by the functor \eqref{eq:TP1} and run the same proof. Let us assume now that $k$ is arbitrary. As explained in \cite[\S4.10.2]{book}, the functor \eqref{eq:closure} descends to noncommutative numerical motives $-\otimes_k k^{\mathrm{perf}}\colon \NNum(k)_F \to \NNum(k^{\mathrm{perf}})_F$. Moreover, since the field extension $k \subseteq k^{\mathrm{perf}}$ is purely inseparable and $F$ is of characteristic zero, the preceding functor is fully-faithful; see \cite[Prop.~4.11]{Rigidity}. Therefore, using the fact that the field $k^{\mathrm{perf}}$ is perfect, we conclude from above that the category $\NNum(k)_F$ is Schur-finite.
\end{proof}
\begin{corollary}\label{cor:super}
\begin{itemize}
\item[(i)] Assume that $C^+(X)$ holds for every smooth proper $k$-scheme $X$. Under this assumption, $\Num(k)_F$ can be modified into a Tannakian category $\Num^\dagger(k)_F$. Moreover, if $D(X)$ also holds for every smooth proper $k$-scheme $X$, then $\Num^\dagger(k)_F$ comes equipped with an explicit fiber functor given by crystalline cohomology.
\item[(ii)] The category $\Num(k)_F$ is super-Tannakian in the sense of Deligne \cite{Deligne-Moscow}. If $F$ is algebraically closed, then $\Num(k)_F$ is moreover neutral super-Tannakian.
\end{itemize}
\end{corollary}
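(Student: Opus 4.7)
The plan is to deduce Corollary \ref{cor:super} from Theorem \ref{thm:Tannakian} by transporting structure across the bridge \eqref{eq:bridge}. Recall that $\Phi\colon \Chow(k)_F/_{\!\!-\otimes F(1)} \to \NChow(k)_F$ is $F$-linear, fully faithful, and symmetric monoidal. Composing with $\gamma$ in the source and with the quotient $\NChow(k)_F \to \NNum(k)_F$ in the target gives an $F$-linear symmetric monoidal functor $\Chow(k)_F \to \NNum(k)_F$. By the same Andre--Kahn mechanism used in the proof of Corollary \ref{cor:main} (applied to $\NNum(k)_F$, which is semi-simple and in particular has no nilpotent endomorphisms of nonzero trace), this functor kills exactly $\cN$ in $\Chow(k)_F$, hence descends to a faithful symmetric monoidal functor $\Num(k)_F/_{\!\!-\otimes F(1)} \hookrightarrow \NNum(k)_F$ (faithfulness follows from fully faithfulness of $\Phi$ together with the fact that categorical traces, being elements of $F$, are preserved).

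For item (ii), by Corollary \ref{cor:main} the category $\Num(k)_F$ is $F$-linear, additive, rigid symmetric monoidal, and abelian semi-simple. By Deligne's characterization \cite{Deligne-Moscow} of (neutral) super-Tannakian categories, it suffices to verify Schur-finiteness. Applying Theorem \ref{thm:Tannakian}(ii), $\NNum(k)_F$ is Schur-finite. Since Schur functors commute with symmetric monoidal functors, faithfulness of the functor $\Num(k)_F/_{\!\!-\otimes F(1)} \hookrightarrow \NNum(k)_F$ pulls Schur-finiteness back to the orbit category; and the orbit-category projection $\gamma\colon \Num(k)_F \to \Num(k)_F/_{\!\!-\otimes F(1)}$ reflects vanishing of objects (an object $Y$ is zero in the orbit category iff its identity vanishes, iff $Y=0$ in $\Num(k)_F$, since the unit-degree component of the orbit-category Hom recovers $\Hom_{\Num(k)_F}$). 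Hence $\Num(k)_F$ is Schur-finite, and when $F$ is algebraically closed Deligne's theorem upgrades this to the neutral super-Tannakian statement.

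For item (i), the strategy parallels Theorem \ref{thm:Tannakian}(i). Using \cite[Thm.~1.1]{positiveCD}, the hypothesis $C^+(X)$ for every smooth proper $X$ translates into $C^+_{\mathrm{nc}}(\perf_\dg(X))$ for every such $X$, and under the bridge the K\"unneth projectors are matched: via Scholze's identification \eqref{eq:Scholze}, $\pi^\pm_{\perf_\dg(X)}$ corresponds to $\sum_{i\,\mathrm{even/odd}}\pi_X^i$. I would then modify the commutativity constraint of $\Num(k)_F$ by $\pi_X^+ - \pi_X^-$ in the classical Jannsen--Kleiman fashion to produce $\Num^\dagger(k)_F$, and verify the Tannakian axiom (Euler characteristics are nonnegative integers) by running the argument of Theorem \ref{thm:Tannakian}(i) restricted to the full rigid symmetric monoidal subcategory of $\NChow(k)_F$ generated by images of smooth proper schemes. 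Finally, under $D(X)$, the NC fiber functor $TP_\pm$ of Theorem \ref{thm:Tannakian}(i), restricted to this subcategory and transported across $\Phi$, is identified via \eqref{eq:Scholze} with crystalline cohomology, yielding the desired fiber functor on $\Num^\dagger(k)_F$.

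The main obstacle is that Theorem \ref{thm:Tannakian}(i) assumes $C^+_{\mathrm{nc}}(\cA)$ for \emph{every} smooth proper dg category $\cA$, whereas $C^+(X)$ supplies this only when $\cA=\perf_\dg(X)$. I therefore cannot invoke Theorem \ref{thm:Tannakian}(i) as a black box; I must rerun its proof inside the rigid symmetric monoidal subcategory of $\NChow(k)_F$ generated by motives of schemes, then descend to $\Chow(k)_F$ through the bridge. A secondary bookkeeping point is that $TP_\pm$ takes values in $\mathrm{vect}_{\bbZ/2}(K)$, while a fiber functor on $\Num^\dagger(k)_F$ should land in ungraded $K$-vector spaces; the modification of commutativity is precisely what absorbs the $\bbZ/2$-grading into a naive symmetry, but this requires careful tracking of Koszul signs when comparing the Tate modification on both sides of the bridge.
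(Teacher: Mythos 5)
Your proposal is correct and follows essentially the same route as the paper: for item (ii) the paper likewise invokes Corollary \ref{cor:main} for semi-simplicity and pulls Schur-finiteness back from $\NNum(k)_F$ along the faithful composite $\Phi\circ\gamma\colon \Num(k)_F \to \Num(k)_F/_{\!\!-\otimes F(1)} \to \NNum(k)_F$, and for item (i) it reruns the argument of Theorem \ref{thm:Tannakian}(i) at the commutative level, replacing $C^+_{\mathrm{nc}}(\cA)$, $D_{\mathrm{nc}}(\cA)$ by $C^+(X)$, $D(X)$ and replacing the functor \eqref{eq:TP1} by the composite $TP_\pm\circ\Phi\circ\gamma$, which Scholze's identification \eqref{eq:Scholze} recognizes as $\bbZ/2$-graded crystalline cohomology. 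Your observation that Theorem \ref{thm:Tannakian}(i) cannot be used as a black box (since $C^+(X)$ only yields $C^+_{\mathrm{nc}}(\perf_\dg(X))$) is exactly the point, and the paper resolves it in the same way you do, by running the proof directly on the commutative categories with the crystalline realization.
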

\begin{proof}
We address solely the case $F \subseteq K$; the proof of the other case is similar. 

The proof of item (i) is similar to the proof of Theorem \ref{thm:Tannakian}(i): simply replace the conjectures $C^+_{\mathrm{nc}}(\cA)$ and $D_{\mathrm{nc}}(\cA)$ by the conjectures $C^+(X)$ and $D(X)$, the categories $\NChow(k)_F$ and $\NNum(k)_F$ by the categories $\Chow(k)_F$ and $\Num(k)_F$, and the functor \eqref{eq:TP1} by the following composition:
\begin{equation}\label{eq:composition-Chow}
\Chow(k)_F \stackrel{\gamma}{\too} \Chow(k)_F/_{\!\!-\otimes F(1)} \stackrel{\Phi}{\too} \NChow(k)_F \stackrel{\eqref{eq:TP1}}{\too} \mathrm{vect}_{\bbZ/2}(K)\,.
\end{equation}
Note that thanks to the natural isomorphism \eqref{eq:Scholze}, the composition \eqref{eq:composition-Chow} corresponds to the functor that sends a Chow motive $\mathfrak{h}(X)_F$ to the finite-dimensional $\bbZ/2$-graded $K$-vector space $(\bigoplus_{i\,\mathrm{even}}H^i_{\mathrm{crys}}(X), \bigoplus_{i\,\mathrm{odd}}H^i_{\mathrm{crys}}(X))$.

We now prove item (ii). By construction, the category $\Num(k)_F$ is $F$-linear, additive, and rigid symmetric monoidal. Thanks to Corollary \ref{cor:main}, it is moreover abelian semi-simple. Therefore, similarly to the proof of Theorem \ref{thm:Tannakian}(ii), it suffices to show that $\Num(k)_F$ is Schur-finite. As explained in \cite[\S4.6]{book}, the functor $\Phi$ (consult diagram \eqref{eq:bridge}) descends to the categories of numerical motives. Therefore, we can consider the composition $\Phi\circ \gamma\colon \Num(k)_F \to \Num(k)_F/_{\!\!- \otimes F(1)} \to \NNum(k)_F$. Since the $F$-linear symmetric monoidal functor $\gamma$, resp. $\Phi$, is faithful, resp. fully-faithful, and the category $\NNum(k)_F$ is Schur-finite, we hence conclude that the category $\Num(k)_F$ is also Schur-finite.
\end{proof}
\subsection*{Noncommutative motivic Galois (super-)groups}
Assume that $C^+_{\mathrm{nc}}(\cA)$ and $D_{\mathrm{nc}}(\cA)$ hold for every smooth proper dg category $\cA$. By combining Theorem \ref{thm:Tannakian}(i) with the Tannakian formalism applied to the fiber functor (when $F\subseteq K$)
\begin{equation}\label{eq:fiber1}
\NNum^\dagger(k)_F \stackrel{TP_\pm(-)_{1/p}}{\too} \mathrm{vect}_{\bbZ/2}(K) \stackrel{\mathrm{forget}}{\too} \mathrm{vect}(K)\,,
\end{equation} 
resp. to the fiber functor (when $K\subseteq F$)
\begin{equation}\label{eq:fiber2}
\NNum^\dagger(k)_F \stackrel{TP_\pm(-)_{1/p}\otimes_K F}{\too} \mathrm{vect}_{\bbZ/2}(F) \stackrel{\mathrm{forget}}{\too} \mathrm{vect}(F)\,,
\end{equation} 
we obtain an affine group $K$-scheme, resp. affine group $F$-scheme, $\mathrm{Gal}(\NNum^\dagger(k)_F)$ called the {\em noncommutative motivic Galois group}. Since the Tannakian category $\NNum^\dagger(k)_F$ is not only abelian but moreover semi-simple, the noncommutative motivic Galois group is moreover {\em pro-reductive}, \ie its unipotent radical is trivial. 

Similarly, since $C^+_{\mathrm{nc}}(\perf_\dg(X)) \Rightarrow C^+(X)$ and $D_{\mathrm{nc}}(\perf_\dg(X)) \Rightarrow D(X)$ for every smooth proper $k$-scheme $X$, by combining Corollary \ref{cor:super}(i) with the Tannakian formalism applied to the fiber functor (when $F \subseteq K$)
\begin{equation}\label{eq:fiber11} \Num^\dagger(k)_F \stackrel{\gamma}{\too} \NNum^\dagger(k)_F/_{\!\!-\otimes F(1)} \stackrel{\Phi}{\too} \NNum^\dagger(k)_F \stackrel{\eqref{eq:fiber1}}{\too} \mathrm{vect}(K)\,,
\end{equation}
resp. to the fiber functor (when $K \subseteq F$)
\begin{equation}\label{eq:fiber22}
\Num^\dagger(k)_F \stackrel{\gamma}{\too} \NNum^\dagger(k)_F/_{\!\!-\otimes F(1)} \stackrel{\Phi}{\too} \NNum^\dagger(k)_F \stackrel{\eqref{eq:fiber2}}{\too} \mathrm{vect}(F)\,,
\end{equation}
we obtain an affine group $K$-scheme, resp. affine group $F$-scheme, $\mathrm{Gal}(\Num^\dagger(k)_F)$ called the {\em motivic Galois group}. Note that thanks to the natural isomorphism \eqref{eq:Scholze}, the composition \eqref{eq:fiber11}, resp. \eqref{eq:fiber22},  corresponds to the functor that sends a numerical motive $\mathfrak{h}(X)_F$ to the finite-dimensional $K$-vector space $\bigoplus_i H^i_{\mathrm{crys}}(X)$, resp. to the finite-dimensional $F$-vector space $\bigoplus_i H^i_{\mathrm{crys}}(X)\otimes_K F$. Finally, since the Tannakian category $\Num^\dagger(k)_F$ is not only abelian but moreover semi-simple, the noncommutative motivic Galois group is moreover pro-reductive.

Assume now that $F$ is algebraically closed. By combining Theorem \ref{thm:Tannakian}(ii) with Deligne's super-Tannakian formalism \cite{Deligne-Moscow}, applied to a super-fiber functor 
\begin{equation}\label{eq:super-fiber}
 \NNum(k)_F \too \mathrm{vect}_{\bbZ/2}(F)\,,
 \end{equation} 
we obtain an affine super-group $F$-scheme $\mathrm{sGal}(\NNum(k)_F)$ called the {\em noncommutative motivic Galois super-group}; in the case where $F$ is not algebraically closed, the noncommutative motivic Galois super-group is defined only over a (very large) commutative $F$-algebra. Similarly, by combining Corollary \ref{cor:super}(ii) with Deligne's super-Tannakian formalism, applied to the super-fiber functor
\begin{equation}\label{eq:fiber-last}
\Num(k)_F \stackrel{\gamma}{\too} \Num(k)_F/_{\!\!-\otimes F(1)} \stackrel{\Phi}{\too} \NNum(k)_F \stackrel{\eqref{eq:super-fiber}}{\too} \mathrm{vect}_{\bbZ/2}(F)\,,
\end{equation}
we obtain an affine super-group $F$-scheme $\mathrm{sGal}(\Num(k)_F)$ called the {\em motivic Galois super-group}. Our fourth main result is the following:
\begin{theorem}\label{thm:Galois}
\begin{itemize}
\item[(i)] Assume that $C^+_{\mathrm{nc}}(\cA)$ and $D_{\mathrm{nc}}(\cA)$ hold for every smooth proper dg category $\cA$. Under these assumptions, the following composition of functors $\Phi\circ \gamma \colon \Num^\dagger(k)_F \to \Num^\dagger(k)_F/_{\!\!-\otimes F(1)} \to \NNum^\dagger(k)_F$ induces a faithfully flat morphism of affine group $K$-schemes (when $F \subseteq K$), resp. affine group $F$-schemes (when $K \subseteq F$),
$$ \mathrm{Gal}(\NNum^\dagger(k)_F) \stackrel{(\Phi \circ \gamma)^\ast}{\too} \mathrm{Kernel} (\mathrm{Gal}(\Num^\dagger(k)_F) \stackrel{t^\ast}{\too} \bbG_m)\,,$$
where $\bbG_m$ stands for the multiplicative (super-)group scheme and $t$ for the inclusion of the category of Tate motives into the category of numerical motives.
\item[(ii)]  Assume that $F$ is algebraically closed. Under this assumption, the following composition of functors $\Phi\circ \gamma\colon \Num(k)_F \to \Num(k)_F/_{\!\!-\otimes F(1)} \to \NNum(k)_F$ induces a faithfully flat morphism of affine super-group $F$-schemes:
$$ \mathrm{sGal}(\NNum(k)_F) \stackrel{(\Phi \circ \gamma)^\ast}{\too} \mathrm{Kernel} (\mathrm{sGal}(\Num(k)_F) \stackrel{t^\ast}{\too} \bbG_m)\,.$$
\end{itemize}
\end{theorem}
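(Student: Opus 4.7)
The plan is to mimic the strategy of the characteristic-zero analogue established by the author in \cite[\S 9]{JEMS}, replacing the periodic cyclic homology fiber functor by the topological periodic cyclic homology fiber functor from Theorem \ref{thm:TP}, and then to apply the Tannakian, resp.\ super-Tannakian, formalism to the composition $\Phi \circ \gamma$.

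First I would verify that $\Phi \circ \gamma$ sends the Tate motive $F(1)$ to the $\otimes$-unit. This is automatic from the orbit-category construction: $\gamma$ carries the $2$-universal natural isomorphism $\gamma \circ (- \otimes F(1)) \Rightarrow \gamma$, so $(\Phi \circ \gamma)(F(1)) \simeq \Phi(\gamma(F(0))) \simeq U(k)_F$. Via Tannakian duality, an object mapping to the unit forces the induced homomorphism of affine group (super-)schemes to factor through the subgroup that acts trivially on that object, namely through $\mathrm{Kernel}(t^\ast)$. Hence $(\Phi \circ \gamma)^\ast$ is well defined as a morphism into $\mathrm{Kernel}(t^\ast)$ in both parts (i) and (ii).

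To establish faithful flatness I would then invoke Deligne's criterion: a tensor functor between (neutral) (super-)Tannakian categories induces a faithfully flat morphism on the associated group (super-)schemes if and only if it is fully faithful and every subobject (in the target) of an object in its essential image is, up to isomorphism, in the essential image. Full faithfulness of $\Phi \circ \gamma$ follows from the full faithfulness of $\Phi$ in the bridge diagram \eqref{eq:bridge} (see \cite[\S 4.2]{book}) combined with the $2$-universal property of $\gamma$ on the orbit category. The subobject condition reduces, thanks to the semi-simplicity of the categories $\NNum^\dagger(k)_F$ and $\Num^\dagger(k)_F$ (resp.\ $\NNum(k)_F$ and $\Num(k)_F$) supplied by Theorem \ref{thm:main} and Corollary \ref{cor:main}, to lifting an idempotent on $\Phi(\gamma(X))$ to an idempotent on some Tate-twisted version of $X$ inside $\Num^\dagger(k)_F$.

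The main obstacle I foresee is precisely this final idempotent-lifting step. Via full faithfulness of $\Phi$, an idempotent $e$ on $\Phi(\gamma(X))$ pulls back to an element of $\bigoplus_{n \in \bbZ} \Hom_{\Num^\dagger(k)_F}(X, X \otimes F(n))$, and one must show that, up to a Tate twist, this element corresponds to an honest idempotent endomorphism of some $X \otimes F(m)$ whose image in the orbit category recovers $e$. In characteristic zero this is carried out in \cite[\S 9]{JEMS} by exploiting the polarization-induced structure of $\Num^\dagger(k)_F$; the same argument should transport verbatim to positive characteristic once the abelian semi-simplicity supplied by Theorem \ref{thm:main} and Corollary \ref{cor:main} is in hand. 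Part (ii) then follows by running the same plan inside Deligne's super-Tannakian formalism \cite{Deligne-Moscow}, using the super-fiber functor \eqref{eq:super-fiber} and \eqref{eq:fiber-last} in place of \eqref{eq:fiber1}/\eqref{eq:fiber2} and \eqref{eq:fiber11}/\eqref{eq:fiber22}.
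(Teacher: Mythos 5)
There is a genuine gap, and it sits exactly at the step you flag as the crux. Your plan is to apply Deligne's criterion (fully faithful $+$ essential image closed under subobjects $\Rightarrow$ faithfully flat) directly to the composite $\Phi\circ\gamma\colon \Num^\dagger(k)_F\to\NNum^\dagger(k)_F$, and you assert that $\Phi\circ\gamma$ is fully faithful ``by the $2$-universal property of $\gamma$.'' This is false: by construction of the orbit category, $\Hom(\gamma(X),\gamma(Y))=\bigoplus_{n\in\bbZ}\Hom_{\Num^\dagger(k)_F}(X,Y\otimes F(1)^{\otimes n})$, so $\gamma$ is faithful but very far from full (e.g. $\Hom_{\NNum^\dagger(k)_F}(U(k)_F,\Phi\gamma(\mathfrak{h}(X)_F))$ is the whole graded numerical Chow group of $X$, not just its degree-zero part). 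This failure of fullness is not a technicality --- it is precisely the reason the image of $(\Phi\circ\gamma)^\ast$ is the proper subgroup $\mathrm{Ker}(t^\ast)$ rather than all of $\mathrm{Gal}(\Num^\dagger(k)_F)$; a criterion that would yield faithful flatness onto the full group cannot be the right tool. What is missing is a Tannakian identification of $\mathrm{Ker}(t^\ast)$ itself, namely that its representation category is $\mathrm{Idem}(\Num^\dagger(k)_F/_{\!\!-\otimes F(1)})$, equivalently the exactness of
$$1 \too \mathrm{Aut}^\otimes(\omega) \stackrel{\gamma^\ast}{\too} \mathrm{Aut}^\otimes(\omega\circ\gamma) \stackrel{t^\ast}{\too} \bbG_m \too 1\,.$$
The paper proves this as a standalone statement (Proposition \ref{prop:aux}) using the explicit orbit-category Hom-sets: $\mathrm{Aut}^\otimes(\omega\circ\gamma\circ t)\simeq\bbG_m$ via the equivalence $\mathrm{vect}_\bbZ(F)\simeq\langle F(1)\rangle$; $\gamma^\ast$ is a closed immersion because every object of the idempotent-completed orbit category is a summand of an object in the image of $\gamma$; and $\mathrm{Ker}(t^\ast)\subseteq\mathrm{Im}(\gamma^\ast)$ because an automorphism of $\omega\circ\gamma$ that is the identity on $t(F(1))$ descends to an automorphism of $\omega$. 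Only after this does full faithfulness enter, and it is full faithfulness of $\Phi$ \emph{alone} (together with semi-simplicity of the orbit category, supplied by Jannsen's lemma) that yields the faithfully flat morphism $\mathrm{Gal}(\NNum^\dagger(k)_F)\to\mathrm{Gal}(\mathrm{Idem}(\Num^\dagger(k)_F/_{\!\!-\otimes F(1)}))$, which is then composed with the exact sequence.

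Your fallback --- transporting the argument of \cite[\S 9]{JEMS} ``verbatim'' and resolving the idempotent-lifting obstacle via ``polarization-induced structure'' --- also does not close the gap. The paper explicitly notes that the JEMS proof relied on Deligne--Milne's theory of Tate triples and that the present proof is designed to avoid it; moreover, polarizations on $\Num^\dagger(k)_F$ in positive characteristic are conjectural, so they cannot be invoked in an unconditional proof. The idempotent-lifting issue you foresee is real, but it is resolved here by the semi-simplicity transfer $\NNum^\dagger(k)_F\Rightarrow\mathrm{Idem}(\Num^\dagger(k)_F/_{\!\!-\otimes F(1)})$ through the fully faithful $\Phi$, not by polarizations.
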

Theorem \ref{thm:Galois} (and Theorem \ref{thm:Tannakian}) was envisioned by Kontsevich in his seminal talk \cite{IAS}. Intuitively speaking, it shows us that the ``$\otimes$-symmetries'' of the commutative world which can be lifted to the noncommutative world are precisely those which become trivial when restricted to Tate motives.
\subsection*{Proof of Theorem \ref{thm:Galois}}
We start with a general result of independent interest. Let $(\mathrm{D}, \otimes, {\bf 1})$ be an $F$-linear, abelian semi-simple, rigid symmetric monoidal category equipped with a $\otimes$-invertible object $\cO \in \mathrm{D}$. Consider the composition
\begin{equation}\label{eq:composition-aux}
\langle \cO \rangle \stackrel{t}{\hookrightarrow} \mathrm{D} \stackrel{\gamma}{\too} \mathrm{Idem}(\mathrm{D}/_{\!\!-\otimes \cO})\,,
\end{equation}
where $\mathrm{Idem}(\mathrm{D}/_{\!\!-\otimes \cO})$ stands for the idempotent completion of the orbit category $\mathrm{D}/_{\!\!-\otimes \cO}$ and $\langle \cO \rangle$ stands for the smallest $F$-linear, abelian, rigid symmetric monoidal subcategory of $\mathrm{D}$ containing the $\otimes$-invertible object $\cO$.
\begin{proposition}\label{prop:aux}
Assume that $\Hom_{\mathrm{D}}(\cO^{\otimes i}, \cO^{\otimes j})\simeq \delta_{ij} \cdot F$ and that the category $\mathrm{Idem}(\mathrm{D}/_{\!\!-\otimes \cO})$ is abelian semi-simple.
\begin{itemize}
\item[(i)] Given a $F$-linear, faithful, symmetric monoidal functor $\omega\colon \mathrm{Idem}(\mathrm{D}/_{\!\!-\otimes \cO}) \to \mathrm{vect}(K)$ (when $F \subseteq K$), resp. a $F$-linear, faithful, symmetric monoidal functor $\omega\colon \mathrm{Idem}(\mathrm{D}/_{\!\!-\otimes \cO})\to\mathrm{vect}(F)$ (when $K \subseteq F$), the composition \eqref{eq:composition-aux} induces the short exact sequence of affine group $K$-schemes, resp. $F$-schemes,
\begin{equation}\label{eq:SES1}
1 \too \mathrm{Aut}^\otimes(\omega) \stackrel{\gamma^\ast}{\too} \mathrm{Aut}^\otimes(\omega \circ \gamma) \stackrel{t^\ast}{\too} \bbG_m \too 1\,.
\end{equation}
\item[(ii)] Assume moreover that $F$ is algebraically closed. Given a $F$-linear, faithful, symmetric monoidal functor $\omega\colon \mathrm{Idem}(\mathrm{D}/_{\!\!-\otimes \cO}) \to \mathrm{vect}_{\bbZ/2}(F)$, the composition \eqref{eq:composition-aux} induces the short exact sequence of affine super-group $F$-schemes:
\begin{equation}\label{eq:SES}
1 \too \mathrm{Aut}^\otimes(\omega) \stackrel{\gamma^\ast}{\too} \mathrm{Aut}^\otimes(\omega \circ \gamma) \stackrel{t^\ast}{\too} \bbG_m \too 1\,.
\end{equation}
\end{itemize}
\end{proposition}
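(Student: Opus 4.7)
The strategy is to verify the axioms of a short exact sequence of affine group (super-)schemes by applying the standard Tannakian exactness criteria (in the spirit of Deligne--Milne) to the chain of tensor categories $\langle\cO\rangle\xrightarrow{t}\mathrm{D}\xrightarrow{\gamma}\mathrm{Idem}(\mathrm{D}/_{\!\!-\otimes\cO})$ equipped with the compatible fiber functors $\omega\circ\gamma\circ t$, $\omega\circ\gamma$, and $\omega$. The hypothesis $\Hom_{\mathrm{D}}(\cO^{\otimes i},\cO^{\otimes j})\simeq\delta_{ij}\cdot F$ combined with the abelian semi-simplicity of $\mathrm{D}$ implies that each $\cO^{\otimes n}$ is simple and that the family $\{\cO^{\otimes n}\}_{n\in\bbZ}$ is pairwise non-isomorphic, so that $\langle\cO\rangle$ is equivalent as a rigid symmetric monoidal category to $\Rep(\bbG_m)$ (via $\cO$ corresponding to the standard character). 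Consequently, $\mathrm{Aut}^\otimes(\omega\circ\gamma\circ t)\simeq\bbG_m$ for any fiber functor. Moreover, the universal isomorphism $\gamma\circ(-\otimes\cO)\Rightarrow\gamma$ (see \eqref{eq:functor}) shows that $\gamma\circ t$ factors through the inclusion $\mathrm{Vect}(F)\hookrightarrow\mathrm{Idem}(\mathrm{D}/_{\!\!-\otimes\cO})$, so the composition $t^*\circ\gamma^*$ is the trivial map.

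It then remains to check three things: (a) the functor $t$ is fully faithful with image closed under subobjects in $\mathrm{D}$; (b) every object of $\mathrm{Idem}(\mathrm{D}/_{\!\!-\otimes\cO})$ is a subobject of $\gamma(X)$ for some $X\in\mathrm{D}$; and (c) an object $X\in\mathrm{D}$ satisfies $\gamma(X)\simeq{\bf 1}^{\oplus r}$ in $\mathrm{Idem}(\mathrm{D}/_{\!\!-\otimes\cO})$ if and only if $X$ lies in $\langle\cO\rangle$. Condition (a) follows from semi-simplicity together with the simplicity and pairwise non-isomorphism of the $\cO^{\otimes n}$, which make any subobject of $\bigoplus_i\cO^{\otimes n_i}$ again a direct sum of $\cO^{\otimes n}$'s. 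Condition (b) is immediate from the definition of the idempotent completion of the orbit category: every object is a direct summand, hence (by semi-simplicity of the target) a subobject, of some $\gamma(X)$. Once (a)--(c) are established, the standard criteria give that $\gamma^*$ is a closed immersion, $t^*$ is faithfully flat, and that the image of $\gamma^*$ coincides with the kernel of $t^*$, yielding the exactness of \eqref{eq:SES1}.

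The main obstacle is condition (c). The direction $\Leftarrow$ is immediate from the canonical isomorphism $\gamma(\cO^{\otimes n})\simeq\gamma({\bf 1})$. For the converse, I would decompose $X=\bigoplus_i X_i$ into simples in $\mathrm{D}$ and reduce to the simple case: each $\gamma(X_i)$ is a direct summand of $\gamma(X)\simeq{\bf 1}^{\oplus r}$, hence isomorphic to ${\bf 1}$ in $\mathrm{Idem}(\mathrm{D}/_{\!\!-\otimes\cO})$. Writing the iso $\gamma(X_i)\simeq{\bf 1}$ and its inverse as finite sums $\sum_n\phi_n$ and $\sum_m\psi_m$ with $\phi_n\colon X_i\to\cO^{\otimes n}$ and $\psi_m\colon{\bf 1}\to X_i\otimes\cO^{\otimes m}$, and expanding the relation $\psi\circ\phi=\id_{X_i}$ inside the orbit category, at least one component $\phi_{n_0}$ must be non-zero; since $X_i$ is simple and each $\cO^{\otimes n_0}$ is simple with $\End\simeq F$, such a non-zero morphism is automatically an isomorphism, yielding $X_i\simeq\cO^{\otimes n_0}$. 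This step is where the hypothesis $\Hom_\mathrm{D}(\cO^{\otimes i},\cO^{\otimes j})\simeq\delta_{ij}F$ is used essentially. Finally, part (ii) proceeds by the identical argument with Deligne's super-Tannakian formalism from \cite{Deligne-Moscow} substituted for the classical Tannakian formalism.
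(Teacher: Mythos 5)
Your proposal is correct, and the overall skeleton matches the paper's: both identify $\langle\cO\rangle\simeq\mathrm{vect}_{\bbZ}(F)\simeq\Rep(\bbG_m)$ from the hypothesis $\Hom_{\mathrm{D}}(\cO^{\otimes i},\cO^{\otimes j})\simeq\delta_{ij}\cdot F$, deduce $\mathrm{Aut}^\otimes(\omega\circ\gamma\circ t)\simeq\bbG_m$, get triviality of $t^\ast\circ\gamma^\ast$ from $(\gamma\circ t)(\cO)\simeq{\bf 1}$, faithful flatness of $t^\ast$ from full faithfulness of $t$, and the closed-immersion property of $\gamma^\ast$ from the fact that every object of $\mathrm{Idem}(\mathrm{D}/_{\!\!-\otimes\cO})$ is a summand of something in the image of $\gamma$. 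Where you genuinely diverge is the exactness in the middle. The paper argues directly on points of the group schemes: given a $\otimes$-automorphism $\eta$ of $\omega\circ\gamma$ with $\eta_{t(\cO)}=\id$, the tensor compatibility forces $\eta_{b\otimes\cO^{\otimes n}}=\eta_b\otimes\id$, so $\eta$ is natural for all morphisms of the orbit category and descends (after idempotent completion) to a $\otimes$-automorphism of $\omega$; this shows $\mathrm{Ker}(t^\ast)\subseteq\mathrm{Im}(\gamma^\ast)$ with no appeal to representation-theoretic exactness criteria. You instead invoke the Deligne--Milne/Esnault--Hai--Sun style criterion and verify its key input (c): that $\gamma(X)$ trivial forces $X\in\langle\cO\rangle$, via a concrete expansion of the isomorphism $\gamma(X_i)\simeq{\bf 1}$ into its graded components and the simplicity of the $\cO^{\otimes n}$. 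Your argument for (c) is sound (and the semi-simplicity of $\mathrm{D}$ and of $\mathrm{Idem}(\mathrm{D}/_{\!\!-\otimes\cO})$ makes the remaining conditions of the criterion collapse to your (a)--(c)), but it buys you an extra dependency: you must have the exactness criterion available both for fiber functors valued in $\mathrm{vect}(K)$ with $F\subsetneq K$ and, for part (ii), in the super-Tannakian setting, where it is less standard and should be cited explicitly. The paper's direct descent argument avoids that dependency entirely; your route, in exchange, yields the extra structural fact (c), which is of independent interest.
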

\begin{proof}
We address solely the case $F \subseteq K$; the proof of the other case is similar. We start by proving item (i). We claim first that the following affine group $K$-scheme $\mathrm{Aut}^\otimes(\omega \circ \gamma\circ t)$ is isomorphic to $\bbG_m$. Let $\mathrm{vect}_\bbZ(F)$ be the category of finite-dimensional $\bbZ$-graded $F$-vector spaces. Since $\Hom_{\mathrm{D}}(\cO^{\otimes i}, \cO^{\otimes j})\simeq \delta_{ij} \cdot F$, note that we have the following equivalence of categories:
\begin{eqnarray}\label{eq:equivalence-1}
\mathrm{vect}_\bbZ(F) \stackrel{\simeq}{\too} \langle \cO\rangle && \{V_n\}_{n \in \bbZ} \mapsto \bigoplus_{n \in \bbZ} (\cO^{\otimes n})^{\oplus (\mathrm{dim}\,V_n)}\,.
\end{eqnarray}
Moreover, since $(\gamma \circ t)(\cO)\simeq {\bf 1}$ in the category $\mathrm{Idem}(\mathrm{D}/_{\!\!-\otimes \cO})$, the fiber functor $\omega \circ \gamma \circ t \colon \langle \cO \rangle  \to \mathrm{vect}(K)$ corresponds to the following composition
\begin{equation}\label{eq:composition-last}
\mathrm{vect}_\bbZ(F) \stackrel{-\otimes_F K}{\too} \mathrm{vect}_\bbZ(K) \stackrel{\mathrm{forget}}{\too} \mathrm{vect}(K)\,.
\end{equation}
This implies our claim because it is well-known that the affine group $K$-scheme of $\otimes$-automorphisms of \eqref{eq:composition-last} is isomorphic to $\bbG_m$. 

We now claim that \eqref{eq:SES1} is a short exact sequence. Since $(\gamma \circ t)(\cO) \simeq {\bf 1}$ in the category $\mathrm{Idem}(\mathrm{D}/_{\!\!-\otimes \cO})$, the composition $t^\ast \circ \gamma^\ast$ is trivial. Moreover, since the functor $t$ is fully-faithful, the induced morphism $t^\ast$ is faithfully flat. Furthermore, since every object of $\mathrm{Idem}(\mathrm{D}/_{\!\!-\otimes \cO})$ is a direct summand of an object in the image of $\gamma$, the induced morphism $\gamma^\ast$ is a closed immersion. Therefore, it remains only to show that $\mathrm{Ker}(t^\ast) \subseteq \mathrm{Im}(\gamma^\ast)$. Let $\eta \in \mathrm{Ker}(t^\ast)$. Concretely, $\eta$ consists of a $\otimes$-automorphism of the fiber functor $\omega \circ \gamma$ such that $\eta_{t(\cO)} = \id$. By construction of the orbit category $\mathrm{D}/_{\!\!-\otimes \cO}$ (consult \S\ref{sub:orbit}), the idempotent completion of $\eta$ yields then a $\otimes$-automorphism of the fiber functor $\omega$. This shows that $\eta$ also belongs to $\mathrm{Im}(\gamma^\ast)$.  

We now prove item (ii). As above, we have the equivalence of categories \eqref{eq:equivalence-1}. Moreover, since $(\gamma \circ t)(\cO)\simeq {\bf 1}$ in the category $\mathrm{Idem}(\mathrm{D}/_{\!\!-\otimes \cO})$, the super-fiber functor 
\begin{equation}\label{eq:super-fiber-functor}
\omega \circ \gamma\circ t \colon \langle \cO \rangle  \too \mathrm{vect}_{\bbZ/2}(F)
\end{equation} takes values in the full subcategory $\mathrm{vect}(F) \subset \mathrm{vect}_{\bbZ/2}(F)$ of evenly supported $\bbZ/2$-graded $F$-vector spaces. Consequently, \eqref{eq:super-fiber-functor} corresponds to the forgetful functor $\mathrm{vect}_\bbZ(F) \to \mathrm{vect}(F)$. This implies that $\mathrm{Aut}^\otimes(\omega \circ \gamma\circ t)$ is isomorphic to the affine (super-)group $F$-scheme $\bbG_m$. Finally, the proof of the short exact sequence \eqref{eq:SES} is similar to the above proof of the short exact sequence \eqref{eq:SES1}.
\end{proof}
We now have all the ingredients necessary to prove Theorem \ref{thm:Galois}. We start by proving item (i). Consider the $F$-linear, abelian semi-simple, rigid symmetric monoidal category $\mathrm{D}:=\Num^\dagger(k)_F$ equipped with the $\otimes$-invertible object $\cO:=F(1)$. The smallest $F$-linear, abelian, rigid symmetric monoidal subcategory of $\Num^\dagger(k)_F$ containing the Tate motive $F(1)$ agrees with the category of Tate motives. Hence, by construction, we have $\Hom_{\Num^\dagger(k)_F}(F(1)^{\otimes i}, F(1)^{\otimes j})\simeq \delta_{ij} \cdot F$. Moreover, since the category $\NNum^\dagger(k)_F$ is abelian semi-simple and the functor $\Phi\colon \mathrm{Idem}(\Num^\dagger(k)_F/_{\!\!-\otimes F(1)}) \to \NNum^\dagger(k)_F$ is fully-faithful, the (idempotent complete) category $\mathrm{Idem}(\Num^\dagger(k)_F/_{\!\!-\otimes F(1)})$ is also semi-simple; see \cite[Lem.~2]{Jannsen}. Therefore, by applying the above Proposition \ref{prop:aux}(i) to the $F$-linear, faithful, symmetric monoidal functor (when $F\subseteq K$)
\begin{equation*}\label{eq:aux1}
\omega\colon \mathrm{Idem}(\Num^\dagger(k)_F/_{\!\!-\otimes F(1)}) \stackrel{\Phi}{\too} \NNum^\dagger(k)_F \stackrel{\eqref{eq:fiber1}}{\too} \mathrm{vect}(K)\,,
\end{equation*}
resp. to the $F$-linear, faithful, symmetric monoidal functor (when $K \subseteq F$)
\begin{equation*}\label{eq:aux2}
\omega\colon \mathrm{Idem}(\Num^\dagger(k)_F/_{\!\!-\otimes F(1)}) \stackrel{\Phi}{\too} \NNum^\dagger(k)_F \stackrel{\eqref{eq:fiber2}}{\too} \mathrm{vect}(F)\,,
\end{equation*}
we obtain the short exact sequence of affine group $K$-schemes, resp. $F$-schemes,
\begin{equation}\label{eq:1}
1 \too \mathrm{Gal}(\mathrm{Idem}(\Num^\dagger(k)_F/_{\!\!-\otimes F(1)})) \stackrel{\gamma^\ast}{\too} \mathrm{Gal}(\Num^\dagger(k)_F) \stackrel{t^\ast}{\too} \bbG_m \too 1\,.
\end{equation}
Using the fact that the functor $\Phi$ is fully-faithful, we have moreover the following faithfully flat morphism of affine group $K$-schemes, resp. $F$-schemes,
\begin{equation}\label{eq:2}
\mathrm{Gal}(\NNum^\dagger(k)_F) \stackrel{\Phi^\ast}{\too} \mathrm{Gal}(\mathrm{Idem}(\Num^\dagger(k)_F/_{\!\!-\otimes F(1)}))\,.
\end{equation} 
Hence, the proof of item (i) follows now from the combination of \eqref{eq:1} with \eqref{eq:2}.

The proof of item (ii) is similar to the above proof of item (i): replace the category $\Num^\dagger(k)_F$ by the category $\Num(k)_F$, the functors \eqref{eq:fiber1} and \eqref{eq:fiber2} by the functor \eqref{eq:super-fiber}, and Proposition \ref{prop:aux}(i) by Proposition \ref{prop:aux}(ii).

\begin{remark}
The analogue of Theorem \ref{thm:Galois}, with $k$ of characteristic zero, was proved in \cite[Thm.~1.7]{JEMS}. Therein, we made essential use of Deligne-Milne's theory of Tate-triples. In contrast, the above proof of Theorem \ref{thm:Galois} completely avoids the use of Deligne-Milne's theory of Tate-triples. Moreover, this simpler proof can also be applied to the case where $k$ is of characteristic zero.
\end{remark}
\section{Motivic measures}
In this section we assume that $k$ is perfect. Let us write $\mathrm{Var}(k)$ for the category of {\em varieties}, \ie reduced separated $k$-schemes of finite type. Recall that the {\em Grothendieck ring of varieties} $K_0\mathrm{Var}(k)$ is defined as the quotient of the free abelian group on the set of isomorphism classes of varieties $[X]$ by the relations $[X]=[Z] + [X\backslash Z]$, where $Z$ is a closed subvariety of $X$. Multiplication is induced by the product of varieties. Although very important, the structure of $K_0 \mathrm{Var}(k)$ remains poorly understood. In order to capture some of its flavor, some {\em motivic measures}, \ie ring homomorphisms $\mu\colon K_0 \mathrm{Var}(k) \to R$, have been built.
\begin{example}
When $k=\bbF_q$ is a finite field, the assignment $[X] \mapsto \#X(\bbF_q)$ gives rise to a motivic measure $\mu_\#\colon K_0 \mathrm{Var}(k) \to \bbZ$.
\end{example}
\begin{example}
The assignment $[X] \mapsto \sum_i (-1)^i \mathrm{dim}\, H^i_{\mathrm{rig}, \mathrm{c}}(X)$, where $H_{\mathrm{rig}, \mathrm{c}}^\ast(X)$ stands for Berthelot's compactly supported rigid cohomology (consult \cite{Berthelot-rigid}), gives rise to a motivic measure $\mu_{\mathrm{rig}}\colon K_0\mathrm{Var}(k) \to \bbZ$.
\end{example}
Let $K_0(\NChow(k)_\bbQ)$ be the Grothendieck ring of the $\bbQ$-linear, additive, rigid symmetric monoidal category of noncommutative Chow motives $\NChow(k)_\bbQ$.
\begin{proposition}\label{prop:existence}
There exists a motivic measure $\mu_{\mathrm{nc}}\colon K_0 \mathrm{Var}(k) \to K_0(\NChow(k)_\bbQ)$ such that $\mu_{\mathrm{nc}}([X])= [U(\perf_\dg(X))_\bbQ]$ for every smooth proper $k$-scheme $X$.
\end{proposition}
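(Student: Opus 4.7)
The goal is to extend the assignment $X \mapsto [U(\perf_\dg(X))_\bbQ]$, currently defined on smooth projective $k$-schemes, to a ring homomorphism $\mu_{\mathrm{nc}}\colon K_0\mathrm{Var}(k) \to K_0(\NChow(k)_\bbQ)$. My plan is to do this via a presentation of $K_0\mathrm{Var}(k)$ in terms of smooth projective varieties and blow-up relations, and then to verify that the target assignment respects these relations by invoking Orlov's semi-orthogonal decompositions together with the additivity of $U(-)_\bbQ$.

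First, I would appeal to Bittner's presentation: as an abelian group, $K_0\mathrm{Var}(k)$ is generated by classes $[X]$ of smooth projective $k$-schemes, subject to $[\emptyset]=0$ and the blow-up relations
\[ [\mathrm{Bl}_Y X] - [E] \;=\; [X] - [Y], \]
where $Y \hookrightarrow X$ is a smooth closed subvariety of a smooth projective $X$ and $E \hookrightarrow \mathrm{Bl}_Y X$ is the exceptional divisor. The multiplication on $K_0\mathrm{Var}(k)$ then descends from the direct product of varieties.

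Second, I would verify the blow-up relation at the level of noncommutative Chow motives. Orlov's blow-up formula yields a semi-orthogonal decomposition
\[ \perf_\dg(\mathrm{Bl}_Y X) \;=\; \langle \perf_\dg(X),\, \perf_\dg(Y),\, \ldots,\, \perf_\dg(Y)\rangle \]
with $c-1$ copies of $\perf_\dg(Y)$ (where $c = \mathrm{codim}_X(Y)$). Since $U(-)_\bbQ$ is an additive invariant, this yields
\[ [U(\perf_\dg(\mathrm{Bl}_Y X))_\bbQ] \;=\; [U(\perf_\dg(X))_\bbQ] + (c-1)\,[U(\perf_\dg(Y))_\bbQ]. \]
Applying the same formula to the projective bundle $E = \bbP(N_{Y/X}) \to Y$ (Beilinson's decomposition of $\perf_\dg(\bbP^{c-1})$ pulled back along $Y$) gives
\[ [U(\perf_\dg(E))_\bbQ] \;=\; c\,[U(\perf_\dg(Y))_\bbQ]. \]
Subtracting the two identities recovers exactly the Bittner relation. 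Multiplicativity then follows from the Morita equivalence $\perf_\dg(X\times Z)\simeq \perf_\dg(X)\otimes \perf_\dg(Z)$ for smooth projective $X$ and $Z$, which identifies $\mu_{\mathrm{nc}}([X]\cdot [Z])$ with $\mu_{\mathrm{nc}}([X])\cdot \mu_{\mathrm{nc}}([Z])$ in $K_0(\NChow(k)_\bbQ)$.

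The main obstacle is that Bittner's presentation is known unconditionally only when $k$ admits resolution of singularities, hence in general not in our positive characteristic setting. I would address this either by working with de Jong's alterations and the Guillén--Navarro Aznar cubical hyperresolution machinery (tensoring by $\bbQ$ where needed to kill the degrees of alterations, which is harmless since the target is already $\bbQ$-linear at the level of morphisms), or by invoking the analogous presentation established by Bondarko via his theory of weight structures, which is valid for arbitrary perfect base fields. Either route reduces the problem to the blow-up computation carried out in the second step.
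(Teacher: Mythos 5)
Your verification of the blow-up relation is correct as far as it goes: Orlov's decomposition gives $[U(\perf_\dg(\mathrm{Bl}_Y X))_\bbQ]=[U(\perf_\dg(X))_\bbQ]+(c-1)[U(\perf_\dg(Y))_\bbQ]$, the projective bundle formula gives $[U(\perf_\dg(E))_\bbQ]=c\,[U(\perf_\dg(Y))_\bbQ]$, and multiplicativity follows from $\perf_\dg(X\times Z)\simeq\perf_\dg(X)\otimes\perf_\dg(Z)$. In characteristic zero this would be a complete (and standard) proof. The genuine gap is exactly the step you flag as ``the main obstacle'' and then do not actually close: the paper works over a field of positive characteristic, where Bittner's presentation of $K_0\mathrm{Var}(k)$ is unavailable (it needs both resolution of singularities and weak factorization), and neither of your proposed substitutes supplies a presentation. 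De Jong's alterations are generically finite rather than birational, so they do not produce blow-up-type relations, and the Guill\'en--Navarro Aznar cubical hyperresolution machinery is itself a characteristic-zero tool resting on Hironaka; tensoring the \emph{target} with $\bbQ$ does nothing to repair a missing presentation of the \emph{source}, which must remain the integral ring $K_0\mathrm{Var}(k)$. Likewise, Bondarko's weight-structure results do not give a presentation of $K_0\mathrm{Var}(k)$ by smooth projective generators; what they give is a ring isomorphism $K_0(\Chow(k)_\bbQ)\simeq K_0(\mathrm{DM}_{\mathrm{gm}}(k)_\bbQ)$, which by itself does not tell you how to evaluate $\mu_{\mathrm{nc}}$ on a singular or open variety.

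The paper sidesteps the presentation problem entirely. It uses Kelly's symmetric monoidal compactly-supported motive functor $M^c(-)_\bbQ\colon\mathrm{Var}(k)^p\to\mathrm{DM}_{\mathrm{gm}}(k)_\bbQ$, whose localization triangles $M^c(Z)_\bbQ\to M^c(X)_\bbQ\to M^c(X\backslash Z)_\bbQ\to M^c(Z)_\bbQ[1]$ hold for \emph{arbitrary} varieties in positive characteristic (this is where alterations are used, inside Kelly's work, with $\bbQ$-coefficients on the motivic side rather than on $K_0\mathrm{Var}(k)$). These triangles immediately yield a motivic measure $K_0\mathrm{Var}(k)\to K_0(\mathrm{DM}_{\mathrm{gm}}(k)_\bbQ)$, $[X]\mapsto[M^c(X)_\bbQ]$, with no presentation of the source required. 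One then applies Bondarko's isomorphism $K_0(\mathrm{DM}_{\mathrm{gm}}(k)_\bbQ)\simeq K_0(\Chow(k)_\bbQ)$ and composes with $\Phi\circ\gamma$ into $K_0(\NChow(k)_\bbQ)$; the compatibility $\Psi(\mathfrak{h}(X))\simeq M^c(X)_\bbQ$ for smooth proper $X$ gives the normalization $\mu_{\mathrm{nc}}([X])=[U(\perf_\dg(X))_\bbQ]$. To repair your argument you would have to replace the Bittner step by essentially this route, at which point the semi-orthogonal decomposition computations become unnecessary.
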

\begin{proof}
Let $\mathrm{Var}(k)^p$ be the category whose objects are the varieties and whose morphisms are the proper maps. Following \cite[Lem.~5.5.6 and Prop.~ 5.5.8]{Kelly}, there exists a symmetric monoidal functor $M^c(-)_\bbQ \colon \mathrm{Var}(k)^p \to \mathrm{DM}_{\mathrm{gm}}(k)_\bbQ$ with values in Voevodsky's triangulated category of geometric motives. Moreover, following \cite[Prop.~5.5.5]{Kelly}, given a variety $X$ and a closed subvariety $Z \subset X$, we have the following distinguished triangle in $\mathrm{DM}_{\mathrm{gm}}(k)_\bbQ$:
$$ M^c(Z)_\bbQ \too M^c(X)_\bbQ \too M^c(X\backslash Z)_\bbQ \too M^c(Z)_\bbQ[1]\,.$$
 Consequently, we obtain the following motivic measure:
\begin{eqnarray}\label{eq:ring1}
K_0 \mathrm{Var}(k) \too K_0(\mathrm{DM}_{\mathrm{gm}}(k)_\bbQ) && [X] \mapsto [M^c(X)_\bbQ]\,.
\end{eqnarray} 
Recall from the proof of Corollary \ref{cor:main} that the category of Chow motives comes equipped with a (contravariant) functor $\mathfrak{h}(-)_\bbQ\colon \mathrm{SmProp}(k)^\op \to \Chow(k)_\bbQ$ defined on smooth proper $k$-schemes. As proved in \cite[Prop.~2.1.4]{Voevodsky}, there exists a $\bbQ$-linear, fully-faithful, symmetric monoidal functor $\Psi\colon \Chow(k)_\bbQ \to \mathrm{DM}_{\mathrm{gm}}(k)_\bbQ$ such that $\Psi(\mathfrak{h}(X)) \simeq M^c(X)_\bbQ$ for every smooth proper $k$-scheme $X$. Moreover, thanks to \cite[Prop.~2.3.3]{Bondarko}, $\Psi$ induces a ring isomorphism $K_0(\Chow(k)_\bbQ)\simeq K_0(\mathrm{DM}_{\mathrm{gm}}(k)_\bbQ)$. Hence, by combining the above considerations with the commutative diagram \eqref{eq:bridge} (with $F=\bbQ$), we obtain the following motivic measure:
$$ \mu_{\mathrm{nc}}\colon K_0\mathrm{Var}(k) \stackrel{\eqref{eq:ring1}}{\too} K_0(\mathrm{DM}_{\mathrm{gm}}(k)_\bbQ) \simeq K_0(\Chow(k)_\bbQ) \stackrel{\Phi \circ \gamma}{\too} K_0(\NChow(k)_\bbQ)\,.$$
Finally, note that, by construction, we have $\mu_{\mathrm{nc}}([X])= [U(\perf_\dg(X))_\bbQ]$ for every smooth proper $k$-scheme $X$. This finishes the proof.
\end{proof}
Our fifth main result is the following:
\begin{theorem}\label{thm:measure}
The following holds:
\begin{itemize}
\item[(i)] The motivic measure $\mu_\#$ does {\em not} factors through $\mu_{\mathrm{nc}}$.
\item[(ii)] The motivic measure $\mu_{\mathrm{rig}}$ factors through $\mu_{\mathrm{nc}}$. 
\end{itemize}
\end{theorem}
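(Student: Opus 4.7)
The plan is to handle the two parts separately: (i) by producing a semi-orthogonal collapse that kills information detected by $\mu_\#$, and (ii) by constructing a ring homomorphism out of $K_0(\NChow(k)_\bbQ)$ from the $TP$-realization of Theorem \ref{thm:TP}.

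For (i), I would exploit the semi-orthogonal decomposition $\perf(\bbP^1)=\langle \cO, \cO(1)\rangle$. Via the additivity property of the universal additive invariant $U(-)_\bbQ$, this yields $U(\perf_\dg(\bbP^1))_\bbQ \simeq U(k)_\bbQ \oplus U(k)_\bbQ$ in $\NChow(k)_\bbQ$, whence $\mu_{\mathrm{nc}}([\bbP^1]) = 2\,\mu_{\mathrm{nc}}([\Spec k])$ in $K_0(\NChow(k)_\bbQ)$. Combining with the excision identity $[\bbA^1]=[\bbP^1]-[\Spec k]$ in $K_0\mathrm{Var}(k)$ gives $\mu_{\mathrm{nc}}([\bbA^1])=\mu_{\mathrm{nc}}([\Spec k])$, while $\mu_\#([\bbA^1])=q\neq 1=\mu_\#([\Spec k])$. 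Any factorization $\mu_\#=\phi\circ\mu_{\mathrm{nc}}$ would therefore force $q=1$, a contradiction.

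For (ii), I would define the signed dimension
\[
\chi_{TP}\colon K_0(\NChow(k)_\bbQ)\too \bbZ, \quad [N\!\!M]\mapsto \dim_K TP_+(N\!\!M)_{1/p}-\dim_K TP_-(N\!\!M)_{1/p}\,.
\]
This is a ring homomorphism because \eqref{eq:TP1} (with $F=\bbQ$) is $\bbQ$-linear symmetric monoidal and the signed dimension descends to a ring map $K_0(\mathrm{vect}_{\bbZ/2}(K))\to\bbZ$. For a smooth proper $k$-scheme $X$, Scholze's isomorphism \eqref{eq:Scholze}, together with Berthelot's comparison $H^i_{\mathrm{rig}}(X)\simeq H^i_{\mathrm{crys}}(X/W(k))\otimes_{W(k)}K$ and the equality $H^i_{\mathrm{rig},c}(X)=H^i_{\mathrm{rig}}(X)$ for $X$ proper, yields
\[
\chi_{TP}(\mu_{\mathrm{nc}}([X]))=\sum_i(-1)^i\dim H^i_{\mathrm{crys}}(X)=\sum_i(-1)^i\dim H^i_{\mathrm{rig},c}(X)=\mu_{\mathrm{rig}}([X])\,.
\]

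To upgrade this equality from smooth proper classes to all of $K_0\mathrm{Var}(k)$, I would use the factorization of $\mu_{\mathrm{nc}}$ constructed in Proposition \ref{prop:existence} through the ring isomorphism $K_0(\mathrm{DM}_{\mathrm{gm}}(k)_\bbQ)\simeq K_0(\Chow(k)_\bbQ)$. Since every class in $K_0(\Chow(k)_\bbQ)$ is a $\bbZ$-linear combination of classes $[\mathfrak{h}(X)_\bbQ]$ with $X$ smooth proper, any ring homomorphism out of $K_0(\Chow(k)_\bbQ)$ is determined by its values on such generators. A parallel factorization of $\mu_{\mathrm{rig}}$ through $K_0(\mathrm{DM}_{\mathrm{gm}}(k)_\bbQ)$ comes from the rigid-cohomology realization $M^c(X)_\bbQ\mapsto R\Gamma_{\mathrm{rig},c}(X)$, whose existence rests on the additivity of $H^*_{\mathrm{rig},c}$ for closed/open decompositions of varieties. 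The two induced ring maps $K_0(\Chow(k)_\bbQ)\to\bbZ$ agree on the generators $[\mathfrak{h}(X)_\bbQ]$ by the displayed equation, hence coincide, giving $\chi_{TP}\circ\mu_{\mathrm{nc}}=\mu_{\mathrm{rig}}$.

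The main obstacle is the factorization of $\mu_{\mathrm{rig}}$ through $K_0(\Chow(k)_\bbQ)$: in characteristic zero this would follow at once from resolution of singularities and Bittner's presentation of $K_0\mathrm{Var}(k)$, but in positive characteristic one must instead invoke Bondarko's weight structure on $\mathrm{DM}_{\mathrm{gm}}(k)_\bbQ$ (already used in Proposition \ref{prop:existence}) together with the rigid realization of Voevodsky motives to produce the required ring map.
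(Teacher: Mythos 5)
Your proposal is correct and follows essentially the same route as the paper: item (i) rests on Beilinson's exceptional collection collapsing $U(\perf_\dg(\bbP^n))_\bbQ$ to $(n{+}1)$ copies of the unit (the paper takes $n>1$ and compares $n+1$ with $1+q+\cdots+q^n$ directly, whereas you take $n=1$ and pass to $\bbA^1$ via the scissor relation — a cosmetic difference), and item (ii) is exactly the paper's signed-dimension homomorphism $K_0(\NChow(k)_\bbQ)\to K_0(\mathrm{vect}_{\bbZ/2}(K))\simeq\bbZ[\epsilon]/(\epsilon^2{=}1)\to\bbZ$ combined with Scholze's comparison \eqref{eq:Scholze} and the reduction to smooth proper $X$ through $K_0(\mathrm{DM}_{\mathrm{gm}}(k)_\bbQ)\simeq K_0(\Chow(k)_\bbQ)$. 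Your closing remark about needing the rigid realization and Bondarko's weight structure to justify the factorization of $\mu_{\mathrm{rig}}$ is, if anything, slightly more careful than the paper, which simply asserts that $\mu_{\mathrm{rig}}$ factors through the motivic measure \eqref{eq:ring1}.
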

Roughly speaking, Theorem \ref{thm:measure} shows us that in contrast with the ``counting-of-points'' procedure, the Euler characteristic of Berthelot's rigid cohomology can still be recovered from the theory of noncommutative Chow motives.
\begin{proof}
We start by proving item (i). Let $\bbP^n$ be the $n^{\mathrm{th}}$ projective space for some integer $n>1$. On the one hand, following \cite{Beilinson}, the category $\perf(\bbP^n)$ admits a full exceptional collection of length $n+1$. As explained in the proof of Theorem \ref{thm:TP}, this implies that $U(\perf_\dg(\bbP^n))_\bbQ\simeq U(k)_\bbQ^{\oplus n+1}$. Consequently, $[U(\perf_\dg(\bbP^n))_\bbQ]=n+1$ in the Grothendieck group $K_0(\NChow(k)_\bbQ)$. On the other hand, since $k=\bbF_q$, we have $\# \bbP^n(\bbF_q)=1+q +q^2 + \cdots + q^n$. Since $n+1 \neq 1 + q + q^2 + \cdots + q^n$, this shows that the motivic measure $\mu_\#$ does {\em not} factors through $\mu_{\mathrm{nc}}$.

We now prove item (ii). Note that since the functor \eqref{eq:TP1} (with $F=\bbQ$) is $\bbQ$-linear and symmetric monoidal, it induces a ring homomorphism:
\begin{equation}\label{eq:induced1}
TP_\pm(-)_{1/p}\colon K_0(\NChow(k)_\bbQ) \too K_0(\mathrm{vect}_{\bbZ/2}(K)) \,.
\end{equation}
Moreover, we have the following computation:
\begin{eqnarray*}\label{eq:iso}
K_0(\mathrm{vect}_{\bbZ/2}(K))\simeq \bbZ[\epsilon]/(\epsilon^2=1) && [(V_+, V_-)]\mapsto \mathrm{dim}\, V_+ + \mathrm{dim}\, V_- \cdot \epsilon\,.
\end{eqnarray*}
Therefore, we can consider the following composition:
\begin{equation}\label{eq:induced2}
K_0(\NChow(k)_\bbQ) \stackrel{\eqref{eq:induced1}}{\too} \bbZ[\epsilon]/(\epsilon^2=1) \stackrel{\epsilon\mapsto -1}{\too} \bbZ\,.
\end{equation}
We claim that the motivic measure $\mu_{\mathrm{rig}}$ agrees with the composition of $\mu_{\mathrm{nc}}$ with \eqref{eq:induced2}. Since $\mu_{\mathrm{rig}}$ factors through the motivic measure \eqref{eq:ring1} and, as explained in the proof of Proposition \ref{prop:existence}, we have an isomorphism $K_0(\Chow(k)_\bbQ)\simeq K_0(\mathrm{DM}_{\mathrm{gm}}(k)_\bbQ)$, it suffices to consider the case where $X$ is a smooth proper $k$-scheme. On the one hand, since $X$ is smooth and proper, we have 
\begin{equation}\label{eq:equalities}
\mu_{\mathrm{rig}}([X]):= \sum_i (-1)^i \mathrm{dim}\, H^i_{\mathrm{rig}, \mathrm{c}}(X) = \sum_i (-1)^i \mathrm{dim}\, H^i_{\mathrm{crys}}(X)\,,
\end{equation}
where $H^\ast_{\mathrm{crys}}(X)$ stands for crystalline cohomology. On the other hand, thanks to the natural isomorphism \eqref{eq:Scholze}, the image of $[U(\perf_\dg(X))_\bbQ]$ under the above composition \eqref{eq:induced2} also agrees with $\sum_i (-1)^i \mathrm{dim}\, H^i_{\mathrm{crys}}(X)$.
\end{proof}
\begin{remark}[Loss of information]
Intuitively speaking, the above proof of item (i) shows us that in the particular case of a finite field $k=\bbF_q$ the passage from the commutative world (where $\# \bbP^n(\bbF_q)= 1 + q + q^2 + \cdots + q^n$) to the noncommutative world (where $[U(\perf_\dg(\bbP^n))_\bbQ]=n+1$) may be understood as setting $q=1$. However, note that if we keep track of the Frobenius endomorphism (as done in \S\ref{app:1}-\S\ref{sec:app-2}), then there is no loss~of information when passing from the commutative world~to~the~noncommutative~world.
\end{remark}

\medbreak\noindent\textbf{Acknowledgments:} The author is grateful to Lars Hesselholt for useful discussions concerning topological periodic cyclic homology and for comments on a previous version of Corollaries \ref{cor:holomorphic} and \ref{cor:II}. The author is also grateful to Maxim Kontsevich for comments on a previous version of Theorem \ref{thm:zeta} and for pointing out Remark \ref{rk:zero}. The author also would like to thank the anonymous referee for his/her comments, and to the Mittag-Leffler Institute and to the Hausdorff Research Institute for Mathematics for their hospitality.

\end{document}

\end{proof}